\renewcommand{\ge}{\geqslant}
\newcommand{\N}{\ensuremath{\mathbb{N}}}
\newcommand{\F}{\ensuremath{\mathbb{F}}}
\newcommand{\R}{\ensuremath{\mathbb{R}}}
\newcommand{\C}{\ensuremath{\mathbb{C}}}
\newcommand{\PP}{\ensuremath{\mathbb{P}}}
\newcommand{\FF}{\ensuremath{\mathbb{F}}}
\DeclareMathOperator{\Aut}{Aut}
\DeclareMathOperator{\Bir}{Bir}
\DeclareMathOperator{\GL}{GL}
\DeclareMathOperator{\Bp}{sBp}
\DeclareMathOperator{\TAut}{TAut}
\DeclareMathOperator{\Id}{Id}
\DeclareMathOperator{\SL}{SL}
\def\dashmapsto{\mapstochar\dashrightarrow}
\newtheorem{Thm}{Theorem}[section]
\newtheorem*{ThmA*}{Theorem A}
\newtheorem*{ThmB*}{Theorem B}
\newtheorem{Thm*}{Theorem} 
\newtheorem{Cor}[Thm]{Corollary}
\newtheorem{Lem}[Thm]{Lemma}
\newtheorem{Prop}[Thm]{Proposition}
\theoremstyle{definition}
\newtheorem{Def}[Thm]{Definition}
\newtheorem{Rmk}[Thm]{Remark}
\newtheorem{Ex}[Thm]{Example}
\title{The Cremona group of the plane is compactly presented}
\author{Susanna Zimmermann}
\subjclass[2010]{14E07; 20F05; 14L30}
\thanks{The author gratefully acknowledges support by the Swiss National Science Foundation Grant ``Birational geometry'' PP00P2\_153026 /1}
\begin{document}

\maketitle
\thispagestyle{empty}

\setcounter{tocdepth}{1}
\begin{abstract}
This article shows that the Cremona group of the plane is compactly presented. To do this we prove that it is a generalised amalgamated product of three of its algebraic subgroups (automorphisms of the plane and Hirzebruch surfaces) divided by one relation.
\end{abstract}

\tableofcontents

\section{Introduction}
Let $k$ be a field. The Cremona group $\Bir(\PP^n)$ is the group of birational transformations of the projective space $\PP^n_k=\PP^n$. It corresponds to a very intensively studied topic in algebraic geometry (see \cite{Ser08,Des,Cant13} and references therein.)

A birational transformation of $\PP^n$ is simply a birational change of coordinates, so $\Bir(\PP^n)$ is a natural generalisation of $\Aut(\PP^n)= \mathrm{PGL}_{n+1}(k)$ and in many aspects the Cremona group behaves like semi-simple groups, but also in many aspects it does not. Some analogies between the Cremona groups and semi-simple groups have been presented by J.-P. Serre in the 1000th Bourbaki seminar \cite{Ser08}, and by S.~Cantat in \cite{Cant11}. 

Endowed with the Euclidean topology, constructed in \cite{BF13}, the Cremona group becomes a Hausdorff topological group (for $k$ a local field). For $k=\C$ and $k=\R$ the restriction to its subgroup $\mathrm{PGL}_{n+1}(k)$ of linear coordinate changes of $\PP^n$ is the Euclidean topology. This not only opens the path to study the geometric properties of the Cremona group coming from the Euclidean topology but also presents the opportunity to study the Cremona group from the point of view of geometric group theory and rises the question of analogies to Lie groups. 

In this article we will present one of these analogies, namely the property of being compactly presented (see Definition~\ref{def cp}). 

Lets take a closer look at the Cremona group endowed with the Euclidean topology:

The group $\Bir(\PP^1_{\C})=\mathrm{PGL}_2(\C)$ is compactly presented by any neighbourhood of $1$ because it is a connected complex algebraic group (see for example \cite[Satz~3.1]{Ab}). 

For $n\geq 2$ and $k$ any local field, the group $\Bir(\PP^n_k)$ is not locally compact \cite[Lemma~5.15]{BF13}, though the topology is the inductive topology given by the family of closed sets $\Bir(\PP^n)_{\leq d}=\{f\in\Bir(\PP^n)\mid \deg(f)\leq d\}$, which are locally compact \cite[Proposition~2.10, Lemma~5.4]{BF13}. Furthermore, any compact subset of $\Bir(\PP^n)$ is of bounded degree.

For $n\geq 3$, the group $\Bir(\PP^n_{\C})$ is not compactly generated \cite[Lemma~5.17]{BF13}, hence not compactly presented.

The group $\Bir(\PP^2_{\C})$ is generated by $\Aut(\PP^2_{\C})=\mathrm{PGL}_2(\C)$ and the standard quadratic transformation $\sigma: [x:y:z]\dashmapsto[yz:xz:xy]$ \cite{Cas}. Its  subgroup $\Aut(\PP^2_{\C})$, being a connected complex algebraic group, is compactly presented by any neighourhood of $1$. Hence $\Bir(\PP^2_{\C})$ is compactly generated by any compact neighbourhood of 1 in $\Aut(\PP^2_{\C})$ and $\sigma$.

The aim of this article is to show that, even though it is neither an algebraic group nor locally compact, $\Bir(\PP^2_{\C})$ is moreover compactly presented: 

\begin{ThmA*}[(Corollary~\ref{cor cpt pres 2})]\label{Thm A}
Endowed with the Euclidean topology the Cremona group $\Bir(\PP^2_{\C})$ is compactly presented by $\{\sigma\}\cup K$ where $K$ is any compact neighbourhood of $1$ in $\Aut(\PP^2_{\C})$ and $\sigma:[x:y:z]\dashmapsto[yz:xz:xy]$ is the standard involution of $\PP^2_{\C}$.
\end{ThmA*}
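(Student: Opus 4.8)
The plan is to establish Theorem A as a consequence of a stronger structural result: that $\Bir(\PP^2_{\C})$ is isomorphic to a generalised amalgamated product of three of its algebraic subgroups --- namely $\Aut(\PP^2_{\C}) = \mathrm{PGL}_3(\C)$, $\Aut(\FF_1)$, and $\Aut(\FF_0)=\Aut(\PP^1\times\PP^1)$ --- modulo a single relation. Since each of these three groups is a connected complex algebraic group, each is compactly presented by any neighbourhood of $1$. The strategy exploits the general principle from geometric group theory that a group which is an amalgam of finitely many compactly presented groups over a common compactly presented subgroup, with finitely many additional relations, is itself compactly presented by the union of the generating neighbourhoods together with the (compact) supports of the extra relators. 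Thus the bulk of the work is algebraic: to identify the correct amalgam presentation, and then topological bookkeeping ensures the compact presentation follows.

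First I would recall the classical presentation of $\Bir(\PP^2)$ by generators and relations. There are several candidates in the literature (Gizatullin, Iskovskikh, and more recently Blanc, Cornulier--Cantat) expressing $\Bir(\PP^2_{\C})$ via relations among $\mathrm{PGL}_3(\C)$, the Jonquières group $\Bir(\PP^2)_{\mathrm{Jon}}$ (de Jonquières transformations preserving a pencil of lines), and the quadratic map $\sigma$. The key reduction step is to reorganise such a presentation into the form of a generalised amalgam over the three surface-automorphism groups: the point is that $\Aut(\FF_1)$ contains both $\sigma$-conjugates of Jonquières elements and maps naturally to $\PP^2$ via the contraction $\FF_1 \to \PP^2$, while $\Aut(\FF_0)$ and $\Aut(\FF_1)$ are linked by the elementary links between Hirzebruch surfaces. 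I would carefully extract the relations: the amalgamated pieces glue along intersections of these algebraic subgroups (e.g.\ $\Aut(\FF_0)\cap\Aut(\FF_1)$ realised inside $\Bir(\PP^2)$ as the common Jonquières-type subgroup), and then I expect exactly one further relation is needed --- morally the relation encoding that the two ways of obtaining $\sigma$ (or a specific cubic/quadratic involution) from compositions of links agree. This should be extracted from the Blanc-type relations by a diagram-chase on elementary links, discarding the ones already implied by the amalgam structure.

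Then I would invoke the topological-group version of the relevant combinatorial lemma: if $G = \big(\ast_{i} A_i\big)/\langle\!\langle R_0, r\rangle\!\rangle$ where the $A_i$ are topological groups each compactly presented by a compact symmetric neighbourhood $K_i$ of $1$, the relator set $R_0$ identifies common compactly presented subgroups, and $r$ is a single additional relation whose letters lie in $\bigcup_i K_i$ (up to enlarging the $K_i$), then $G$ is compactly presented by $\bigcup_i K_i$. Applied here with $A_1 = \mathrm{PGL}_3(\C)$, $A_2 = \Aut(\FF_1)$, $A_3 = \Aut(\FF_0)$, and noting $\sigma$ lies in (a conjugate of) $\Aut(\FF_1)$ while $\mathrm{PGL}_3(\C)$ surjects onto the generators of $\Bir(\PP^2)$ that are not covered otherwise, one obtains compact presentation by $K \cup \{\sigma\}$ for $K$ any compact neighbourhood of $1$ in $\mathrm{PGL}_3(\C)$ --- absorbing the compact neighbourhoods in $\Aut(\FF_i)$ into the group generated by $K$ and $\sigma$ (this last absorption uses that $\Aut(\FF_1)$ and $\Aut(\FF_0)$ are each generated by a compact neighbourhood of $1$ together with elements expressible via $\mathrm{PGL}_3$ and $\sigma$, which is where the Euclidean topology and connectedness of these algebraic groups enters). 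The main obstacle is the algebraic heart of the argument: proving that $\Bir(\PP^2_{\C})$ admits a presentation as an amalgam over exactly these three subgroups with only one surviving relation, i.e.\ showing that all classical Cremona relations are consequences of the relations internal to $\mathrm{PGL}_3(\C)$, $\Aut(\FF_1)$, $\Aut(\FF_0)$, their pairwise intersections, and one further relation --- this requires a delicate analysis of the Sarkisov program / decomposition into elementary links for surfaces, and controlling redundancy among the link relations.
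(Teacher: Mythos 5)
Your overall architecture is the same as the paper's: realise $\Bir(\PP^2_{\C})$ as a generalised amalgamated product of three algebraic subgroups of $\Bir(\PP^2)$ modulo a single extra relation, apply a combinatorial lemma saying that such an amalgam of compactly presented groups is compactly presented by the union of their generating neighbourhoods, and then absorb the neighbourhoods of $1$ in the two Hirzebruch factors into a bounded power of $K\cup\{\sigma\}$. However, there is a concrete error in your choice of the third factor: you take $\Aut(\FF_1)$, but $\FF_1$ is the blow-up of $\PP^2$ at one point, so every automorphism of $\FF_1$ preserves the unique $(-1)$-curve and descends to a linear automorphism of $\PP^2$ fixing that point. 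Hence, as a subgroup of $\Bir(\PP^2)$, $\Aut(\FF_1)$ is contained in $\Aut(\PP^2)$ and contributes nothing new; in particular your claims that $\sigma$ lies in a conjugate of $\Aut(\FF_1)$ and that $\Aut(\FF_0)\cap\Aut(\FF_1)$ is a ``common Jonqui\`eres-type subgroup'' are false, since $\Aut(\FF_1)$ contains no quadratic map. The correct third factor is $\Aut(\FF_2)$, embedded via the birational map $\PP^2\dashrightarrow\FF_2$ blowing up $[1:0:0]$ and an infinitely near point: this group contains the quadratic involutions $\sigma_1,\sigma_2$ with infinitely near base-points, which are indispensable for decomposing de Jonqui\`eres maps whose base-points collide. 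With $\FF_1$ in place of $\FF_2$ your amalgam degenerates to $\Aut(\PP^2)\ast\Aut(\FF_0)$ over the intersection, and the induction on degree that underlies the amalgam theorem (simplifying words of length three whose image has degree $\le 3$) breaks down for quadratic maps with infinitely near base-points.

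Two further points. First, the ``algebraic heart'' you defer --- establishing the amalgam presentation with exactly one surviving relation --- is indeed where essentially all the work of the paper lies (it occupies Sections 3 and 4, via a careful study of linear systems and an induction on the pair (maximal degree, position of maximum), and rests on Blanc's presentation $\Bir(\PP^2)\cong(\Aut(\PP^2)\ast_{\Aut(\PP^2)\cap J}J)/\langle\tau_{12}\sigma_3\tau_{12}\sigma_3\rangle$ rather than on a Sarkisov/link analysis); so as it stands your text is a strategy outline, not a proof. Second, the absorption step needs more than generation: to conclude that a compact neighbourhood of $1$ in $\Aut(\FF_i)$ lies in some fixed power $(K\cup\{\sigma\})^N$ one uses that $\Aut(\FF_i)$ is locally compact, hence a Baire space, so that some power of a compact generating set has non-empty interior (the paper does this on the Zariski-open cells $\mathcal{A}_0\sigma_3\mathcal{A}_0$ and $\mathcal{A}_2\sigma_2\mathcal{A}_2$). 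Also note that $\Aut(\FF_0)$ is not connected --- it has two components --- so it is compactly presented by a neighbourhood of $1$ together with the switch $\tau_{12}$, a small correction to your claim that all three factors are connected.
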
 

For algebraically closed fields, the generating sets and generating relations of $\Bir(\PP^2)$ have been studied throughoutly: The famous Noether-Castelnuovo theorem \cite{Cas} states that if $k$ is algebraically closed then $\Bir(\PP^2)$ is generated by $\Aut(\PP^2)$ and the standard quadratic involution $\sigma:[x:y:z]\dashmapsto[yz:xz:xy]$, i.e. the generating set is the union of two complex linear algebraic groups. 

A presentation was given in \cite{Giz} where the generating set consists of all quadratic transformations of $\PP^2$ and the generating relations are of the form $q_1q_2q_3=1$ where $q_i$ are quadratic transformations. Another presentation was given in \cite{Isk} where it is shown that $\Bir(\PP^1\times\PP^1)$ (isomorphic to $\Bir(\PP^2)$) is the amalgamated product of $\Aut(\PP^1\times\PP^1)$ and the de Jonquières group of birational maps of $\PP^1\times\PP^1$ preserving the first projection along their intersection modulo one relation. In \cite{Bla12} a similar result is presented; the group $\Bir(\PP^2)$ is the amalgamated product of $\Aut(\PP^2)$ and the de Jonquières group $J_{[1:0:0]}$ of birational maps of $\PP^2$ preserving the pencil of lines through $[1:0:0]$ along their intersection modulo one relation. 

Since neither $\Aut(\PP^2)=\mathrm{PGL}_3(k)$ nor the set of quadratic transformations nor the de Jonquières group are compact in the Euclidean topology, these presentations yield no compact presentation but at least the latter two yield a bounded presentation (the length of the generating relations are universally bounded). 

In \cite{W}, using \cite{Isk}, a presentation of $\Bir(\PP^2)$ is given by the generalised amalgamated product of $\Aut(\PP^2)$, $\Aut(\PP^1\times\PP^1)$ and $J_{[0:1:0]}$ (as subgroups of $\Bir(\PP^2)$) along their pairwise intersection, where $\Aut(\PP^1\times\PP^1)$ is viewed as a subgroup of $\Bir(\PP^2)$ via a birational map $\PP^2\dashrightarrow\PP^1\times\PP^1$ given by the pencils of lines through $[0:1:0]$ and $[1:0:0]$. Again, since $J_{[0:1:0]}$ is not compact, this does not yield a compact but only a bounded presentation but it gives rise to the following idea, which is the key step in the proof of Theorem A: 

\begin{ThmB*}[(Theorem~\ref{amalg})]\label{Thm B}
Let $k$ be algebraically closed. Then the Cremona group $\Bir(\PP^2)$ is isomorphic to the amalgamated product of $\Aut(\PP^2)$, $\Aut(\FF_2)$, $\Aut(\PP^1\times\PP^1)$ $($as subgroups of $\Bir(\PP^2)$$)$ along their pairwise intersection in $\Bir(\PP^2)$ modulo the relation $\tau_{13}\sigma\tau_{13}\sigma$, where $\tau_{13}\in\Aut(\PP^2)$ is given by $\tau_{13}\colon[x:y:z]\mapsto[z:y:x]$. 
\end{ThmB*}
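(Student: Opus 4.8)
The plan is to start from a known presentation of $\Bir(\PP^2)$ as an amalgamated product and then perform a Tietze-style transformation to replace one of the non-compact factors by the two algebraic groups $\Aut(\FF_2)$ and $\Aut(\PP^1\times\PP^1)$. Concretely, I would take as input the presentation from \cite{Bla12} (or equivalently \cite{Isk}): $\Bir(\PP^2)$ is the amalgamated product of $\Aut(\PP^2)$ and the de Jonqui\`eres group $J_{[1:0:0]}$ along their intersection, modulo one explicit relation. The first step is to understand $J_{[1:0:0]}$ from the point of view of the conic bundle $\FF_1\to\PP^1$ (or $\PP^1\times\PP^1\to\PP^1$): a de Jonqui\`eres map preserving the pencil of lines through $p=[1:0:0]$ lifts, after blowing up $p$, to a birational self-map of a Hirzebruch surface preserving the ruling. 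Using elementary links (elementary transformations of ruled surfaces) one decomposes any such map into automorphisms of the various $\FF_n$ and elementary transformations between them. The key algebraic fact I would isolate is that $J_{[1:0:0]}$ is \emph{generated} by $\Aut(\PP^1\times\PP^1)\cap J$ and $\Aut(\FF_2)\cap J$ inside $\Bir(\PP^2)$, and moreover that it is the amalgamated product of these two groups along their intersection. This last point is the analogue, for the de Jonqui\`eres group, of the classical structure of $\Bir(\PP^1\times\PP^1)$ over the base: it should follow from the square-complex / $\mathrm{CAT}(0)$ or Bass--Serre machinery applied to the action on the graph of conic bundles birational to $\FF_1\to\PP^1$, or alternatively by a direct Euclidean-algorithm argument on the "degree in the fibre".

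Granting that, the second step is purely formal amalgam bookkeeping. Write $G=\Bir(\PP^2)$. From \cite{Bla12} we have $G=\big(\Aut(\PP^2)\ast J_{[1:0:0]}\big)/\!\sim$ amalgamated over $\Aut(\PP^2)\cap J_{[1:0:0]}$, with one relator $R_0$. Substituting the amalgam decomposition $J_{[1:0:0]}=\big(\Aut(\PP^1\times\PP^1)_J\ast\Aut(\FF_2)_J\big)$ over their intersection, and using associativity of amalgamated (colimit) products over a diagram of subgroups, one obtains that $G$ is the colimit of the diagram consisting of $\Aut(\PP^2)$, $\Aut(\FF_2)$, $\Aut(\PP^1\times\PP^1)$ (all as subgroups of $G$) with the arrows being the inclusions of the pairwise intersections, modulo the single relator $R_0$. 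The remaining work is to identify $R_0$ concretely: I would trace through the relation of \cite{Bla12}, which can be taken to be a word expressing that a specific composition of a quadratic map with linear maps is trivial, and rewrite it in terms of $\sigma$ and $\tau_{13}$; one checks directly that $\tau_{13}\sigma\tau_{13}\sigma=1$ in $\Bir(\PP^2)$ (indeed $\sigma$ normalises the standard torus and $\tau_{13}\sigma\tau_{13}$ is again a standard quadratic involution, so their product is an involution composed with an involution sharing the right base points — a short coordinate check), and that this relation, together with the amalgam relations, suffices to recover $R_0$. Equivalently, one shows $\ker\big(\text{colimit}\to G\big)$ is normally generated by $\tau_{13}\sigma\tau_{13}\sigma$.

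The main obstacle I expect is the second half of the first step: proving that $J_{[1:0:0]}$ is genuinely the \emph{amalgamated product} (not merely generated by) $\Aut(\PP^1\times\PP^1)_J$ and $\Aut(\FF_2)_J$ over their intersection — i.e. that there are no "hidden" relations among elementary transformations of Hirzebruch surfaces beyond the obvious ones. Establishing this requires either constructing a tree (or square complex) on which $J_{[1:0:0]}$ acts with the right edge and vertex stabilisers and applying Bass--Serre theory, or carefully running a normal-form/reduced-word argument for words in elementary links, keeping track of the self-intersection of the section and ruling out cancellation. A secondary, more technical nuisance will be the precise matching of the single relator: the relation in \cite{Bla12} is attached to a particular choice of base point and quadratic map, so some conjugation/coordinate-change juggling is needed to bring it to the clean form $\tau_{13}\sigma\tau_{13}\sigma$, and one must check that conjugating the relator does not require relations outside the amalgam. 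Everything else — the geometry of elementary transformations, the identification of $\Aut(\FF_2)$ and $\Aut(\PP^1\times\PP^1)$ as the relevant vertex groups, the formal manipulation of colimits of groups — I expect to be routine.
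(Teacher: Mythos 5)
Your overall strategy --- decompose $J$ as an amalgam of two algebraic subgroups, feed this into the presentation of \cite{Bla12}, and shuffle colimits --- founders on the key intermediate claim, which is false: $J$ is \emph{not} the amalgamated product of $A:=\Aut(\FF_0)\cap J$ and $B:=\Aut(\FF_2)\cap J$ along $C:=A\cap B$. Here is an explicit obstruction, already among linear maps. By Lemma~\ref{lem auto linear}, $\Aut(\FF_0)\cap\Aut(\PP^2)$ is the stabiliser of $\{[1:0:0],[0:1:0]\}$ and $\Aut(\FF_2)\cap\Aut(\PP^2)$ is the stabiliser of $[1:0:0]$ and of the line $\{y=0\}$. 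Put $\alpha\colon[x:y:z]\mapsto[x:y+z:z]$ and $\beta\colon[x:y:z]\mapsto[x+y:y:z]$. Then $\alpha\in A\setminus B$, $\beta\in B\setminus A$, and the commutator $\gamma:=\beta\alpha\beta^{-1}\alpha^{-1}\colon[x:y:z]\mapsto[x+z:y:z]$ lies in $C$. One checks that $\alpha^{-1}\gamma^{-1}\colon[x:y:z]\mapsto[x-z:y-z:z]$ does not preserve $\{y=0\}$, so it lies in $A\setminus C$. Hence $\beta\cdot\alpha\cdot\beta^{-1}\cdot(\alpha^{-1}\gamma^{-1})$ is a reduced word of length four with letters alternately in $B\setminus C$ and $A\setminus C$; by the normal form theorem it is nontrivial in $A\ast_C B$, yet it equals $\Id$ in $J$. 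So no Bass--Serre tree with these vertex stabilisers exists, and no normal-form argument on elementary links can produce one. (A secondary point: $A\cup B$ does not contain a general element of $\Aut(\PP^2)\cap J$, so even the generation statement needs the linear de Jonqui\`eres maps as a further factor.)

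This is not a repairable detail: the entire difficulty of the theorem sits exactly where your plan invokes ``Bass--Serre machinery''. What is true, and what the paper proves, is only that $J$ embeds into the generalised amalgam of the \emph{three} groups $\Aut(\PP^2)$, $\Aut(\FF_0)$, $\Aut(\FF_2)$ \emph{after} imposing the extra relation $\tau_{13}\sigma_3\tau_{13}\sigma_3$ (Lemma~\ref{lem dJ Id} and Corollary~\ref{cor dJ Id}); this embedding is established not by tree methods but by an induction on the degrees of the intermediate linear systems of a word representing $\Id$, using a stock of length-three relations (Lemmas~\ref{lem degree 1,2}, \ref{lem square dJ}, \ref{lem deg 3 proper base-point}) whose proofs themselves rely on the imposed relation through Lemma~\ref{lem permutations}. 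Relatedly, your account of the relator is off: $\tau_{13}\sigma_3\tau_{13}\sigma_3$ is not obtained by rewriting the relator of \cite{Bla12} --- that relator can be taken to be $\tau_{12}\sigma_3\tau_{12}\sigma_3$, which already holds inside $\Aut(\FF_0)$ and is therefore free of charge in the generalised amalgam. The relation $\tau_{13}\sigma_3\tau_{13}\sigma_3$ is a genuinely additional one, imposed precisely because the conjugate relation $\tau_{23}\sigma_3\tau_{23}\sigma_3$, which holds in $J$ but whose letters $\tau_{23}$ and $\sigma_3$ lie in different factors, is not known to follow from the amalgam relations alone; without imposing it, the embedding of $J$ --- and with it the isomorphism $\Bir(\PP^2)\cong\mathfrak{G}$ --- breaks down.
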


Here the inclusion of $\Aut(\PP^1\times\PP^1)$ into $\Bir(\PP^2)$ is the same as before, $\FF_2$ is the second Hirzebruch surface and the inclusion of $\Aut(\FF_2)$ into $\Bir(\PP^2)$ is given by a birational map $\PP^2\dashrightarrow\FF_2$ given by the system of lines through $[1:0:0]$ and the point infinitely near corresponding to the tangent direction $\{y=0\}$. 

The method used to prove Theorem B is, like in \cite{Bla12} and \cite{Isk}, to study linear systems and their base-points. The difference here is that our maps have bounded degree which rigidifies the situation and changes the possibilities for simplifications. The proof of Theorem B does not use \cite{W}.

For $k=\C$, the three groups $\Aut(\PP^2)$, $\Aut(\FF_2)$, $\Aut(\PP^1\times\PP^1)$ are locally compact algebraic groups. Using this and Theorem B we prove Theorem A.\\

The plan of the article is as follows: 

In Section~\ref{1} and Section~\ref{2} we give basic definitions and results on $\Aut(\PP^1\times\PP^1)$ and $\Aut(\FF_2)$. Section~\ref{3} is devoted to relations in the generalised amalgamated product of $\Aut(\PP^2)$, $\Aut(\FF_0)$, $\Aut(\FF_2)$ modulo the relation $\tau_{13}\sigma\tau_{13}\sigma$. These are the backbone of the proof of Theorem B, which will be given in Section~\ref{4}. In Section~\ref{5} we visit some facts about compactly presented groups and then finally prove Theorem A. 

In Section~\ref{1} to \ref{4} we work over any algebraically closed field $k$ and Section~\ref{5} restricts to $k=\C$.\\

The author would like to thank Jérémy Blanc, Pierre de la Harpe and Yves de Cornulier for the interesting and helpful discussions.

\section{Description of $\Aut(\PP^1\times\PP^1)$ and $\Aut(\FF_2)$ inside the Cremona group}\label{1}

This section is devoted to the description of the subgroups $\Aut(\PP^1\times\PP^1)$ and $\Aut(\FF_2)$ of $\Bir(\PP^2)$. 

Remember that the $n$-th Hirzebruch surface $\FF_n$, $n\in\N$, is given by
$$\FF_n=\{([x:y:z],[u:v])\in\PP^2\times\PP^1\mid yv^n=zu^n\}.$$
Observe that $\FF_0= \PP^1\times\PP^1$ and that $\FF_1$ is isomorphic to the blow-up of one point in $\PP^2$.

Consider the birational maps $\varphi_0:\PP^2\dashrightarrow \FF_0$ and $\varphi_2:\PP^2\dashrightarrow\FF_2$ given as follows: The map $\varphi_0$ is given by the blow-up of the points $[1:0:0]$ and $[0:1:0]$ followed by the contraction of the line passing through them. The map $\varphi_2$ is given by the blow up of $[1:0:0]$ and the point infinitely near $[1:0:0]$ lying on the strict transform of $\{y=0\}$ followed by the contraction of the strict transform of $\{y=0\}$. The birational maps $\varphi_0$ and $\varphi_2$ are only defined up to automorphism of $\FF_0$ and $\FF_2$. 
They induce homomorphisms of groups
\begin{align*}&\Aut(\FF_0)\rightarrow\Bir(\PP^2),\quad \psi\mapsto \varphi_0^{-1}\psi\varphi_0\\ &\Aut(\FF_2)\rightarrow\Bir(\PP^2),\quad \psi\mapsto\varphi_2^{-1}\psi\varphi_2\end{align*} 
whose image is uniquely determined by the choice of points blown-up in $\PP^2$. We will denote the image of $\Aut(\FF_i)$ also by $\Aut(\FF_i)$ for $i=0,2$ since no confusion occurs. 

\begin{Rmk}[and Notation]
\label{def inverse}
\begin{enumerate}
\item\label{rmk trans} We can check that 
$$\PP^2\dashrightarrow\FF_0,\quad[x:y:z]\dashmapsto([x:z],[y:z])$$
with inverse $([u_0:u_1],[v_0:v_1])\dashmapsto[u_0v_1:v_0u_1:u_1v_1]$, and \\
$$\PP^2\dashrightarrow\FF_2,\quad[x:y:z]\dashmapsto([xy:y^2:z^2],[y:z])$$ 
with inverse $([u:v:w],[a:b])\dashmapsto[ua:va:vb]$ are examples for $\varphi_0$ and $\varphi_2$. 
\item For $i=0,2$, the map $(\varphi_i)^{-1}$ has exactly one base-point, which we denote by $p_i$. 
\item\label{rmk pt} The image of the linear system of lines of $\PP^2$ by $\varphi_i$ has a unique base-point, namely $p_i$.
\item We denote by $C_1$ the curve of self intersection 0 in $\FF_0$ which is contracted by $(\varphi_0)^{-1}$ onto $[1:0:0]$ and by $C_2$ the curve of self intersection 0 which is contracted onto $[0:1:0]$. Remark that $p_0=\varphi_0(\{z=0\})$ and that $\{p_0\}=C_1\cap C_2$.
\item We denote by $E$ the exceptional curve of self intersection -2 in $\FF_2$. It is contracted onto $[1:0:0]$ by $(\varphi_2)^{-1}$. Denote by $C$ the curve of self intersection 0 in $\FF_2$ which is contracted by $(\varphi_2)^{-1}$ onto the point infinitely near $[1:0:0]$ corresponding to the tangent $\{y=0\}$. Remark that $p_2=\varphi_2(\{y=0\})$ and $p_2\in C\setminus E$.
\item\label{rmk inters} Let $L\subset\PP^2$ be a general line. Then $C_j\cdot\varphi_0(L)=1$, $j=1,2$, and $C\cdot\varphi_2(L)=1$, $E\cdot\varphi_2(L)=0$. 

\end{enumerate}
\end{Rmk}

The following picture illustrates for $i=0,2$ the transformation $(\varphi_i)^{-1}\psi_i\varphi_i$ where $\psi_i$ is some automorphism of $\FF_i$. At the same time it shows the blow-up diagram of $(\varphi_i)^{-1}\psi_i\varphi_i$.

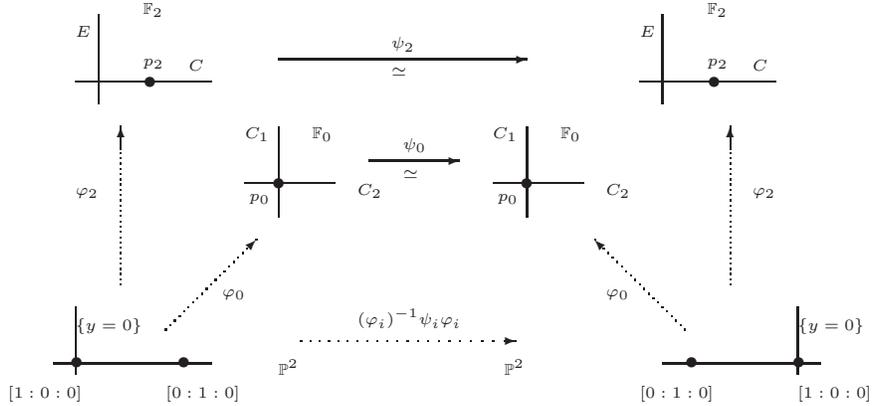
\begin{figure}[H]
\begin{center}
\begin{picture}(125,135)
\setlength{\unitlength}{3mm}
\multiput(-10,0.5)(27,0){2}{\line (1,0){7}}
\multiput(-9,0)(32,0){2}{\line(0,1){3}}
\multiput(-9,2)(32,0){2}{\tiny$\{y=0\}$}
\multiput(-9.25,0.25)(32,0){2}{\unboldmath$\bullet$}
\multiput(-12,-1)(35,0){2}{\tiny$[1:0:0]$}
\multiput(-4.5,0.25)(22.5,0){2}{\unboldmath$\bullet$}
\multiput(-5,-1)(21,0){2}{\tiny$[0:1:0]$}
\multiput(0,0)(10,0){2}{\tiny$\PP^2$}
\qbezier[20](1,1.5)(5,1.5)(10,1.5)
\put(10,1.5){\vector(1,0){0.5}}
\put(3.5,2.25){\tiny$(\varphi_i)^{-1}\psi_i\varphi_i$}
\multiput(1.5,10.5)(11,0){2}{\tiny$\FF_0$}
\qbezier[20](-5,2)(-3,4)(-1,6)
\put(-2,5){\vector(1,1){1}}
\qbezier[20](18,2)(16,4)(14,6)
\put(15,5){\vector(-1,1){1}}
\multiput(-2.5,3.5)(17,0){2}{\tiny$\varphi_0$}
\multiput(0,7)(11,0){2}{\line(0,1){4}}
\multiput(-1.5,10.5)(11,0){2}{\tiny$C_1$}
\multiput(-1.5,8.5)(11,0){2}{\line(1,0){4}}
\multiput(3.5,8)(11,0){2}{\tiny$C_2$}
\multiput(-0.3,8.2)(11,0){2}{\unboldmath$\bullet$}
\multiput(-1.3,7.7)(11,0){2}{\tiny$p_0$}
\put(4,9.5){\vector(1,0){4}}
\put(5.5,10){\tiny$\psi_0$}
\put(5.5,8.75){\tiny$\simeq$}
\multiput(-6,16)(25,0){2}{\tiny$\FF_2$}
\qbezier[30](-7,4)(-7,7.5)(-7,11)
\put(-7,10){\vector(0,1){1}}
\qbezier[30](20,4)(20,7.5)(20,11)
\put(20,10){\vector(0,1){1}}
\multiput(-9,8)(30,0){2}{\tiny$\varphi_2$}
\multiput(-8,12)(25,0){2}{\line(0,1){4}}
\multiput(-9,15)(25,0){2}{\tiny$E$}
\multiput(-9,13)(25,0){2}{\line(1,0){6}}
\multiput(-4,13.5)(25,0){2}{\tiny$C$}
\multiput(-6,12.7)(25,0){2}{\unboldmath$\bullet$}
\multiput(-6,13.7)(25,0){2}{\tiny$p_2$}
\put(0,14){\vector(1,0){11}}
\put(5,14.5){\tiny$\psi_2$}
\put(5,13.25){\tiny$\simeq$}
\end{picture}
\end{center}
\caption{The transformation $(\varphi_i)^{-1}\psi_i\varphi_i$ for $i=0,2$}
\label{blow-up}
\end{figure}

Consider the birational transformations of $\PP^2$ given by
\begin{align*}&\sigma_{1}:[x:y:z]\dashmapsto[-xy+z^2:y^2:yz]\\&\sigma_{2}:[x:y:z]\dashmapsto[xy:z^2:yz]\\&\sigma_3:[x:y:z]\dashmapsto[yz:xz:xy]\end{align*} 
They are three quadratic involutions of $\PP^2$ with respectively exactly one, two and three proper base-points in $\PP^2$. The map $\sigma_3$ is usually referred to as standard quadratic involution of $\PP^2$. 

The map $\sigma_3$ has base-points $[1:0:0],[0:1:0],[0:0:1]$, the map $\sigma_2$ has base-points $[1:0:0],[0:1:0]$ and the point $p$ infinitely near $[1:0:0]$ corresponding to the direction $\{y=0\}$, and the map $\sigma_1$ has base-points $[1:0:0],p,q$ where $q$ is a point infinitely near $p$ not contained in the intersection of the strict transform of the exceptional divisor of $[1:0:0]$.

\begin{Rmk}\label{rmk linsyst}
For any quadratic map $\tau\in\Bir(\PP^2)$ we can find $i=1,2,3$ and $\alpha,\beta\in\Aut(\PP^2)$ such that $\alpha$ sends the base-points of $\tau$ onto the base-points of $\sigma_i$ and $\beta$ sends the base-points of $\tau^{-1}$ onto the base-points of $\sigma_i$. We can then write $\tau=\beta^{-1}\sigma_i\alpha$. \cite[\S2.1 and \S2.8]{Alb02}

It follows that the linear system of $\tau$ is the image of the linear system of $\sigma_i$ by $\alpha^{-1}$, and that $\tau$ and $\sigma_i$ have the same amount of proper base-points in $\PP^2$. Since $\sigma_1,\sigma_2,\sigma_3$ have respectively one, two and three proper base-points in $\PP^2$ the amount of proper base-points of $\tau$ determines $i$. 
\end{Rmk}

The following is the description of the groups $\Aut(\FF_0)$ and $\Aut(\FF_2)$ as subgroups of $\Bir(\PP^2)$ given by the above inclusions:

\begin{Lem}\label{lem auto linear}
\begin{enumerate}
\item\label{11} For $i=0,2$ the group $\mathcal{A}_i:=\Aut(\FF_i)\cap\Aut(\PP^2)$ is the group of automorphisms of $\PP^2$ fixing the set of base-points of $\varphi_i$, i.e. the set $\{[1:0:0],[0:1:0]\}$ if $i=0$, and the point $[1:0:0]$ and the line $\{y=0\}$ if $i=2$. 

For each $i\in \{0,2\}$, $\mathcal{A}_i$ corresponds via $\varphi_i$ to the set of automorphisms of $\FF_i$ that fix $p_i$. 
\item\label{13} The set $\Aut(\FF_0)\setminus\Aut(\PP^2)$ consists of all elements of the form $\beta\sigma_i\alpha$, where $i=2,3$ and $\alpha,\beta\in\Aut(\FF_0)\cap\Aut(\PP^2)$. 
\item\label{13.5} The set $\mathcal{A}_0\cup\mathcal{A}_0\sigma_2\mathcal{A}_0$ corresponds via $\varphi_0$ to the set of automorphisms of $\FF_0$ sending $p_0$ into $C_1\cup C_2$. 
\item\label{14} The set $\Aut(\FF_2)\setminus\Aut(\PP^2)$ consists of all elements of the form $\beta\sigma_i\alpha$, where $i=1,2$ and $\alpha,\beta\in\Aut(\FF_2)\cap\Aut(\PP^2)$.
\item\label{15} The set $\mathcal{A}_2\cup\mathcal{A}_2\sigma_1\mathcal{A}_2$ corresponds via $\varphi_2$ to the set of automorphisms of $\F_2$ that send $p_2$ into $C$.
\end{enumerate}
\end{Lem}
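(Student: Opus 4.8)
The plan is to treat all five statements uniformly by analysing, for $\psi\in\Aut(\FF_i)$, the linear system and the base-points of $f:=\varphi_i^{-1}\psi\varphi_i\in\Bir(\PP^2)$ with the help of Remark~\ref{def inverse}. First I would record the basic computation: by Remark~\ref{def inverse}(\ref{rmk pt}) the linear system $\varphi_i(\mathrm{lines})$ has the single base-point $p_i$, so $\psi$ carries it to a linear system of the same class whose only base-point is $\psi^{-1}(p_i)$; applying $\varphi_i^{-1}$, a general member $D$ of the latter avoids $p_i$ as soon as $\psi(p_i)\neq p_i$, and then its transform in $\PP^2$ has degree $D\cdot\varphi_i(L)=(\varphi_i(L))^2$, which the intersection numbers of Remark~\ref{def inverse}(\ref{rmk inters}) show equals $2$. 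Hence $f$ is linear if $\psi(p_i)=p_i$ and quadratic otherwise.

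For (1): the computation above gives $f\in\Aut(\PP^2)$ exactly when $\psi(p_i)=p_i$. If $f\in\Aut(\PP^2)$, then $\varphi_i f=\psi\varphi_i$ has the same base-points and contracts the same curve as $\varphi_i$, which forces $f$ to stabilise the base-point configuration of $\varphi_i$; conversely, an $f\in\Aut(\PP^2)$ stabilising it makes $\varphi_i f$ differ from $\varphi_i$ by an automorphism of $\FF_i$, so $f\in\mathcal A_i$. This identifies $\mathcal A_i$ with the stated subgroup, and since $p_i=\varphi_0(\{z=0\})$, resp. $p_i=\varphi_2(\{y=0\})$ (Remark~\ref{def inverse}), with the automorphisms of $\FF_i$ fixing $p_i$.

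For (2) and (4) I would take $\psi$ with $\psi(p_i)\neq p_i$, so $f$ is quadratic, and locate its base-points: the points onto which $\varphi_i^{-1}$ contracts $C_1,C_2$ (resp. $C$), namely $[1:0:0],[0:1:0]$ (resp. $[1:0:0]$ and the point infinitely near it in the direction $\{y=0\}$), are always among them, while the remaining base-point is the transform of $\psi^{-1}(p_i)$ and, by tracing it through the resolution of $\varphi_i$, is a proper point in general position with respect to the first two exactly when $\psi^{-1}(p_i)\notin C_1\cup C_2$ (resp. $\notin C$), and infinitely near one of them otherwise. Counting proper base-points, Remark~\ref{rmk linsyst} then writes $f=\beta'\sigma_j\alpha'$ with $\alpha',\beta'\in\Aut(\PP^2)$ and $j\in\{2,3\}$ (resp. $j\in\{1,2\}$). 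The extra step is to see that $\alpha'$ may be chosen in $\mathcal A_i$: since the remaining base-point of $f$ occupies the same type of position (a proper point off the relevant line, resp. an infinitely near point in a non-special direction) as the corresponding base-point of $\sigma_j$, and $\mathcal A_i$ acts transitively on such positions, one finds $\alpha\in\mathcal A_i$ taking the base-points of $f$ onto those of $\sigma_j$; then $\sigma_j\alpha$ has the same linear system as $f$, so $f=c\sigma_j\alpha$ for some $c\in\Aut(\PP^2)$. Checking directly, via the formulas of Remark~\ref{def inverse}(\ref{rmk trans}), that $\varphi_i\sigma_j\varphi_i^{-1}\in\Aut(\FF_i)$ for these $j$, one gets $\varphi_i c\varphi_i^{-1}=(\varphi_i f\varphi_i^{-1})(\varphi_i\alpha\varphi_i^{-1})^{-1}(\varphi_i\sigma_j\varphi_i^{-1})^{-1}\in\Aut(\FF_i)$, hence $c\in\mathcal A_i$ and $f=c\sigma_j\alpha$ is of the required form. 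The opposite inclusion is easy: $\mathcal A_i$ and $\varphi_i\sigma_j\varphi_i^{-1}$ lie in the group $\Aut(\FF_i)$, whereas $\sigma_j\notin\Aut(\PP^2)$.

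Finally (3) and (5) follow by bookkeeping: $f\in\mathcal A_0$ iff $\psi(p_0)=p_0$, and $f\in\mathcal A_0\sigma_2\mathcal A_0$ iff $f$ is quadratic with exactly two proper base-points (the value $j=3$, with three proper base-points, being excluded) iff $\psi^{-1}(p_0)\in(C_1\cup C_2)\setminus\{p_0\}$; hence $f\in\mathcal A_0\cup\mathcal A_0\sigma_2\mathcal A_0$ iff $\psi^{-1}(p_0)\in C_1\cup C_2$, and since $\mathcal A_0\cup\mathcal A_0\sigma_2\mathcal A_0$ is stable under inversion ($\sigma_2$ being an involution) this is equivalent to $\psi(p_0)\in C_1\cup C_2$. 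The case of $\FF_2$ is identical, with $C$ replacing $C_1\cup C_2$ and $\sigma_1$ (the quadratic map with one proper base-point) replacing $\sigma_2$. I expect the real work to be in the degree computation and in tracking the third base-point of $f$ through the resolution of $\varphi_i$ — including its infinitely near structure — as well as in verifying that $\alpha$, and consequently $c$, can be taken inside $\mathcal A_i$ rather than merely inside $\Aut(\PP^2)$.
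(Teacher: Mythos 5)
Your proposal is correct and follows essentially the same route as the paper: compute the degree of $\varphi_i^{-1}\psi\varphi_i$ and locate its base-points (the images of the contracted curves plus the transform of $\psi^{\pm1}(p_i)$), invoke Remark~\ref{rmk linsyst} to write the map as $\beta\sigma_j\alpha$, and verify via the explicit formulas that $\sigma_j\in\Aut(\FF_i)$ so that $\alpha,\beta$ land in $\mathcal{A}_i$. The only cosmetic differences are that you get $\deg=2$ from $(\varphi_i(L))^2=2$ where the paper counts base-points, and you choose $\alpha\in\mathcal{A}_i$ by transitivity where the paper checks that the $\alpha$ furnished by Remark~\ref{rmk linsyst} already preserves the relevant configuration (after adjusting by a coordinate permutation when $j=3$).
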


\begin{proof} 
For $i=0,2$ let $\psi_i$ be an automorphism of $\FF_i$ and consider the following commutative diagram
\[\xymatrix{&\FF_i\ar[r]^{\psi_i}&\FF_i\ar@{-->}[dr]^{(\varphi_i)^{-1}}& \\ \PP^2\ar@{-->}[rrr]^{(\varphi_i)^{-1}\psi_i\varphi_i}\ar@{-->}[ur]^{\varphi_i}&&&\PP^2. }\] 

\ref{11}: The map $(\varphi_i)^{-1}\psi_i\varphi_i$ is an automorphism if and only if it does not have any base-points, which is equivalent to $\psi_i$ preserving the union of the curves contracted by $(\varphi_i)^{-1}$ and fixing the point $p_i$ blown-up by $(\varphi_i)^{-1}$. This is equivalent to $(\varphi_i)^{-1}\psi_i\varphi_i$ being an automorphism preserving the set of base-points of $\varphi_i$.

\ref{13} to \ref{15}: Let $\Delta$ be the linear system of lines in $\PP^2$. We will determine the linear system $(\varphi_i)^{-1}\psi_i\varphi_i(\Delta)$. Note that \ref{11} shows that $(\varphi_i)^{-1}\psi_i\varphi_i(\Delta)=\Delta$ if and only if $\psi_i$ preserves $p_i$ and the union of lines contracted by $(\varphi_i)^{-1}$, which is equivalent to $\psi_i$ fixing $p_i$.

Assume that $\psi_i(p_i)\neq p_i$ holds. From this it follows that $(\varphi_i)^{-1}\psi_i\varphi_i$ has at least one and at most three base-points, hence is a quadratic map. In particular, $(\varphi_i)^{-1}\psi_i\varphi_i(\Delta)$ is a linear system of conics. 

\ref{13} and \ref{13.5}: If $i=0$, we can check that $\sigma_2,\sigma_3$ are elements of $\Aut(\FF_0)$. In fact, if we take $\varphi_0$ as in Remark~\ref{def inverse}~\ref{rmk trans} they are given by the automorphisms $([u_0:u_1],[v_0:v_1])\dashmapsto[u_0v_1:v_0u_1:u_1v_1]$ and $([u_1:v_1],[u_2:v_2])\dashmapsto([v_1:u_1],[v_2:u_2])$ respectively. It follows that the set $\mathcal{A}_0\sigma_2\mathcal{A}_0\cup\mathcal{A}_0\sigma_3\mathcal{A}_0$ is contained in $\Aut(\FF_0)$. 

A general element of $\psi_0\varphi_0(\Delta)$ intersects each $C_j$ in exactly one point different from $p_0$ (Remark~\ref{def inverse}~\ref{rmk pt},\ref{rmk inters}), which means that $[1:0:0],[0:1:0]$ are base-points of the linear system of conics $(\varphi_0)^{-1}\psi_0\varphi_0(\Delta)$. The third base-point corresponds via $\varphi_0$ to the point $\psi_0(p_0)$. In particular, it is infinitely near to $[1:0:0]$ (resp. $[0:1:0]$) if and only if $\psi_0(p_0)\in C_1$ (resp. $\psi_0(p_0)\in C_2$). By Remark~\ref{rmk linsyst} we can write $(\varphi_0)^{-1}\psi_0\varphi_0=\beta\sigma_j\alpha$ for some $j=2,3$ and $\alpha,\beta\in\Aut(\PP^2)$, where $\alpha,\beta^{-1}$ respectively send the base-points of $(\varphi_0)^{-1}\psi_0\varphi_0$, $(\varphi_0)^{-1}(\psi_0)^{-1}\varphi_0$ onto the base-points of $\sigma_j$. If $j=2$, it follows that $\alpha,\beta$ fix the set $\{[1:0:0],[0:1:0]\}$. If $j=3$, we have $(\beta\theta)\sigma_3(\theta\alpha)=\beta\sigma_3\alpha$ for any permutation $\theta$ of coordinates $x,y,z$, hence we can assume that $\alpha$ fixes the set $\{[1:0:0],[0:1:0]\}$ and it follows that $\alpha\in\mathcal{A}_0$. Since $\sigma_2,\sigma_3\in\Aut(\FF_0)$, it follows that $\beta\in\mathcal{A}_0$. 

Note that $(\varphi_0)^{-1}\psi_0\varphi_0$ has an infinitely near base-point if and only if $\psi_0(p_0)\in (C_1\cup C_2)\setminus \{p_0\}$. 

\ref{14} and \ref{15}: If $i=2$, we can check that $\sigma_1,\sigma_2\in\Aut(\FF_2)$. In fact, if we take $\varphi_2$ as in Remark~\ref{def inverse}~\ref{rmk trans} then they are given by the automorphisms $([u:v:w],[a:b])\mapsto([-u+w:v:w],[a:b])$ and $([u:v:w],[a:b])\mapsto([u:w:v],[b:a])$ respectively. It follows that $\mathcal{A}_2\sigma_1\mathcal{A}_2\cup\mathcal{A}_2\sigma_2\mathcal{A}_2\subset\Aut(\FF_2)$.

A general element of $\psi_2\varphi_2(\Delta)$ does not intersect $E$ and intersects $C$ in exactly one point different from $p_2$ (Remark~\ref{def inverse}~\ref{rmk pt},\ref{rmk inters}). Therefore, $[1:0:0]$, the point $p$ infinitely near to it corresponding to the tangent direction $\{y=0\}$ are  base-points of the linear system $(\varphi_2)^{-1}\psi_2\varphi_2(\Delta)$. The third base-point correspond via $\varphi_2$ to the point $\psi_2(p_2)$. In particular, it is infinitely near to $p$ if and only if $\psi_2(p_2)\in C$ and it is a proper point of $\PP^2$ otherwise. It follows from Remark~\ref{rmk linsyst} that we can write $(\varphi_2)^{-1}\psi_2\varphi_2=\beta\sigma_j\alpha$ for $j=1,2$ and $\alpha,\beta\in\Aut(\PP^2)$ where $\alpha,\beta^{-1}$ respectively send the linear system of $(\varphi_2)^{-1}\psi_2\varphi_2$, $(\varphi_2)^{-1}(\psi_2)^{-1}\varphi_2$ onto the linear system of $\sigma_j$. It follows that $\alpha,\beta$ fix $[1:0:0],p$ and hence $\alpha,\beta\in\mathcal{A}_2$. 

Note that $(\varphi_2)^{-1}\psi_2\varphi_2$ has exactly one proper base-points in $\PP^2$ if and only if $\psi_2$ sends $p_2$ into $C\setminus\{p_2\}$. 
 \end{proof}
 
Lemma~\ref{lem auto linear} allows us to present the following classical results and also describe a Zariski-open set in each $\Aut(\FF_i)$ which will be useful in Section~\ref{5} when we prove that $\Bir(\PP^2)$ is compactly presented.
 
\begin{Lem}\label{cor auto linear}
 For $i=0,2$, let $\mathcal{A}_i:=\Aut(\PP^2)\cap\Aut(\FF_i)$. 
\begin{enumerate}
\item\label{41} The groups $\Aut(\FF_0)$ and $\Aut(\FF_2)$ are linear algebraic subgroups of $\Bir(\PP^2)$.
\item\label{42} The group $\Aut(\FF_0)$ has two irreducible components, namely the component $\Aut(\FF_0)^0$ containing $1$ and $\tau_{12}\Aut(\FF_0)^0$, where $\tau_{12}\in\Aut(\FF_0)$ is given by $\tau_{12}\colon[x:y:z]\mapsto[y:x:z]$. 
\item\label{43} The group $\Aut(\FF_2)$ is irreducible.  
\item\label{44} The set $\mathcal{A}_0\sigma_3\mathcal{A}_0$ is a Zariski-open set of $\Aut(\FF_0)$. 
\item\label{45} The set $\mathcal{A}_2\sigma_2\mathcal{A}_2$ is a Zariski-open set of $\Aut(\FF_2)$.
\end{enumerate}
\end{Lem}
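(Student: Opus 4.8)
The plan is to treat the five items in turn, using Lemma~\ref{lem auto linear} as the main input for the description of $\Aut(\FF_i)$ inside $\Bir(\PP^2)$, and standard facts about Hirzebruch surfaces for the algebraic-group structure. First I would establish \ref{41}. The group $\Aut(\FF_0)=\Aut(\PP^1\times\PP^1)$ is $(\PGL_2\times\PGL_2)\rtimes\Z/2\Z$, a linear algebraic group, and $\Aut(\FF_2)$ is also a well-known linear algebraic group (it is $(\mathbb{G}_a^3\rtimes\GL_2)/\mu$ up to isogeny, of dimension $7$); the point is that the inclusions into $\Bir(\PP^2)$ via $\varphi_0,\varphi_2$ are algebraic, i.e. the composed maps land in $\Bir(\PP^2)_{\le 2}$ and the induced morphism is a morphism of algebraic varieties. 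Concretely, by Lemma~\ref{lem auto linear}\ref{13}--\ref{14} every element of $\Aut(\FF_i)$ has degree $\le 2$ in $\Bir(\PP^2)$, so it sits inside the algebraic variety $\Bir(\PP^2)_{\le 2}$, and one checks the group law is regular there; thus $\Aut(\FF_i)$ is a closed algebraic subgroup.

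For \ref{42} and \ref{43}, I would argue on the abstract groups rather than inside $\Bir(\PP^2)$, since the inclusions are isomorphisms onto their images. The identity component of $\Aut(\FF_0)$ is $\PGL_2\times\PGL_2$, which is connected, and $\Aut(\FF_0)/\Aut(\FF_0)^0\cong\Z/2\Z$, the nontrivial coset being represented by the swap of the two rulings; under $\varphi_0$ this swap is exactly $\tau_{12}\colon[x:y:z]\mapsto[y:x:z]$ (it exchanges $[1:0:0]$ and $[0:1:0]$, hence $C_1$ and $C_2$), giving the stated decomposition. For $\FF_2$ with $n=2>0$, the ruling is unique, so there is no such involution, and $\Aut(\FF_2)$, being an extension of the connected group $\GL_2/\mu_2$ (acting on the ruling and the fibres) by the connected unipotent group of ``shears'' along the fibres, is connected, hence irreducible.

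For \ref{44} and \ref{45}, the strategy is to identify the complement of $\mathcal{A}_i\sigma_j\mathcal{A}_i$ (with $(i,j)=(0,3)$ or $(2,2)$) as a proper Zariski-closed subset. By Lemma~\ref{lem auto linear}, every element of $\Aut(\FF_i)$ is either in $\mathcal{A}_i$ or of the form $\beta\sigma_k\alpha$ with $\alpha,\beta\in\mathcal{A}_i$ and $k$ in $\{2,3\}$ (for $i=0$) or $\{1,2\}$ (for $i=2$); moreover $k$ is determined by the number of \emph{proper} base-points in $\PP^2$, equivalently by where $\psi_i(p_i)$ lands: for $i=0$, $k=3$ iff $\psi_0(p_0)\notin C_1\cup C_2$, and for $i=2$, $k=2$ iff $\psi_2(p_2)\in C\setminus\{p_2\}$. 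So $\mathcal{A}_0\sigma_3\mathcal{A}_0$ corresponds to $\{\psi_0 : \psi_0(p_0)\notin C_1\cup C_2\}$, whose complement is the preimage under the regular map $\psi_0\mapsto\psi_0(p_0)$ of the closed curve $C_1\cup C_2$ — a proper closed subset since $\Aut(\FF_0)^0$ acts transitively on $\FF_0$ so the evaluation map is dominant. Likewise $\mathcal{A}_2\sigma_2\mathcal{A}_2$ corresponds to $\{\psi_2 : \psi_2(p_2)\in C\setminus\{p_2\}\}$; here I would instead describe the complement as $\{\psi_2(p_2)\in E\}\cup\{\psi_2(p_2)=p_2\}\cup\{\psi_2(p_2)\notin C\}$, and note that $\{\psi_2(p_2)\in C\}$ is itself open (its complement $\{\psi_2(p_2)\notin C\}$, i.e. $\mathcal{A}_2\cup\mathcal{A}_2\sigma_1\mathcal{A}_2$, being closed as the locus where the degree-$1$-or-has-an-infinitely-near-base-point condition — a closed condition on linear systems — holds), while removing the further closed conditions $\psi_2(p_2)\in E$ and $\psi_2(p_2)=p_2$ keeps it open; nonemptiness follows since $\sigma_2$ itself lies in it.

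The main obstacle I anticipate is \ref{44}--\ref{45}: turning the set-theoretic descriptions from Lemma~\ref{lem auto linear} into genuinely \emph{open} (not merely constructible) statements. The subtlety is that ``$\psi_i(p_i)$ lies on a given curve'' is visibly closed, but one must be careful that $\mathcal{A}_i\sigma_j\mathcal{A}_i$ is exactly the \emph{open} stratum and not, say, a constructible set that is open only in its closure; this is why I separate out, for $i=2$, the three distinct closed degenerations ($p_2\mapsto$ a point off $C$, $p_2\mapsto p_2$ itself, $p_2\mapsto$ a point of $E$) and check each is closed, so that $\mathcal{A}_2\sigma_2\mathcal{A}_2$ is the complement of their union. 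A secondary point requiring care is the very first claim that the abstract inclusions $\Aut(\FF_i)\hookrightarrow\Bir(\PP^2)$ are morphisms of algebraic groups onto closed subgroups — one should invoke that $\Bir(\PP^2)_{\le 2}$ carries a variety structure and that conjugation by the fixed birational map $\varphi_i$ together with composition stays within bounded degree, so the image is the image of an algebraic group under an algebraic homomorphism, hence closed.
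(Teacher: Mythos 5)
Your treatment of items \ref{41}--\ref{44} is essentially sound and matches the paper: the paper simply cites \cite{Bla09} for \ref{41}--\ref{43}, and your argument for \ref{44} (the complement of $\mathcal{A}_0\sigma_3\mathcal{A}_0$ is the closed locus $\{\psi_0:\psi_0(p_0)\in C_1\cup C_2\}$, the preimage of a closed curve under the evaluation morphism) is exactly the paper's.

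Item \ref{45}, however, contains a genuine error: you have swapped the roles of $\sigma_1$ and $\sigma_2$. Since $\sigma_1$ has one proper base-point and $\sigma_2$ has two, and the third base-point of $\varphi_2^{-1}\psi_2\varphi_2$ is infinitely near precisely when $\psi_2(p_2)\in C$, the correct dictionary (recorded in Lemma~\ref{lem auto linear}~\ref{15}) is $\mathcal{A}_2\sigma_1\mathcal{A}_2=\{\psi_2:\psi_2(p_2)\in C\setminus\{p_2\}\}$ and $\mathcal{A}_2\sigma_2\mathcal{A}_2=\{\psi_2:\psi_2(p_2)\notin C\}$ --- not the other way round, as you can check on $\sigma_2$ itself, whose second proper base-point $[0:1:0]$ corresponds to a point of $\FF_2$ off the fibre $C$. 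With your identification you are led to assert that $\{\psi_2:\psi_2(p_2)\in C\}$ is open and that $\{\psi_2:\psi_2(p_2)\notin C\}$ is closed; both are false, since $\{\psi_2(p_2)\in C\}$ is the preimage of the curve $C$ under the evaluation morphism, hence a proper closed subset of the irreducible group $\Aut(\FF_2)$ and in particular not open, while the set $\{\psi_2(p_2)\in C\setminus\{p_2\}\}$ that you identify with $\mathcal{A}_2\sigma_2\mathcal{A}_2$ is only locally closed. Your auxiliary justification --- that $\{\psi_2(p_2)\notin C\}$ is cut out by the closed condition ``degree $1$ or has an infinitely near base-point'' --- also fails: every element of $\Aut(\FF_2)\setminus\Aut(\PP^2)$ has the infinitely near base-point $p$ over $[1:0:0]$, so that condition holds on all of $\Aut(\FF_2)$ and distinguishes nothing. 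Once the dictionary is corrected, \ref{45} is as immediate as \ref{44}: the complement of $\mathcal{A}_2\sigma_2\mathcal{A}_2$ is $\mathcal{A}_2\cup\mathcal{A}_2\sigma_1\mathcal{A}_2=\{\psi_2:\psi_2(p_2)\in C\}$, which is closed; this is precisely the paper's one-line proof.
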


\begin{proof}
\ref{41}, \ref{42} and \ref{43} are classical results, which for example can be found in \cite[Proposition~2.2.6, Th\'eor\`eme~2]{Bla09}.

\ref{44} By Lemma~\ref{lem auto linear} the set $\Aut(\FF_0)\setminus(\mathcal{A}_0\sigma_3\mathcal{A}_0)$ is the set of elements of $\Aut(\FF_0)$ that send the point $p_0$ into the curve $C_1\cup C_2$ and is therefore closed.

\ref{45} By Lemma~\ref{lem auto linear} the set $\Aut(\FF_2)\setminus(\mathcal{A}_2\sigma_2\mathcal{A}_2)$ is the set of element of $\Aut(\FF_2)$ that fix the curve $C$ and is therefore closed. 
\end{proof}

\begin{Rmk}\label{rmk nc} The Noether-Castelnuvo Theorem states that $\Bir(\PP^2)$ is generated by $\Aut(\PP^2)$ and $\sigma_3$ \cite{Cas} (see also \cite[\S8]{Alb02}). Furthermore, we can write $\sigma_3=\tau_{12}\sigma_2\tau_{12}\sigma_2$ where $\tau_{12}([x:y:z])=([y:x:z])$, hence the group $\Bir(\PP^2)$ is also generated by $\Aut(\PP^2)$ and $\sigma_2$. Therefore, for any $i=0,2$, the group $\Bir(\PP^2)$ is generated by its subgroups $\Aut(\PP^2)$ and $\Aut(\FF_i)$ and thus also generated by all three subgroups $\Aut(\PP^2)$, $\Aut(\FF_0)$ and $\Aut(\FF_2)$.

Note that Lemma~\ref{lem auto linear} in particular implies that all elements of $\Aut(\FF_0)\cup\Aut(\FF_2)$ are linear or quadratic. 
\end{Rmk}

\vskip\baselineskip

\begin{Def}
For a set $S$ let $F_S$ be the free group generated by $S$. 

\noindent For the set $S:=\Aut(\PP^2)\cup\Aut(\FF_0)\cup\Aut(\FF_2)\subset\Bir(\PP^2)$, define 
\[\mathfrak{G}:=F_{S}/\left\langle\begin{array}{ll} fgh^{-1},\quad \text{if}\ fg=h\ \text{in}\ \Aut(\PP^2)\\ fgh^{-1},\quad \text{if}\ fg=h\ \text{in}\ \Aut(\FF_0)\\fgh^{-1}, \quad \text{if}\ fg=h\ \text{in}\ \Aut(\FF_2)\\ \tau_{13}\sigma_3\tau_{13}\sigma_3\end{array}\right\rangle\]
where $\tau_{13}\in\Aut(\PP^2)$ is given by $\tau_{13}\colon[x:y:z]\mapsto[z:y:x]$.  
\end{Def}

\vskip\baselineskip

\begin{Rmk}The group $\mathfrak{G}$ is isomorphic to the free product of the three groups $\Aut(\PP^2),\Aut(\FF_2),\Aut(\FF_2)$ amalgamated along all the pairwise intersections (generalised amalgamated product of the three groups) modulo the relation $\tau_{13}\sigma_3=\sigma_3\tau_{13}$.

A geometric approach to generalised amalgamated products can be found in \cite{GHV90}, \cite{Ser77} and \cite{Sta90}. The generalised amalgamated product
$$F_S/ \left\langle\begin{array}{ll} fgh^{-1},\quad \text{if}\ fg=h\ \text{in}\ \Aut(\PP^2)\\ fgh^{-1},\quad \text{if}\ fg=h\ \text{in}\ \Aut(\FF_0)\\fgh^{-1}, \quad \text{if}\ fg=h\ \text{in}\ \Aut(\FF_2)\end{array}\right\rangle$$
is in \cite[\S1.3]{Sta90} the colimit of the diagram 
\[\xymatrix{&&\Aut(\PP^2)\ar@{-}[rd]\ar@{-}[ld]&&\\ &\mathcal{A}_0\ar@{-}[r]\ar@{-}[ld]&\mathcal{A}_0\cap\mathcal{A}_2&\mathcal{A}_2\ar@{-}[l]\ar@{-}[rd]\\ \Aut(\FF_0)\ar@{-}[rr]&&\Aut(\FF_0)\cap\Aut(\FF_2)\ar@{-}[rr]\ar@{-}[u]&&\Aut(\FF_2)}\]

Equivalently, it is the fundamental group of a 2-complex of groups. The vertices are $\Aut(\PP^2)$, $\Aut(\FF_0)$, $\Aut(\FF_2)$, the edges are their pairwise intersection and the 2-simplex is the group $\Aut(\PP^2)\cap\Aut(\FF_0)\cap\Aut(\FF_2)$ \cite[\S2.1, \S3.4]{GHV90}, \cite[4.4]{Ser77}.
\end{Rmk}

\begin{Rmk}\label{rmk w}
By Remark~\ref{rmk nc} there exists a canonical surjective homomorphism of groups 
\[\pi:\mathfrak{G}\rightarrow\Bir(\PP^2)\] 
and by definition of $\mathfrak{G}$ a natural map
\[ w\colon\Aut(\PP^2)\cup\Aut(\FF_0)\cup\Aut(\FF_2)\rightarrow\mathfrak{G}\] 
which sends an element to its corresponding word. Note that $\pi\circ w$ is the identity map.\end{Rmk} 

\section{Base-points, Multiplicities, de Jonquières}\label{2}

The methods we use mainly consist of studying linear systems of $\PP^2$ and their base-points. In this section we recall some definitions, notions and formulae which will be used almost constantly in Section 4 and 5, which have the aim to prove Theorem B (Theorem \ref{amalg}).

\begin{Def}
A {\em point over} $\PP^2$ is a point $p\in S$, where \mbox{$S:=S_{n+1}\stackrel{\nu_n}\rightarrow S_{n-1}\stackrel{\nu_{n-1}}\rightarrow\cdots\stackrel{\nu_1}\rightarrow S_0:=\PP^2$} is a sequence of blow-ups, and where we identify $p\in S$ with $p_i\in S_i$ if $\nu_{i+1}\cdots\nu_n\colon S\rightarrow S_i$ is a local isomorphism around $p$ sending $p$ to $p_i$. 

A point $p\in S$ over $\PP^2$ is {\em proper} if it is equivalent to a point $p'\in\PP^2$, and {\em infinitely near} otherwise. 
\end{Def}

\begin{Def}\label{def consistent}
Let $f\in\Bir(\PP^2)$ be a quadratic birational transformation and call $p_1,p_2,p_3$ its base-points and $q_1,q_2,q_3$ the base-points of $f^{-1}$. We say that {\it the base-points of $f$ are ordered consistently} if the following holds: The base-points of $f$ and of $f^{-1}$ are ordered such that
\begin{enumerate}
\item If $p_1,p_2,p_3$ are proper points of $\PP^2$ then all the lines through $p_1$ (respectively $p_2$, $p_3$) are sent onto lines through $q_1$ (respectively $q_2$, $q_3$). 
\item If $p_1,p_2$ are proper points of $\PP^2$ and $p_3$ is infinitely near to $p_1$, then all the lines through $p_1$ (respectively $p_2$) are sent onto lines through $q_1$ (respectively $q_2$). 
\item If $p_1$ is a proper point of $\PP^2$, $p_2$ infinitely near $p_1$ and $p_3$ infinitely near $p_2$ then the lines through $p_1$ are sent onto lines through $q_1$ and the exceptional curve associated to $p_3$ is sent onto the tangent associated to $q_2$.  
\end{enumerate}
\end{Def}

\begin{Rmk}
Writing down the blow-up diagram of the three quadratic involutions $\sigma_1,\sigma_2,\sigma_3$, we see that we can always order their base-points consistenly (for example for $\sigma_3$ the ordering $p_1=q_1=[1:0:0],p_2=q_2=[0:1:0],p_3=q_3=[0:0:1]$ is consistent). Since any quadratic birational transformation of $\PP^2$ can be written $\beta\sigma_i\alpha$ for some suitable $i\in\{1,2,3\}$, $\alpha,\beta\in\Aut(\PP^2)$ (Remark~\ref{rmk linsyst}), it is always possible to order its base-points consistently. 

Throughout the article, we will always assume that the base-points of a quadratic transformation of $\PP^2$ are ordered consistently.
\end{Rmk}

Let us remind of the following formula: Let $\Delta$ be a linear system and $f\in\Bir(\PP^2)$ a quadratic transformation with base-points $p_1,p_2,p_3$, and $q_1,q_2,q_3$ the base-points of $f^{-1}$. Let $a_i$ be the multiplicity of $\Delta$ in $p_i$ and $b_i$ the multiplicity of $f(\Delta)$ in $q_i$. If the base-points of $f$ are ordered consistently then 
\[\deg(f(\Delta))=2\deg(\Delta)-\varepsilon,\qquad b_i=\deg(\Delta)-\varepsilon+a_i\]
for $i=1,2,3$ and $\varepsilon=a_1+a_2+a_3$ \cite[\S4.2]{Alb02}.

\begin{Def}
We define 
$$J:=\{f\in\Bir(\PP^2):f\ \text{preserves the pencil of lines through}\ [1:0:0]\}.$$ The elements of $J$ are called {\it de Jonquières transformations}. 

A linear system $\Delta$ of $\PP^2$ of degree $\deg(\Delta)=d$ and with base-points $p_1,\dots,p_n$ of multiplicity $a_1,\dots,a_n$ is called {\it de Jonquières linear system} if it has multiplicity $d-1$ at $[1:0:0]$ and satisfies the conditions $d^2-1=\sum_{i=1}^na_i^2$ and  $3(d-1)=\sum_{i=1}^na_i$.

We call a base-point of $f$ a {\it simple base-point} if it is different from $[1:0:0]$ and denote the set of simple base-points by $\Bp(f)$. 
\end{Def}

\begin{Rmk}\label{rmk dJ}\item
(1) We have the following inclusions: $\Aut(\FF_2)\subset J$ and $\Aut(\FF_0)^0\subset J$, where $\Aut(\FF_0)^0$ is the connected component of $\Aut(\FF_0)$ containing $\Id$ and which is equal to $(\mathcal{A}\sigma_2\mathcal{A})\cup(\mathcal{A}\tau_{12}\sigma_2\tau_{12}\mathcal{A})\cup(\mathcal{A}\sigma_3\mathcal{A})\cup\mathcal{A}$, where $\mathcal{A}=\{\alpha\in\Aut(\PP^2)\cap\Aut(\FF_2)\mid\alpha([1:0:0])=[1:0:0]\}$ and $\tau_{12}\in\Aut(\PP^2)$ is given by $\tau_{12}\colon[x:y:z]\mapsto[y:x:z]$ (Lemma \ref{cor auto linear}).
 
(2) Any element of $f\in J\backslash\Aut(\PP^2)$ of degree $d$ has $2d-1$ base-points: the base-point $[1:0:0]$ of multiplicity $d-1$ and $2d-2$ other base-points of multiplicity one (this follows from the conditions on the degree and multiplicities). Thus the definition of simple base-point of $f$ is quite natural. If $f\in J$ is of degree $2$, it has exactly three base-points, all of multiplicity one. Its simple base-points are just the ones different from $[1:0:0]$.

(3) A de Jonquières linear system of $\PP^2$ of degree $d$ has $2d-1$ base-points and the multiplicity at any base-point different from $[1:0:0]$ is one. Such a point is called a {\it simple base-point} of $\Delta$.  Observe that for $f\in J$ and $\Delta$ a de Jonquières linear system, $f(\Delta)$ is a de Jonquières linear system, and the linear system of $f$ is a de Jonquières linear system.
\end{Rmk}

\begin{Lem}\label{lem dJ}
For any quadratic de Jonquières transformation $f\in\Bir(\PP^2)$ there exist $\alpha_1,\alpha_2\in\Aut(\PP^2)\cap J$, $\tau\in\{\sigma_1,\sigma_2,\tau_{12}\sigma_2\tau_{12},\sigma_3\}\subset\Aut(\FF_0)\cup\Aut(\FF_2)$, where $\tau_{12}\in\Aut(\PP^2)$ is given by $\tau_{12}:[x:y:z]\mapsto[y:x:z]$, such that $f=\alpha_2\tau\alpha_1$. 

In particular, $(\Aut(\PP^2)\cup\Aut(\FF_0)\cup\Aut(\FF_2))\cap\mathcal{J}$ generates $\mathcal{J}$. 
\end{Lem}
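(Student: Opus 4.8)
The plan is to classify the quadratic de Jonqui\`eres transformation $f$ by the configuration of its three base-points and, in each case, to exhibit the element of $\{\sigma_1,\sigma_2,\tau_{12}\sigma_2\tau_{12},\sigma_3\}$ whose base-point configuration \emph{matches} that of $f$ around $[1:0:0]$; the matching will force the conjugating automorphisms to fix $[1:0:0]$, hence to lie in $J$.

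First I would observe that, $f$ being quadratic and de Jonqui\`eres, $[1:0:0]$ is one of its three base-points $p_1,p_2,p_3$, and that (ordering the base-points of $f$ and of $f^{-1}$ consistently, Definition~\ref{def consistent}) we may take $p_1=q_1=[1:0:0]$, since $f$ maps the pencil of lines through $[1:0:0]$ to itself. By Remark~\ref{rmk linsyst}, the number $m\in\{1,2,3\}$ of proper base-points of $f$ in $\PP^2$ determines that $f=\beta\sigma_j\alpha$ with $\alpha,\beta\in\Aut(\PP^2)$, where $\alpha$ (resp. $\beta^{-1}$) carries the base-points of $f$ (resp. of $f^{-1}$) to those of $\sigma_j$, with $j=3,2,1$ according as $m=3,2,1$; moreover the configuration of a genuine quadratic transformation never has three ``collinear'' base-points, so in each case below an appropriate $\alpha$ with $\alpha([1:0:0])=[1:0:0]$ will be available inside the stabiliser of $[1:0:0]$ in $\Aut(\PP^2)$. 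I would also record that $\tau_{12}\sigma_2\tau_{12}=\sigma_3\sigma_2\in J$ by Remark~\ref{rmk nc}, so all four listed maps lie in $J\cap(\Aut(\FF_0)\cup\Aut(\FF_2))$.

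The core is then a four-way case distinction. If $m=3$: using the invariance of $\sigma_3$ under coordinate permutations, choose the matching $\alpha$ with $\alpha([1:0:0])=[1:0:0]$ and put $\tau=\sigma_3$. If $m=2$ and the infinitely near base-point of $f$ is infinitely near $[1:0:0]$: the configuration matches that of $\sigma_2$ (a proper point carrying an infinitely near point, plus a second proper point off the corresponding line), so the matching $\alpha$ is forced to fix $[1:0:0]$; put $\tau=\sigma_2$. If $m=2$ and the infinitely near base-point of $f$ is instead infinitely near the other proper base-point $p_2\neq[1:0:0]$: the configuration matches $\tau_{12}\sigma_2\tau_{12}$, whose infinitely near base-point is attached to $[0:1:0]$, so the matching $\alpha$ is again forced to send $[1:0:0]\mapsto[1:0:0]$ (and $p_2\mapsto[0:1:0]$); put $\tau=\tau_{12}\sigma_2\tau_{12}$. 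If $m=1$: the unique proper base-point of $f$ is $[1:0:0]$ and the other two form a height-three tower over it, matching $\sigma_1$; the matching $\alpha$ fixes $[1:0:0]$, and we put $\tau=\sigma_1$. In every case, setting $\alpha_1:=\alpha\in\Aut(\PP^2)\cap J$ and $\alpha_2:=f\alpha_1^{-1}\tau^{-1}$ gives $f=\alpha_2\tau\alpha_1$, where $\alpha_2\in\Aut(\PP^2)$ by Remark~\ref{rmk linsyst} and $\alpha_2\in J$ because $J$ is a group containing $f$, $\alpha_1$ and $\tau$; hence $\alpha_2\in\Aut(\PP^2)\cap J$, as required.

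For the final assertion it then suffices to invoke the classical fact that $J$ is generated by $\Aut(\PP^2)\cap J$ together with its quadratic elements --- which follows from the usual degree reduction: for $f\in J$ of degree $\geq 2$, composing with a suitable quadratic de Jonqui\`eres involution based at $[1:0:0]$ and a simple base-point of $f$ strictly lowers the degree (cf.\ \cite{Isk}). By the part already proved, each quadratic de Jonqui\`eres transformation is a product of three elements of $(\Aut(\PP^2)\cup\Aut(\FF_0)\cup\Aut(\FF_2))\cap J$, so this set generates $J$. The main obstacle I anticipate is the bookkeeping in the case $m=2$: one must check that the position of the infinitely near base-point relative to $[1:0:0]$ is exactly what selects $\sigma_2$ versus $\tau_{12}\sigma_2\tau_{12}$, and that precisely this selection is what makes the conjugating automorphism de Jonqui\`eres.
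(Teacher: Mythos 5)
Your proof is correct and takes essentially the same route as the paper: the same case division by the number of proper base-points, the same selection of $\tau\in\{\sigma_1,\sigma_2,\tau_{12}\sigma_2\tau_{12},\sigma_3\}$ according to where the infinitely near base-points sit (which forces the conjugating automorphisms to fix $[1:0:0]$), and the same appeal to the classical fact that $J$ is generated by its quadratic elements for the final assertion. One cosmetic slip: in your parenthetical sketch of that classical fact, the auxiliary quadratic de Jonqui\`eres map must share \emph{two} simple base-points with the linear system of $f$ in order to lower the degree (cf.\ Remark~\ref{deg dJ}), not just one; since you only cite the result rather than reprove it, this is harmless.
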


\begin{proof}
By Remark~\ref{def inverse}, we can write $f=\alpha_2\sigma_i\alpha_1$ for some $\alpha_1,\alpha_2\in\Bir(\PP^2)$ and $i$ is determined by the amount of proper base-points of $f$. Since $f$ is de Jonquières the point $[1:0:0]$ is a base-point of $f$. 

If $f$ has only one proper base-point in $\PP^2$, it has to be fixed by $\alpha_1$ and $\alpha_2$, which belong thus to $J$. This gives the result. 

Suppose that $f$ has exactly two proper base-points, namely $[1:0:0]$ and $p$. This implies that $\sigma_i=\sigma_2$, which has base-points $[1:0:0],[0:1:0]$ and a third one, infinitely near $[1:0:0]$. The base-point of $f$ which is not a proper point of $\PP^2$ is either infinitely near $[1:0:0]$ or $p$. If it is infinitely near $[1:0:0]$, then $\alpha_1,\alpha_2$ fix $[1:0:0]$ and are therefore de Jonquières. If it is infinitely near $p$ then $\alpha_1$ sends $p$ onto $[1:0:0]$ and $[1:0:0]$ onto $[0:1:0]$. We write $\alpha_1=\tau_{12}\beta_1$, $\alpha_2=\beta_2\tau_{12}$, for some $\beta_1,\beta_2\in\Aut(\PP^2)$, which means that $\beta_1$ fixes $[1:0:0]$, i.e. $\beta_1\in J\cap\Aut(\PP^2)$ and $f=\beta_2(\tau_{12}\sigma_2\tau_{12})\beta_1$. Since $f,\beta_1,\tau_{12}\sigma_2\tau_{12}\in J$, we have $\beta_2\in J$.

Suppose that $f$ has three proper base-points. For any $\theta\in\Aut(\PP^2)$ that permutes the coordinate $x,y,z$, we have $\theta\sigma_3\theta=\sigma_3$. Therefore, we can assume that $\alpha_1\in J$. Since $\sigma_3$ is de Jonqui\`eres, the map $\alpha_2$ has to be de Jonqui\`eres as well.

Since every element of $\mathcal{J}$ decomposes into quadratic elements of $\mathcal{J}$ \cite[Theorem 8.4.3]{Alb02}, Lemma~\ref{lem dJ} implies that $\mathcal{J}$ is generated by $(\Aut(\PP^2)\cup\Aut(\FF_0)\cup\Aut(\FF_2))\cap\mathcal{J}$ generates $\mathcal{J}$.
\end{proof}

\begin{Rmk}\label{deg dJ}
Suppose $\Delta$ is a de Jonquières linear system of degree $d$ and $f$ a quadratic de Jonquières transformation. We can say the following about the degree of $f(\Delta)$: Let $p_1=[1:0:0],p_2,p_3$ be the base-points of $f$ and $a_i$ be the multiplicity of $\Delta$ in $p_i$. Then $a_1=\deg(\Delta)-1$ and by the formula given above, we have
\[\deg(f(\Delta))=2d-(d-1)-a_2-a_3=d+1-a_2-a_3\]
Since $\Delta$ has one base-point of multiplicity $d-1$ and all the other base-points are of multiplicity one, we know that for $i=2,3$, $a_i$ is either zero or one. In fact, $a_i=0$ if $p_i$ is not a common base-point of $f$ and $\Delta$, and $a_i=1$ if $p_i$ is a common base-point of $f$ and $\Delta$. Thus the formula implies
\[\deg(f(\Delta))=\begin{cases} d+1,&\text{if}\ f\ \text{and}\ \Delta\ \text{have no common simple base-points}\\ d,&\text{if}\ f\ \text{and}\ \Delta\ \text{have exactly one common simple base-point}\\ d-1,&\text{if}\ f\ \text{and}\ \Delta\ \text{have exactly two common simple base-points}\end{cases}\]
Furthermore, if $p$ is a simple base-point of $\Delta$ that is not base-point of $f$, then $f^{\bullet}(p)$ (see definition below) is a simple base-point of $f(\Delta)$ \cite[\S4.1]{Alb02}.
\end{Rmk}

\begin{Def}
Let $f\in\Bir(\PP^2)$ and $p$ a point over the domain $\PP^2$ that is not a base-point of $f$. 
Take a minimal resolution of $f$
\[ \xymatrix{&S\ar[rd]^{\nu_2}\ar[dl]_{\nu_1}&\\ \PP^2\ar@{-->}[rr]^{f}&&\PP^2} \]
where $\nu_1,\nu_2$ are sequences of blow-ups. Let $p'\in S$ be a representative of $p$. We can see $p'$ as a point over the range $\PP^2$, and call it $f^{\bullet}(p)$.
\end{Def}

Lets look at an example to understand $f^{\bullet}(p)$ and $f(p)$:
\begin{Ex} 
Consider the standard quadratic involution $\sigma_3\in\Bir(\PP^2)$ and the point $p=[0:1:1]$, which is on the line $\{x=0\}$ contracted by $\sigma_3$ onto the point $[1:0:0]$, which means that $\sigma_3(p)=[1:0:0]$. 
The line $L=\{y=z\}$ passing through $p$ and $[1:0:0]$ is sent by $\sigma_3$ onto itself. By definition, $(\sigma_3)^{\bullet}(p)$ is the point in the first neighbourhood of $[1:0:0]$ corresponding to the tangent direction $\{y=z\}$. In conclusion, $\sigma_3(p)$ is a proper point of $\PP^2$, whereas $(\sigma_3)^{\bullet}(p)$ is not. The following picture (Figure~\ref{point}) illustrates the situation.
\end{Ex}

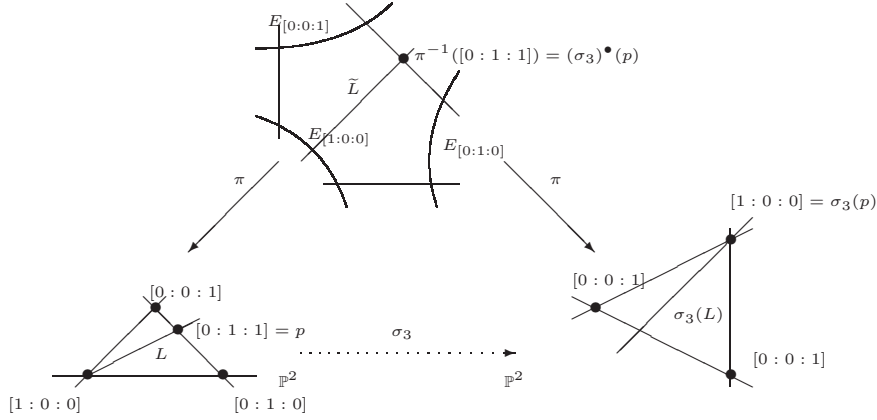
\begin{figure}[H]
\begin{center}
\begin{picture}(125,135)
\setlength{\unitlength}{3mm}
\put(-10,0.5){\line (1,0){9}}
\put(-8.75,0.25){\unboldmath$\bullet$}
\put(-12,-1){\tiny$[1:0:0]$}
\put(-2.75,0.25){\unboldmath$\bullet$}
\put(-2,-1){\tiny$[0:1:0]$}
\multiput(0,0)(10,0){2}{\tiny$\PP^2$}
\put(-9,0){\line (1,1){4}}
\put(-2,0){\line (-1,1){4}}
\put(-5.75,3.25){\unboldmath$\bullet$}
\put(-5.75,4){\tiny$[0:0:1]$}
\put(-4.75,2.25){\unboldmath$\bullet$}
\put(-3.7,2.3){\tiny$[0:1:1]=p$}
\put(-8.5,0.5){\line(2,1){5}}
\put(-5.5,1.25){\tiny$L$}
\qbezier[20](1,1.5)(5,1.5)(10,1.5)
\put(10,1.5){\vector(1,0){0.5}}
\put(5,2.25){\tiny$\sigma_3$}
\put(0,10){\vector(-1,-1){4}}
\put(10,10){\vector(1,-1){4}}
\multiput(-2,9)(14,0){2}{\tiny$\pi$}
\put(2,9){\line(1,0){6}}
\put(0,11){\line(0,1){5}}
\put(3,17){\line(1,-1){5}}
\qbezier(3,8)(2,11)(-1,12)
\qbezier(-1,15)(3,15)(5,17)
\qbezier(7,8)(6,11)(8,14)
\put(1.25,11){\tiny$E_{[1:0:0]}$}
\put(-0.5,16){\tiny$E_{[0:0:1]}$}
\put(7.25,10.5){\tiny$E_{[0:1:0]}$}
\put(5.25,14.25){\unboldmath$\bullet$}
\put(6,14.5){\tiny$\pi^{-1}([0:1:1])=(\sigma_3)^{\bullet}(p)$}
\put(6,15){\line(-1,-1){5}}
\put(3,13){\tiny$\widetilde{L}$}
\put(13,4){\line(2,-1){8}}
\put(13,3){\line(2,1){8}}
\put(20,7){\line(0,-1){7}}
\put(21,7.5){\line(-1,-1){6}}
\put(13.75,3.25){\unboldmath$\bullet$}
\put(19.75,0.25){\unboldmath$\bullet$}
\put(19.75,6.25){\unboldmath$\bullet$}
\put(13,4.5){\tiny$[0:0:1]$}
\put(21,1){\tiny$[0:0:1]$}
\put(20,8){\tiny$[1:0:0]=\sigma_3(p)$}
\put(17.5,3){\tiny$\sigma_3(L)$}
\end{picture}
\end{center}
\caption{The points $(\sigma_3)^{\bullet}(p)$ and $\sigma_3(p)$}\label{point}
\end{figure}

\begin{Rmk}
Note that $f^{\bullet}$ is a one-to-one correspondence between the sets
\begin{align*}&(\PP^2\cup\{\text{infinitely near points}\})\setminus\{\text{base-points of}\ f\}\quad\text{and}\\
&(\PP^2\cup\{\text{infinitely near points}\})\setminus\{\text{base-points of}\ f^{-1}\}\end{align*}
\end{Rmk}

\section{Basic relations in $\mathfrak{G}$}\label{3}

In this section, we present basic relations that hold in $\mathfrak{G}$ and which will be the backbone of the proof of Theorem A (Theorem~\ref{amalg}). We prove relations for words in $\mathfrak{G}$ of length three using properties of the elements of $\Aut(\FF_0)$, $\Aut(\FF_2)$ and $\Aut(\PP^2)$. 
(Lemma~\ref{lem degree 1,2}, Lemma~\ref{lem square dJ} and Lemma~\ref{lem deg 3 proper base-point}). They will then be used in the next section to prove that there exists an injective map $w_J\colon J\rightarrow\mathfrak{G}$ such that $\pi\circ w_J=\Id$ (Lemma~\ref{lem dJ Id}, Corollary~\ref{cor dJ Id}) which will enable us to prove Theorem A (Theorem~\ref{amalg}) using the result that $\Bir(\PP^2)$ is the amalgamated product of $\Aut(\PP^2)$ and $J$ modulo one relation \cite{Bla12} (Theorem~\ref{bla}).

Lemma~\ref{lem permutations} and \ref{lem degree 1,2} yield that words of length three in $\mathfrak{G}$ whose image in $\Bir(\PP^2)$ is linear or quadratic behave like their images in $\Bir(\PP^2)$. Lemma~\ref{lem square dJ} and \ref{lem deg 3 proper base-point} yield relations for words of length three whose image in $\Bir(\PP^2)$ is de Jonquières and of degree three.

\vskip\baselineskip

Define $\TAut(\PP^2)=D\rtimes S_3$, where $S_3\subset\Aut(\PP^2)$ is the image of the permutation matrices of $\GL_3$ and $D$ is the image of the three dimensional torus. We can check that the group $\TAut(\PP^2)$ is normalised by $\sigma_3$, and the automorphism of $\TAut(\PP^2)$ given by the conjugation of $\sigma_3$ will be denoted by $\iota$. Note that $\iota(\alpha)=\alpha$ for $\alpha\in S_3$ and $\iota(\delta)=\delta^{-1}$ for $\delta\in D$.

As subgroup of $\Aut(\PP^2)$, we can embed $\TAut(\PP^2)$ (as a set) into $\mathfrak{G}$ by the word map $w$. The next lemma shows that in $\mathfrak{G}$ the image of $\TAut(\PP^2)$ is normalised by $w(\sigma_3)$:

\begin{Lem}\label{lem permutations}
For any $(\delta,\alpha)\in D\rtimes S_3$ the relation $w(\delta\alpha)w( \sigma_3)=w(\sigma_3)w(\iota(\delta\alpha))$ holds in $\mathfrak{G}$. 
\end{Lem}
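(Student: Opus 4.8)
The relation to be established, $w(\delta\alpha)\,w(\sigma_3)=w(\sigma_3)\,w(\iota(\delta\alpha))$, is an identity in $\mathfrak{G}$, so it suffices to produce it from the defining relators of $\mathfrak{G}$: the group laws inside $\Aut(\PP^2)$, $\Aut(\FF_0)$, $\Aut(\FF_2)$, and the single relation $\tau_{13}\sigma_3\tau_{13}\sigma_3=1$. The plan is to reduce everything to this last relation together with conjugation relations that already live inside $\Aut(\PP^2)$. First I would treat the two factors of $D\rtimes S_3$ separately: since $w$ is multiplicative on $\Aut(\PP^2)$ (the first family of relators), $w(\delta\alpha)=w(\delta)w(\alpha)$ and likewise $\iota(\delta\alpha)=\iota(\delta)\iota(\alpha)=\delta^{-1}\alpha$, so it is enough to prove the two special cases $w(\alpha)w(\sigma_3)=w(\sigma_3)w(\alpha)$ for $\alpha\in S_3$, and $w(\delta)w(\sigma_3)=w(\sigma_3)w(\delta^{-1})$ for $\delta\in D$, and then concatenate them using the $\Aut(\PP^2)$-relators again.

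For $\alpha\in S_3$: every coordinate permutation is a product of the transpositions $\tau_{12},\tau_{13},\tau_{23}$, and by multiplicativity of $w$ on $\Aut(\PP^2)$ it suffices to check $w(\tau)w(\sigma_3)=w(\sigma_3)w(\tau)$ for each transposition $\tau$. For $\tau=\tau_{13}$ this is exactly the defining relation of $\mathfrak{G}$: the relator $\tau_{13}\sigma_3\tau_{13}\sigma_3$ gives $w(\tau_{13})w(\sigma_3)w(\tau_{13})w(\sigma_3)=1$, and since $w(\tau_{13})^2=w(\tau_{13}^2)=w(1)=1$ (again an $\Aut(\PP^2)$-relator, $\tau_{13}$ being an involution), this rearranges to $w(\tau_{13})w(\sigma_3)=w(\sigma_3)w(\tau_{13})$. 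For the other two transpositions I would exploit that $\sigma_3\in\Aut(\FF_0)$, and that $\tau_{12}\in\Aut(\FF_0)$ as well (Lemma~\ref{cor auto linear}~\ref{42}), with $\tau_{12}\sigma_3=\sigma_3\tau_{12}$ holding genuinely inside $\Aut(\FF_0)$ — indeed both are automorphisms of $\PP^1\times\PP^1$ and one checks they commute there; this commutation is then an $\Aut(\FF_0)$-relator, giving $w(\tau_{12})w(\sigma_3)=w(\sigma_3)w(\tau_{12})$ in $\mathfrak{G}$. Finally $\tau_{23}=\tau_{12}\tau_{13}\tau_{12}$ (a relation in $\Aut(\PP^2)$), so $w(\tau_{23})w(\sigma_3)=w(\sigma_3)w(\tau_{23})$ follows by combining the two cases already done.

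For $\delta\in D$: the torus $D$ is generated by the one-parameter subgroups scaling individual coordinates, and conjugation by $\sigma_3$ inverts a diagonal matrix $\mathrm{diag}(a,b,c)$ to $\mathrm{diag}(a^{-1},b^{-1},c^{-1})$ (up to scalar, hence equal in $\mathrm{PGL}_3$); I would note that the needed identity $\delta\sigma_3=\sigma_3\delta^{-1}$ is visible after conjugating by a permutation into a standard coordinate scaling and checking it there, but the cleanest route is to realise $\delta\in D\cap\Aut(\FF_0)$ after composing with a transposition already handled, so that $\delta\sigma_3=\sigma_3\iota(\delta)$ is literally an $\Aut(\FF_0)$-relator for those $\delta$ fixing $p_0$, and reduce the general torus element to this case using the $S_3$-part already proved. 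Assembling: write $\delta\alpha$, move $w(\sigma_3)$ past $w(\alpha)$ (the $S_3$ case), then past $w(\delta)$ (the torus case, picking up the inverse), and recombine $w(\delta^{-1})w(\alpha)=w(\delta^{-1}\alpha)=w(\iota(\delta\alpha))$ via an $\Aut(\PP^2)$-relator. The main obstacle is bookkeeping: making sure that each "obvious" commutation or inversion identity is actually one of the three licensed families of relators (i.e.\ really holds inside one of $\Aut(\PP^2)$, $\Aut(\FF_0)$, $\Aut(\FF_2)$ as a group equation, not merely in $\Bir(\PP^2)$), since in $\mathfrak{G}$ we are not allowed to use any relation that does not factor through one of these subgroups or through $\tau_{13}\sigma_3\tau_{13}\sigma_3$; verifying $\sigma_3,\tau_{12}\in\Aut(\FF_0)$ and that they commute there — which Lemma~\ref{lem auto linear}~\ref{13} and its proof supply explicitly via the coordinates on $\PP^1\times\PP^1$ — is the linchpin that makes the reduction legitimate.
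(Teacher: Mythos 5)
Your proposal is correct and follows essentially the same route as the paper: handle $\tau_{13}$ via the imposed relator $\tau_{13}\sigma_3\tau_{13}\sigma_3$, handle $\tau_{12}$ and the torus via relations holding genuinely inside $\Aut(\FF_0)$, and assemble using multiplicativity of $w$ on $\Aut(\PP^2)$. The only superfluous step is your detour for the torus: every diagonal element fixes the coordinate points, so all of $D$ already lies in $\mathcal{A}_0\subset\Aut(\FF_0)$ and $\delta\sigma_3\delta=\sigma_3$ is directly an $\Aut(\FF_0)$-relator, with no need to first conjugate by a transposition.
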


\begin{proof}
Let $\tau_{12}:[x:y:z]\mapsto[y:x:z]$. In $\Aut(\FF_0)$, the relation $\tau_{12}\sigma_3\tau_{12}=\sigma_3$ holds, hence the relation $w(\tau_{12})w(\sigma_3)=w(\sigma_3)w(\tau_{12})$ holds in $\mathfrak{G}$. By definition, $w(\tau_{13})w(\sigma_3)=w(\sigma_3)w(\tau_{13})$ is a relation in $\mathfrak{G}$, and $\tau_{13}$ and $\tau_{12}$ generate $S_3$. Therefore, the relation $w(\alpha)w(\sigma_3)=w(\sigma_3)w(\alpha)=w(\sigma_3)w(\iota(\alpha))$ holds in $\mathfrak{G}$ for any $\alpha\in S_3$. 

Let $\delta\in D$. The relation $\delta \sigma_3\delta= \sigma_3$ holds in $\Aut(\FF_0)$, hence $w(\delta)w(\sigma_3)=w(\sigma_3)w(\delta^{-1})=w(\sigma_3)w(\iota(\delta))$ holds in $\mathfrak{G}$. 
\end{proof}

Using Lemma~\ref{lem permutations}, we now show that for $f,g,h\in\Aut(\PP^2)\cup\Aut(\FF_0)\cup\Aut(\FF_2)$ and $\deg(fgh)\leq 2$, the word $w(f)w(g)w(h)$ behaves like the composition $fgh$:

\begin{Lem}\label{lem degree 1,2}
Let $g\in\Aut(\PP^2),h,f\in\Aut(\FF_0)\cup\Aut(\FF_2)$ such that 
$\deg(fgh)\in\{1,2\}$.
\begin{enumerate}
\item If $\deg(fgh)=1$, then $w(f)w(g)w(h)=w(fgh)$ in $\mathfrak{G}$.
\item If $\deg(fgh)=2$, there exist $\alpha,\beta\in\Aut(\PP^2)$, $\tilde{g}\in\Aut(\FF_0)\cup\Aut(\FF_2)$ such that $w(f)w(g)w(h)=w(\beta)w(\tilde{g})w(\alpha)$ in $\mathfrak{G}$.
\item If $\deg(fgh)=2$ and $f,g,h\in J$ then $\alpha,\beta,\tilde{g}$ can be chosen to be in $J$. 
\end{enumerate}
\end{Lem}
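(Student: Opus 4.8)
\textbf{Proof plan for Lemma~\ref{lem degree 1,2}.}

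The plan is to reduce everything to the three quadratic involutions $\sigma_1,\sigma_2,\sigma_3$ and the relations already built into $\mathfrak{G}$, plus Lemma~\ref{lem permutations}. First I would dispose of the trivial cases: if $f$, $g$ or $h$ is actually in $\Aut(\PP^2)$ then by Remark~\ref{rmk nc} (Lemma~\ref{lem auto linear} tells us every element of $\Aut(\FF_0)\cup\Aut(\FF_2)$ is linear or quadratic, and the linear ones lie in $\Aut(\PP^2)$) the product simplifies inside one of the three defining groups, so $w(f)w(g)w(h)$ collapses using the relations $f_1f_2f_3^{-1}$. Hence I may assume $f$ and $h$ are genuinely quadratic elements of $\Aut(\FF_0)\cup\Aut(\FF_2)$; write, using Lemma~\ref{lem auto linear}, $f=\beta_f\sigma_{i}\alpha_f$ and $h=\beta_h\sigma_{j}\alpha_h$ with $i,j\in\{1,2,3\}$ and the $\alpha$'s, $\beta$'s in $\Aut(\PP^2)$ (in the appropriate $\mathcal{A}_0$ or $\mathcal{A}_2$). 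By the definition of $\mathfrak{G}$, $w(f)=w(\beta_f)w(\sigma_i)w(\alpha_f)$ and similarly for $h$, so that
\[
w(f)w(g)w(h)=w(\beta_f)w(\sigma_i)w(\alpha_f g\beta_h)w(\sigma_j)w(\alpha_h)
\]
in $\mathfrak{G}$, where I have merged $w(\alpha_f)w(g)w(\beta_h)=w(\alpha_f g\beta_h)$ inside $\Aut(\PP^2)$. So the whole problem is reduced to understanding words of the form $w(\sigma_i)w(\ell)w(\sigma_j)$ with $\ell\in\Aut(\PP^2)$ and $\deg(\sigma_i\ell\sigma_j)\le 2$.

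Next I would handle this reduced problem. Since every $\sigma_k$ is $\Aut(\PP^2)$-conjugate to $\sigma_3$ (Remark~\ref{rmk linsyst}), and those conjugating automorphisms can be absorbed into $\ell$ and the outer linear factors, it suffices to treat $w(\sigma_3)w(\ell)w(\sigma_3)$; here one must be a little careful, because the conjugation $\sigma_k=\gamma\sigma_3\gamma'$ uses a relation valid in $\Aut(\FF_0)$ or $\Aut(\FF_2)$ only when $\sigma_k$ and $\sigma_3$ both lie in the same group — but for $\sigma_2$ versus $\sigma_3$ both lie in $\Aut(\FF_0)$, and for $\sigma_1$ versus $\sigma_2$ both lie in $\Aut(\FF_2)$, and $\sigma_2$ lies in both, so one can chain through these. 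Now $\deg(\sigma_3\ell\sigma_3)\le 2$ forces $\ell$ to send at least two of the three points $[1{:}0{:}0],[0{:}1{:}0],[0{:}0{:}1]$ to points of $\{[1{:}0{:}0],[0{:}1{:}0],[0{:}0{:}1]\}$ (this is the multiplicity/degree bookkeeping from Section~\ref{2}): if all three are preserved then $\ell\in\TAut(\PP^2)$, $\deg=1$, and Lemma~\ref{lem permutations} gives $w(\sigma_3)w(\ell)w(\sigma_3)=w(\ell')$ directly; if exactly two are preserved, after multiplying by an element of $S_3$ on each side I can assume $\ell$ fixes $[1{:}0{:}0]$ and $[0{:}1{:}0]$ and moves $[0{:}0{:}1]$, in which case $\sigma_3\ell\sigma_3$ is a quadratic de Jonqui\`eres map with a known linear system, and $\ell$ can be chosen in $\Aut(\FF_0)\cap\Aut(\FF_2)\cap J$ or at least in a subgroup where $\sigma_3\ell\sigma_3$ simplifies to $\beta'\sigma_k\alpha'$ via a relation holding inside $\Aut(\FF_0)$ (for instance the relation $\tau_{13}\sigma_3\tau_{13}=\sigma_3\tau_{13}\sigma_3\tau_{13}^{-1}$ from the defining relator, together with the torus relations), yielding the claimed form $w(\beta)w(\tilde g)w(\alpha)$.

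For part (3): at every step above, whenever $f,g,h\in J$ one checks the auxiliary automorphisms introduced stay in $J\cap\Aut(\PP^2)$ — the decomposition $f=\beta_f\sigma_i\alpha_f$ can be taken with $\alpha_f,\beta_f\in J$ by Lemma~\ref{lem dJ} (note $f\in\Aut(\FF_0)^0\cup\Aut(\FF_2)\subset J$ by Remark~\ref{rmk dJ}(1)), and the $S_3$-elements used in the reduction can be chosen to fix $[1{:}0{:}0]$ since $[1{:}0{:}0]$ is then a base-point of everything in sight — so the whole chain of relations takes place among de Jonqui\`eres elements and $\alpha,\beta,\tilde g$ come out in $J$. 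I expect the main obstacle to be the middle case of the reduced problem: verifying carefully, via the degree formula $\deg(f(\Delta))=2\deg(\Delta)-\varepsilon$ and the base-point bookkeeping of Section~\ref{2}, exactly which linear $\ell$ give $\deg(\sigma_3\ell\sigma_3)\le 2$, and then exhibiting the corresponding identity purely as a consequence of the relators of $\mathfrak{G}$ (rather than merely as an identity in $\Bir(\PP^2)$) — i.e., making sure each simplification of the word $w(\sigma_3)w(\ell)w(\sigma_3)$ is licensed by a relation that genuinely holds inside one of $\Aut(\PP^2)$, $\Aut(\FF_0)$, $\Aut(\FF_2)$ or is the distinguished relator.
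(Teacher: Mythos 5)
Your opening moves match the paper's: dispose of the case $f\in\Aut(\PP^2)$ or $h\in\Aut(\PP^2)$, write $f=\beta_f\sigma_i\alpha_f$ and $h=\beta_h\sigma_j\alpha_h$ via Lemma~\ref{lem auto linear}, absorb the linear factors into $g$, and reduce to a word $w(\sigma_i)w(\ell)w(\sigma_j)$ with $\ell\in\Aut(\PP^2)$. The gap is in the next step: you claim that ``every $\sigma_k$ is $\Aut(\PP^2)$-conjugate to $\sigma_3$'' and that it therefore suffices to treat $w(\sigma_3)w(\ell)w(\sigma_3)$. This is false, and Remark~\ref{rmk linsyst} says so explicitly: any quadratic map of the form $\gamma\sigma_3\gamma'$ with $\gamma,\gamma'\in\Aut(\PP^2)$ has three proper base-points, whereas $\sigma_1$ has one and $\sigma_2$ has two, so neither can be written in that form --- the index $i$ in $\beta\sigma_i\alpha$ is an invariant of the map. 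Your ``chaining'' through $\Aut(\FF_0)$ and $\Aut(\FF_2)$ does not repair this: the relations connecting $\sigma_2$ to $\sigma_3$ inside $\Aut(\FF_0)$ (e.g.\ $\sigma_3=\tau_{12}\sigma_2\tau_{12}\sigma_2$) are not conjugations by linear maps and do not convert $w(\sigma_i)w(\ell)w(\sigma_j)$ with $i$ or $j$ in $\{1,2\}$ into a word of the form $w(\sigma_3)w(\ell')w(\sigma_3)$. So precisely the configurations in which the common base-points of $f$ and $(gh)^{-1}$ involve infinitely near points are left untreated.

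Those configurations require a different argument, which is what the paper supplies. The dichotomy is on the nature of the common base-points $s,t$ of $f$ and $(gh)^{-1}$ (all three of them in the degree-$1$ case). If $t$ is infinitely near $s$, then necessarily $i,j\in\{1,2\}$, so $f,h\in\Aut(\FF_2)$; one then checks that $s=[1:0:0]$, that $t$ lies on the strict transform of $\{y=0\}$, and that $g$ must preserve $\{s,t\}$, hence $g\in\Aut(\FF_2)$ as well --- the whole identity then holds inside the single group $\Aut(\FF_2)$ and is a defining relation of $\mathfrak{G}$. Only when all common base-points are proper does one land in $\Aut(\FF_0)$ with $i,j\in\{2,3\}$, and there the normalization by elements of $\TAut(\PP^2)$ together with Lemma~\ref{lem permutations} works as you sketch for the $\sigma_3$--$\sigma_3$ case. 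Your handling of that last case and your remarks on part (3) (choosing the auxiliary permutations to fix $[1:0:0]$) are on the right track, but as written the proof covers only part of the required case analysis.
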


\begin{proof}
Suppose that $f\in\Aut(\PP^2)$ or $h\in\Aut(\PP^2)$. The first claim follows from the definition of $\mathfrak{G}$. The second and third claim follow by putting $\beta:=fg$ or $\alpha=gh$ if $\deg(f)=1$ or $\deg(h)=1$ respectively. 

Assume that $h\in\Aut(\FF_i)\backslash\Aut(\PP^2)$ and $f\in\Aut(\FF_j)\backslash\Aut(\PP^2)$. Since $\Aut(\FF_0)$ is generated by $\Aut(\PP^2)\cap\Aut(\FF_0)$ and $\sigma_2,\sigma_3$ and $\Aut(\FF_2)$ is generated by $\Aut(\PP^2)\cap\Aut(\FF_2)$ and $\sigma_1,\sigma_2$ (Lemma~\ref{lem auto linear}), we can write $f=\beta_2\sigma_k\alpha_2$ and $h=\beta_1\sigma_l\alpha_1$ for some $\alpha_1,\beta_1\in\Aut(\FF_i)\cap\Aut(\PP^2)$, $\alpha_2,\beta_2\in\Aut(\FF_j)\Aut(\PP^2)$ and $k,l\in\{1,2,3\}$. By replacing $g$ with \mbox{$\alpha_2g\beta_1$} in $\Aut(\PP^2)$, we can assume that $\alpha_2=\beta_1=\Id$ and hence $f=\beta_2\sigma_k$ and $h=\sigma_l\alpha_1$. It follows from Remark~\ref{rmk linsyst} that
\begin{align*}&\text{the base-points of}\ f\ \text{are exactly the base-points of}\ \sigma_k,\tag{$\ast$}\label{Bp}\\ 
&\text{the base-points of}\ h^{-1}\ \text{are exactly the base-points of}\ \sigma_l.\end{align*}

(i) Suppose that $\deg(fgh)=1$. Then $f$ and $(gh)^{-1}$ have exactly the same base-points, which are respectively the base-points of $\sigma_k$ and the image of the base-points of $\sigma_l$ by $g$. In particular, $f$, $(gh)^{-1}$ and hence also $\sigma_k$, $\sigma_l$ have the same amount of proper base-points in $\PP^2$. Since $\sigma_1$, $\sigma_2$, $\sigma_3$ have exactly one, two and three proper proper base-points, it follows that $\sigma_k=\sigma_l$.

If $k\in\{1,2\}$ the equation $\sigma_k=\sigma_l$, the fact that $f$, $(gh)^{-1}$ have the same base-points and (\ref{Bp}) imply that $g\in\Aut(\FF_2)\cap\Aut(\PP^2)$ and so $f,g,h\in\Aut(\FF_2)$. The definition of $\mathfrak{G}$ then implies \mbox{$w(f)w(g)w(h)=w(fgh)$}.

If $k=3$, the equation $\sigma_k=\sigma_l$, the fact that $f$, $(gh)^{-1}$ have the same base-points and (\ref{Bp}) imply that $g$ permutes the base-points of $\sigma_3$. Lemma~\ref{lem permutations} states that \mbox{$w(g)w(\sigma_3)=w(\sigma_3)w(\iota(g))$}. We get
\begin{align*}w(f)w(g)w(h)&=w(\beta_2\sigma_3)w(g)w(\sigma_3\alpha_1)\\&=w(\beta_2)w(\sigma_3)w(\sigma_3)w(\iota(g))w(\alpha_1)\\&=w(\beta_2)w(\iota(g))w(\alpha_1)=w(\beta_2\iota(g)\alpha_1)\\&=w(\beta_2\sigma_3g\sigma_3\alpha_1)=w(fgh).\end{align*}

(ii) Suppose $\deg(fgh)=2$, i.e. $f$ and $(gh)^{-1}$ have exactly two common base-points $s,t$, at least one of them being proper. Assume that $s$ is proper. 

If $t$ is infinitely near to $s$, (\ref{Bp}) implies that \mbox{$\{k,l\}\subset\{1,2\}$}, i.e. $f,h\in\Aut(\FF_2)$. Then (\ref{Bp}) and the fact that $t$ is infinitely near $s$ implies that $s=[1:0:0]$ and that $t$ lies on the strict transform of \mbox{$\{y=0\}$}. Then $s,t$ are base-points of both $h^{-1}$ and $(gh)^{-1}$ and it follows that \mbox{$g(\{s,t\})=\{s,t\}$}, thus $g\in\Aut(\FF_2)$. It follows that in $\Aut(\FF_2)$ (hence also in $\mathfrak{G}$) 
$$w(f)w(g)w(h)=w(\beta_2\sigma_k)w(g)w(\sigma_l\alpha_1)=w(\beta_2)w(\sigma_kg\sigma_l)w(\alpha_1).$$ 
Remark that any map contained in $\Aut(\FF_2)$ is de Jonquières (Remark~\ref{rmk dJ}), from which claim (iii) follows for this subcase. 

If $s$ and $t$ are both proper, (\ref{Bp}) implies that \mbox{$\{k,l\}\subset\{2,3\}$}, i.e. $f,h\in\Aut(\FF_0)$. Then (\ref{Bp}) yields that \mbox{$\{s,t\}\subset\{[1:0:0],[0:1:0],[0:0:1]\}$}. There exist $\alpha,\beta\in \TAut(\PP^2)$ such that 
\begin{align*}&\alpha(\{[1:0:0],[0:1:0]\})=g^{-1}(\{s,t\}),\quad \beta(\{s,t\})=\{[1:0:0],[0:1:0]\}\\ &\beta g\alpha([1:0:0])=[1:0:0],\quad\beta g\alpha([0:1:0])=[0:1:0].\end{align*}  
If $k=2$, we may choose $\beta=\Id$. If $l=2$, we may choose $\alpha=\Id$. We get 
\begin{align*}w(f)w(g)w(h)&=w(\beta_2 \sigma_k)w(\beta^{-1})w(\beta)w(g)w(\alpha)w(\alpha^{-1})w(\sigma_l\alpha_1)\\&\stackrel{\tiny{\text{Lem}\ref{lem permutations}}}=w(\beta_2\iota(\beta^{-1}))w(\sigma_k)w(\beta g\alpha)w(\sigma_l)w(\iota(\alpha^{-1})\alpha_1)\\&=w(\beta_2\iota(\beta^{-1}))w(\sigma_k\beta g\alpha\sigma_l)w(\iota(\alpha)\alpha_1)\end{align*} 
The claim follows with \mbox{$\alpha=\iota(\alpha^{-1})\alpha_1$}, \mbox{$\tilde{g}=\sigma_k(\tilde{\beta}g\tilde{\alpha})\sigma_l$}, \mbox{$\beta=\beta_1\iota(\beta^{-1})$}. 
It remains to prove claim (iii) for this sub case: If $f,g,h$ are de Jonquières, then \mbox{$g([1:0:0])=[1:0:0]$} and $[1:0:0]$ is a common base-point of $gf$ and $h^{-1}$. Choosing $\alpha,\beta$ above such that they fix $[1:0:0]$ (i.e. are de Jonquières) it follows that \mbox{$\tilde{g}=\beta g\alpha$} is de Jonquières. The maps $f$ and $h$ being de Jonquières implies that $\alpha_1,\beta_2$ are de Jonquières (Remark~\ref{rmk dJ}), hence $\iota(\alpha^{-1})\alpha_1$, $\beta_1\iota(\beta^{-1})$ and $\tilde{g}$ are de Jonquières. 
\end{proof}

The next two lemmata yield relations for words of length three whose image in $\Bir(\PP^2)$ is of degree three.

\begin{Lem}\label{lem square dJ}
Let $f\in(\Aut(\FF_0)\cup\Aut(\FF_2))\cap J$ be a quadratic transformation, $\alpha_1,\dots,\alpha_4\in \Aut(\PP^2)\cap J$ such that 
\begin{enumerate} 
\item $f$ is a local isomorphism at the simple base-points $q_2,q_3$ of $(\alpha_2\sigma_3\alpha_1)^{-1}$, 
\item $\Bp(\alpha_4\sigma_3\alpha_3)=\{f(q_2),f(q_3)\}$.
\end{enumerate}
Then
\begin{enumerate} 
\item The map $(\alpha_4\sigma_3\alpha_3)f(\alpha_2\sigma_3\alpha_1)$ is quadratic de Jonquières, 
\item $\Bp((\alpha_4\sigma_3\alpha_3)f(\alpha_2\sigma_3\alpha_1))=\left((\alpha_2\sigma_3\alpha_1)^{-1}\right)^{\bullet}(\Bp(f))$
\item there exist $\beta_1,\beta_3\in\Aut(\PP^2)\cap J$ and $\beta_2\in\{\sigma_2,\sigma_3,\tau_{12}\sigma_2\tau_{12}\}$ such that the following equation holds in $\mathfrak{G}$:
\[w(\alpha_4)w(\sigma_3)w(\alpha_3)w(f)w(\alpha_2)w(\sigma_3)w(\alpha_1)=w(\beta_3)w(\beta_2)w(\beta_1)\]
i.e. the following diagram corresponds to a relation in $\mathfrak{G}$:
\[\xymatrix{ \PP^2\ar@{-->}[rr]^{f}&&\PP^2\ar@{-->}[d]^{\alpha_4\sigma_3\alpha_3}\\ \PP^2\ar@{-->}[u]^{\alpha_2\sigma_3\alpha_1}\ar@{-->}[rr]_{\beta_3\beta_2\beta_1}&&\PP^2 }\]
\end{enumerate}
\end{Lem}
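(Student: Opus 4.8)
The plan is to track the linear system $\Delta$ of lines in $\PP^2$ through the composition, determining degrees and base-points at each stage, and then to use Lemma~\ref{lem degree 1,2} and Lemma~\ref{lem permutations} to turn the geometric decomposition into a relation in $\mathfrak{G}$. First I would study $g:=(\alpha_2\sigma_3\alpha_1)^{-1}$: it is a quadratic de Jonqui\`eres map with base-point $[1:0:0]$ and two simple base-points $q_2,q_3$ (consistently ordered). Using Remark~\ref{deg dJ} and the hypothesis that $f$ is a local isomorphism at $q_2,q_3$, I would first show that $\alpha_2\sigma_3\alpha_1$ sends the base-points of $f$ (which are three simple points, none equal to $[1:0:0]$, since $f\in J$ is quadratic) to points that are simple base-points of the linear system $(\alpha_2\sigma_3\alpha_1)(\Delta)$; equivalently, $\left((\alpha_2\sigma_3\alpha_1)^{-1}\right)^{\bullet}$ carries $\Bp(f)$ to well-defined points over the range $\PP^2$. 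This already pins down claim (ii) and shows the composite is quadratic de Jonqui\`eres: the middle map $f$ has exactly the three base-points needed to ``cancel'' against $\alpha_2\sigma_3\alpha_1$ on one side and against $\alpha_4\sigma_3\alpha_3$ on the other via hypothesis (2), so applying Remark~\ref{deg dJ} three times (degree goes $1\to 2\to 3\to 2\to\cdots$, with each quadratic map lowering or keeping degree according to common simple base-points) yields a quadratic de Jonqui\`eres map; membership in $J$ follows because every factor is in $J$.

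Next, for the relation in $\mathfrak{G}$: I would write $g_1:=\alpha_3 f\alpha_2\in\Bir(\PP^2)$, which is a quadratic de Jonqui\`eres transformation (all three factors are in $J$, and $f$ quadratic forces $\deg(g_1)\le 2$; it is exactly $2$ because $f$ and $\alpha_3,\alpha_2$ being automorphisms on the outside cannot drop the degree). By Lemma~\ref{lem dJ} we may write $g_1=\gamma_2\tau\gamma_1$ with $\gamma_1,\gamma_2\in\Aut(\PP^2)\cap J$ and $\tau\in\{\sigma_1,\sigma_2,\tau_{12}\sigma_2\tau_{12},\sigma_3\}$. But $\alpha_3 f\alpha_2\in\Aut(\FF_0)\cup\Aut(\FF_2)$ is a single factor once $f$ is in one of the two automorphism groups and $\alpha_2,\alpha_3$ normalize the relevant point data — so in fact the word $w(\alpha_3)w(f)w(\alpha_2)$ equals $w(\gamma_2)w(\tau)w(\gamma_1)$ in $\mathfrak{G}$ directly by the defining relations of $\mathfrak{G}$ (the relation $fg=h$ inside $\Aut(\FF_i)$). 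Substituting, the left-hand word becomes $w(\alpha_4)w(\sigma_3)w(\gamma_2)w(\tau)w(\gamma_1)w(\sigma_3)w(\alpha_1)$, and the problem reduces to a word of effective length five whose underlying composite is quadratic de Jonqui\`eres.

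From here I would apply Lemma~\ref{lem degree 1,2}(3) repeatedly to the de Jonqui\`eres sub-words: first absorb $w(\sigma_3)w(\gamma_2)$ and then $w(\gamma_1)w(\sigma_3)$ — or, more cleanly, observe that $\sigma_3\gamma_2\tau$ and $\tau\gamma_1\sigma_3$ each have composite degree $\le 3$, apply part (2) of Lemma~\ref{lem degree 1,2} where degrees drop and Lemma~\ref{lem permutations} to move permutation parts of $\gamma_1,\gamma_2$ past $\sigma_3$ when they lie in $\TAut(\PP^2)$, reducing the seven-letter word step by step to a three-letter word $w(\beta_3)w(\beta_2)w(\beta_1)$ with $\beta_1,\beta_3\in\Aut(\PP^2)\cap J$ and $\beta_2\in\{\sigma_2,\sigma_3,\tau_{12}\sigma_2\tau_{12}\}$. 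The main obstacle is the bookkeeping in this last reduction: one must check at each step that the intermediate composite is genuinely de Jonqui\`eres and of degree $\le 2$ so that Lemma~\ref{lem degree 1,2}(3) applies and keeps all auxiliary automorphisms in $J$, and that the middle generator never needs to be $\sigma_1$ (this is where hypotheses (1)–(2), forcing the relevant base-points to be proper or to match up, are essential — they prevent an infinitely-near configuration that would require $\sigma_1$). Verifying that $\pi$ sends the resulting word to $(\alpha_4\sigma_3\alpha_3)f(\alpha_2\sigma_3\alpha_1)$, hence that $\beta_3\beta_2\beta_1$ is the correct composite, is then immediate since $\pi\circ w=\Id$ and $\pi$ is a homomorphism.
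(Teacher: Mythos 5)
Your outline for parts (i) and (ii) is essentially the paper's argument (push the base-points of $f$ through $(\alpha_2\sigma_3\alpha_1)^{-1}$ using the local-isomorphism hypothesis and the degree formula of Remark~\ref{deg dJ}), but your strategy for part (iii) has a genuine gap at its central step. You propose to reduce the seven-letter word by applying Lemma~\ref{lem degree 1,2} to sub-words such as $w(\sigma_3)w(\gamma_2)w(\tau)$ and $w(\tau)w(\gamma_1)w(\sigma_3)$, conceding only that one must "check at each step that the intermediate composite is of degree $\le 2$ so that Lemma~\ref{lem degree 1,2} applies." But this check fails, and it fails for a structural reason: hypothesis (1) says $f$ is a local isomorphism at the simple base-points $q_2,q_3$ of $(\alpha_2\sigma_3\alpha_1)^{-1}$, so $f$ and $(\alpha_2\sigma_3\alpha_1)^{-1}$ share only the base-point $[1:0:0]$, whence $\deg\bigl(f(\alpha_2\sigma_3\alpha_1)\bigr)=3$. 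The corresponding triple therefore lies outside the scope of Lemma~\ref{lem degree 1,2}, which covers only composite degrees $1$ and $2$, and there is no defining relation of $\mathfrak{G}$ that lets you rewrite a degree-$3$ composite of two quadratics directly. Your fallback, Lemma~\ref{lem permutations}, is also unavailable in general since $\gamma_1,\gamma_2$ need not lie in $\TAut(\PP^2)$. (A secondary issue: your claim that $w(\alpha_3)w(f)w(\alpha_2)=w(\gamma_2)w(\tau)w(\gamma_1)$ holds "directly by the defining relations of $\mathfrak{G}$" needs justification; it works only after first splitting $f$ inside $\Aut(\FF_i)$ as $\beta\sigma_k\alpha$ with $\alpha,\beta\in\Aut(\FF_i)\cap\Aut(\PP^2)$ and then merging linear letters, and one must still check the resulting linear letters are de Jonqui\`eres.)

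The missing idea is an auxiliary subdivision. The paper chooses a quadratic de Jonqui\`eres map $\rho$ whose simple base-points are $q_2$ (a simple base-point of $(\alpha_2\sigma_3\alpha_1)^{-1}$) and $p_2$ (a simple base-point of $f$ that survives as a proper point); these are not collinear with $[1:0:0]$ precisely because of hypothesis (1). Then each of $\rho(\alpha_2\sigma_3\alpha_1)$, $\rho f^{-1}$, $(\alpha_4\sigma_3\alpha_3)f\rho^{-1}$ and $\rho(\alpha_2\sigma_3\alpha_1)(\beta_3\beta_2\beta_1)^{-1}$ shares a simple base-point with its partner and hence is quadratic, so the square decomposes into triangles every one of which has composite degree $\le 2$ and is handled by Lemma~\ref{lem degree 1,2}. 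Without introducing such a $\rho$ (or an equivalent device), the reduction you describe cannot be carried out. Your remark that hypotheses (1)–(2) force $\beta_2\ne\sigma_1$ is correct in spirit, but the actual reason is that the local-isomorphism hypothesis guarantees $(\,(\alpha_2\sigma_3\alpha_1)^{-1})^{\bullet}$ sends at least one simple base-point of $f$ to a proper point of $\PP^2$, so the composite has a simple proper base-point and Lemma~\ref{lem dJ} then excludes $\sigma_1$.
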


\begin{proof}
Define $\tau_1:=\alpha_2\sigma_3\alpha_1$ and $\tau_2:=\alpha_4\sigma_3\alpha_2$ and denote by $p_1=[1:0:0],p_2,p_3$ the base-points of $f$ and by $p_1,\bar{p}_2,\bar{p}_3$ the base-points of its inverse (ordered consistently, see Definition~\ref{def consistent}). 

Since $f$ is a local isomorphism at $q_2,q_3$ the map $f^{-1}$ is a local isomorphism at $f(q_2),f(q_3)$. Hence there exist simple base-points $p_i,\bar{p}_i$ of $f$,$f^{-1}$ respectively, either proper points of $\PP^2$ or infinitely near $p_1$, which do not lie on the lines contracted by $(\tau_1)^{-1}$ and $\tau_2$. Up to order, we can assume that $p_i=p_2$. Therefore, the points $\tilde{p}_2:=(\tau_1^{-1})^{\bullet}(p_2)$ and $\hat{p}_2:=(\tau_2)^{\bullet}(\bar{p}_2)$ are proper points of $\PP^2$.  

Observe that the map $\tau_2f\tau_1$ is de Jonquières of degree two having base-points $p_1,\tilde{p}_2,\tilde{p}_3:=(\tau_1^{-1})^{\bullet}(p_3)$ and its inverse having base-points $p_1,\hat{p}_2,\hat{p}_3:=(\tau_2)^{\bullet}(\bar{p}_3)$. Indeed, the map $f\tau_1$ is of degree three with base-points \mbox{$p_1,\tilde{p}_2,\tilde{p}_3,\tilde{q}_2,\tilde{q}_3$}, where $\tilde{q}_2,\tilde{q}_3$ are the simple base-points of $\tau_1$, and its inverse having base-points \mbox{$p_1,p_4,p_5,f(q_2),f(q_3)$}. Thus $\tau_2f\tau_1$ is de Jonquières of degree two with base-points $p_1,\tilde{p}_2,\tilde{p}_3$ and its inverse having base-points $p_1,\hat{p}_2,\hat{p}_3$ (by the formula given in Section 3).

Since $\tau_2f\tau_1$ has at least one simple proper base-point (namely $\tilde{p}_2$), Lemma~\ref{lem dJ} and \ref{rmk linsyst} imply that there exist $\beta_1,\beta_2\in\Aut(\PP^2)\cap J$ and $\beta_2\in\{\sigma_2,\sigma_3,\tau_{12}\sigma_2\tau_{12}\}$ such that $\tau_2f\tau_1=\beta_3\beta_2\beta_1$. 

It is left to prove that \mbox{$w(\alpha_4)w(\sigma_3)w(\alpha_3)w(f)w(\alpha_2)w(\sigma_3)w(\alpha_1)=w(\beta_3)w(\beta_2)w(\beta_1)$} in $\mathfrak{G}$. We will use Lemma~\ref{lem degree 1,2}, and for this we fill the diagram
\[\xymatrix{ \PP^2\ar@{-->}[rr]^{f}&&\PP^2\ar@{-->}[d]^{\tau_2}\\ \PP^2\ar@{-->}[u]^{\tau_1}\ar@{-->}[rr]_{\beta_3\beta_2\beta_1}&&\PP^2 }\]
with triangles corresponding to relations in $\mathfrak{G}$. 

The map $f$ is a local isomorphism at $q_2,q_3$ hence the three points $p_1,p_2,q_2$ are not collinear. Since moreover $p_1,q_2$ are both proper points of $\PP^2$ there exists a quadratic map $\rho\in\Bir(\PP^2)\cap J$ which has base-points $p_1,q_2,p_2$. The maps $\rho\tau_1$ and $\rho f^{-1}$ are quadratic de Jonquières maps with base-points $p_1,\tilde{q}_2,\tilde{p}_2$ and $p_1,\bar{p}_2,f(q_2)$ respectively. It follows that also the map $\rho\tau_1(\beta_3\beta_2\beta_1)^{-1}$ is quadratic. The situation is summarised in the following diagram, where all the arrows are quadratic maps and the points in the brackets are the simple base-points of the corresponding quadratic map:

\[\xymatrix{ \PP^2\ar@{-->}[rrrr]^f^(.2){[p_2,p_3]}^(.8){[\bar{p}_2,\bar{p}_3]}\ar@{-->}[rrd]_{\rho}^(.4){[p_2,q_2]} &&&& \PP^2 \ar@{-->}[dll]_(.4){[\bar{p}_2,f(q_2)]} \ar@{-->}[dd]^{\tau_2}^(.2){[f(q_2),f(q_3)]} \\
&&\PP^2&&\\
\PP^2 \ar@{-->}[uu]^{\tau_1}^(.8){[q_2,q_3]}^(.2){[\tilde{q}_2,\tilde{q}_3]}\ar@{-->}[urr]^(.35){[\tilde{q}_2,\tilde{p}_2]} \ar@{-->}[rrrr]^{\beta_3\beta_2\beta_1}_(.2){[\tilde{p}_2,\tilde{p}_3]}_(.8){[\hat{p}_2,\hat{p}_3]} &&&& \PP^2 \ar@{-->}[ull] } \]
Writing $\rho=\gamma_3\gamma_2\gamma_1$ for some $\gamma_1,\gamma_2,\gamma_3\in(\Aut(\PP^2)\cup\Aut(\FF_0)\cup\Aut(\FF_2))\cap J$, only $\gamma_2$ quadratic (possible by Lemma~\ref{lem dJ}), Lemma~\ref{lem degree 1,2} implies that each triangle in the above diagram corresponds to a relation in $\mathfrak{G}$, making the whole diagram correspond to a relation in $\mathfrak{G}$. 
\end{proof}

\vskip\baselineskip

\begin{Lem}\label{lem deg 3 proper base-point}
Let $f,h\in\Aut(\FF_0)\cup\Aut(\FF_2)$, $g\in\Aut(\PP^2)$, $f,g,h\in J$, and let $\Delta$ be a de Jonquières linear system. Assume that 
$$\deg(fgh)=3,\quad \deg(fgh(\Delta))<\deg(gh(\Delta)),\quad\deg(\Delta)\leq\deg(gh(\Delta))$$ 
and that $(gh)(\Delta)$ has a proper base-point different from $[1:0:0]$. Then there exist $\alpha_1,\dots,\alpha_7\in\left(\Aut(\PP^2)\cup\Aut(\FF_0)\cup\Aut(\FF_2)\right)\cap J$, $\alpha_1,\alpha_3,\alpha_5,\alpha_7\in\Aut(\PP^2)$, such that 
\begin{enumerate}
\item the following equation holds in $\mathfrak{G}$:  
$$w(f)w(g)w(h)=w(\alpha_7)\cdots w(\alpha_1)$$
i.e. the following diagram corresponds to a relation in $\mathfrak{G}$:
\[\xymatrix{&&(gh)(\Delta)\ar@{-->}[rrd]^{f}\ar@{-->}[lld]_{(gh)^{-1}}&& \\
\Delta\ar@{-->}[rd]^{\alpha_2\alpha_1}&&&& fgh(\Delta)\\
&\alpha_1\alpha_2(\Delta)\ar@{-->}[rr]^{\alpha_5\alpha_4\alpha_3}&&\alpha_5\cdots\alpha_1(\Delta)\ar@{-->}[ur]_{\alpha_7\alpha_6}&}\]
\item For $i=2,\dots,7$
$$\deg(\alpha_i\cdots\alpha_1(\Delta))<\deg((gh)(\Delta)).$$
\end{enumerate}
\end{Lem}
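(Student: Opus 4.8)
The strategy is to factor the degree-three de Jonquières map $fgh$ into a sequence of quadratic (or linear) de Jonquières maps, each of which strictly decreases the degree of the chosen linear system until we reach $\Delta$ again, and then to verify that each elementary piece corresponds to a relation in $\mathfrak{G}$ using the length-three relations already established. By Lemma~\ref{lem degree 1,2} and Lemma~\ref{lem square dJ}, once $fgh$ is written as a composite of quadratic de Jonquières maps organised into triangles whose edges lie in $(\Aut(\PP^2)\cup\Aut(\FF_0)\cup\Aut(\FF_2))\cap J$, each triangle will correspond to a relation in $\mathfrak{G}$, and composing the triangles yields the desired word identity. So the real content is a base-point bookkeeping argument showing that such a decomposition exists with all intermediate linear systems of degree strictly less than $\deg((gh)(\Delta))$.

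First I would set $\Gamma:=(gh)(\Delta)$, which by Remark~\ref{rmk dJ}(3) is again a de Jonquières linear system, of degree $d:=\deg(\Gamma)>\deg(fgh(\Delta))$ and $d\geq\deg(\Delta)$. By hypothesis $\Gamma$ has a proper simple base-point $r\neq[1:0:0]$, and since $f$ is a quadratic de Jonquières map with $\deg(f(\Gamma))<\deg(\Gamma)=d$, Remark~\ref{deg dJ} forces $f$ and $\Gamma$ to have at least one (in fact, depending on the drop, one or two) common simple base-points. The key move is to choose a quadratic de Jonquières map $\rho$ (as in the proof of Lemma~\ref{lem square dJ}) whose simple base-points are $[1:0:0]$-compatible and include $r$ together with a base-point of $f$, so that $\rho(\Gamma)$ has degree $d$ or $d-1$ and, crucially, still admits enough proper structure to continue. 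I would then factor $gh$ through $\rho$: writing $\alpha_2\alpha_1$ for the quadratic-times-linear expression of $(gh)\rho^{-1}$ resolved via Remark~\ref{rmk linsyst} and Lemma~\ref{lem dJ}, and similarly resolving the remaining maps $f\rho^{-1}$ and the residual composite into quadratic de Jonquières pieces $\alpha_3,\dots,\alpha_7$, always arranging by Lemma~\ref{lem dJ} that the odd-indexed $\alpha_i$ are linear and the even-indexed ones are the fixed quadratic involutions in $\Aut(\FF_0)\cup\Aut(\FF_2)$.

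The degree control in part (ii) is the heart of the matter: I must check that inserting $\rho$ genuinely lowers degrees and that no intermediate linear system in the factorisation exceeds $\deg(\Gamma)-1$. This uses the degree formula of Remark~\ref{deg dJ} repeatedly — each quadratic de Jonquières map changes the degree by at most one, it decreases it precisely when it shares two simple base-points with the current system, and the point is to pick $\rho$ and the subsequent factors so that the chain $\Delta\to\alpha_1\alpha_2(\Delta)\to\cdots\to\Gamma$ never climbs back to degree $d$. Here the hypotheses $\deg(\Delta)\leq d$ and the existence of the proper base-point $r\neq[1:0:0]$ of $\Gamma$ are exactly what let us start the descent, and the inequality $\deg(fgh(\Delta))<d$ guarantees the chain can be closed up on the $f$-side. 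Once the combinatorial picture is in place, the passage to $\mathfrak{G}$ is automatic: every triangle in the displayed diagram has two sides of degree $\leq 2$ and one of degree $\leq 3$ factoring as $\alpha\sigma_3\alpha'$, so Lemma~\ref{lem degree 1,2} and Lemma~\ref{lem square dJ} apply verbatim to turn it into a relation in $\mathfrak{G}$, and gluing the triangles gives $w(f)w(g)w(h)=w(\alpha_7)\cdots w(\alpha_1)$.

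The main obstacle I anticipate is the existence and placement of $\rho$: one must ensure that the auxiliary quadratic map $\rho$ can be chosen de Jonquières (i.e. with $[1:0:0]$ as a base-point) with simple base-points avoiding the indeterminacy loci of $f$ and of $(gh)^{-1}$ in the right way, so that all the composites appearing really are quadratic (degree exactly two, not three) and the bound in (ii) holds at every stage. This is a case analysis on whether $f$ and $\Gamma$ share one or two simple base-points, on whether those base-points are proper or infinitely near $[1:0:0]$, and on which Hirzebruch group $f$ and $h$ belong to — but in each case the three-points-not-collinear argument from the proof of Lemma~\ref{lem square dJ}, together with the freedom to post-compose with automorphisms fixing $[1:0:0]$, should produce the required $\rho$.
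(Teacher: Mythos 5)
Your overall strategy is the right one and matches the paper's: insert auxiliary quadratic de Jonqui\`eres maps through $[1:0:0]$, the proper simple base-point of $(gh)(\Delta)$, and base-points of $f$ and $(gh)^{-1}$; use the multiplicity inequality $m([1:0:0])+m(s)+m(r)>d$ to get non-collinearity and hence existence of these maps; then triangulate the resulting diagram with Lemma~\ref{lem dJ} and Lemma~\ref{lem degree 1,2}. But there is a concrete gap in your central construction. You propose a \emph{single} auxiliary map $\rho$ whose simple base-points are the proper point $r$ of $\Gamma=(gh)(\Delta)$ together with a base-point of $f$. With that choice $f\rho^{-1}$ is indeed quadratic, but $\rho\circ(gh)$ is not: since $\deg(fgh)=3$, the only common base-point of $f$ and $(gh)^{-1}$ is $[1:0:0]$, so in the generic case where $r$ is not itself a base-point of $(gh)^{-1}$ the maps $\rho$ and $(gh)^{-1}$ share only the point $[1:0:0]$ and $\deg(\rho\circ gh)=3$. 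Then Lemma~\ref{lem degree 1,2} does not apply to that triangle and the scheme collapses — this is exactly the "main obstacle" you flag at the end but do not resolve. The resolution is to use \emph{two} auxiliary quadratic de Jonqui\`eres maps $\rho_1,\rho_2$, both with base-points $[1:0:0]$ and $s$, and with third base-point a simple base-point of $(gh)^{-1}$ lying on $\Gamma$ (for $\rho_1$) respectively a simple base-point of $f$ (for $\rho_2$). Then $\rho_1\circ(gh)$, $\rho_2\circ\rho_1^{-1}$ and $f\circ\rho_2^{-1}$ are all quadratic (each pair of consecutive maps shares two base-points), and $\deg(\rho_i(\Gamma))=d-1$ for $i=1,2$ by Remark~\ref{deg dJ}; this is what produces the three quadratic arrows $\alpha_2\alpha_1$, $\alpha_5\alpha_4\alpha_3$, $\alpha_7\alpha_6$ in the displayed diagram. (When $s$ already is a base-point of $f$ or of $(gh)^{-1}$ one auxiliary map suffices, which is the easier case.)

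Two smaller points you should make explicit. First, the existence of a simple base-point of $(gh)^{-1}$ of multiplicity $1$ on $\Gamma$ is not automatic: it follows from the hypothesis $\deg(\Delta)\le\deg(gh(\Delta))$ via Remark~\ref{deg dJ}, and similarly $\deg(fgh(\Delta))<d$ forces \emph{both} simple base-points of $f$ to lie on $\Gamma$ (the drop is by exactly one). Second, the third base-point of each $\rho_i$ must be chosen either proper or infinitely near $[1:0:0]$ (never infinitely near some other point), as otherwise no quadratic map with the prescribed base-point triple exists; the degree hypotheses guarantee such a choice is possible.
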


\begin{proof}
The equality $\deg(fgh)=3$ implies that $f$ and $(gh)^{-1}$ have exactly one common base-point, namely $p_1=[1:0:0]$. Denote \mbox{$\Bp((gh)^{-1})=\{p_2,p_3\}$}, \mbox{$\Bp(f)=\{p_4,p_5\}$} the simple base-points of $(gh)^{-1}$ and $f$ and write $d=\deg(gh(\Delta))$. 

By assumption, $gh(\Delta)$ is a de Jonquières linear system which has a proper base-point $s$ different from $p_1$. For any point $r$, let $m(r)$ be the multiplicity of $gh(\Delta)$ in $r$ respectively. Then \mbox{$m(p_1)=d-1$} and \mbox{$m(s)=1$} (Remark~\ref{rmk dJ}), and Remark~\ref{deg dJ} implies that because $\deg(\Delta)\leq d$ and $\deg(fgh(\Delta))<d$, we have (up to ordering of $p_2,p_3$)
\begin{align*}&m(p_2)=1,\ m(p_3)\leq 1\\&\deg(fgh(\Delta))=d-1,\quad m(p_4)=m(p_5)=1\end{align*}

We will now construct $\alpha_1,\dots,\alpha_7$.

Assume that $s\in\{p_2,p_3,p_4,p_5\}$. If \mbox{$s\in\{p_2,p_3\}$}, we choose \mbox{$r\in\{p_4,p_5\}$}. If \mbox{$s\in\{p_4,p_5\}$}, we choose \mbox{$r\in\{p_i:i=2,3\ \text{and}\ m(p_i)=1\}$.} We choose $r$ to be infinitely near $p_1$ or a proper point (this is always possible). The points $p_1,s,r$ are not aligned, because \mbox{$a_1+m(s)+m(r)>d$}, thus there exist $\rho\in\Bir(\PP^2)$ quadratic de Jonquières with base-points \mbox{$p_1,r,s$}. The following commutative diagram, where the points in the brackets are the base-points of the corresponding map, summarises the situation:
\[\xymatrix{&&gh(\Delta)\ar@{-->}[d]^{\rho}_{[p_1,s,r]}\ar@{-->}[drr]^{f}^(.25){[p_1,p_4,p_5]}\ar@{-->}[dll]_(.6){(gh)^{-1}}_(.3){[p_1,p_2,p_3]}&&\\ 
\Delta\ar@{-->}[rr]&&\rho gh(\Delta)\ar@{-->}[rr]&&fgh(\Delta)}\]
Using Remark~\ref{deg dJ}, we obtain
\begin{align*}&\deg(\rho gh)=\deg(f\rho^{-1})=2,\\ 
&\deg(\rho gh(\Delta))=d-1<\deg(gh(\Delta))\end{align*} 
We write $\rho=\gamma\tilde{\rho}\delta$, $\rho gh=\alpha_3\alpha_2\alpha_1$, $f\rho^{-1}=\alpha_6\alpha_5\alpha_4$ where $\delta,\gamma,\alpha_1,\cdots,\alpha_6\in\left(\Aut(\PP^2)\cup\Aut(\FF_0)\cup\Aut(\FF_2)\right)\cap J$, only $\tilde{\rho},\alpha_2,\alpha_5$ quadratic (Lemma~\ref{lem dJ}). By Lemma~\ref{lem degree 1,2}, the above diagram is generated by relations in $\mathfrak{G}$. Hence $w(f)w(g)w(h)=w(\alpha_6)\cdots w(\alpha_1)$ in $\mathfrak{G}$.

Assume \mbox{$s\notin\{p_2,p_3,p_4,p_5\}$}, we choose \mbox{$r_1\in\{p_i:i=2,3\ \text{and}\ m(p_i)=1\}$}, \mbox{$r_2\in\{p_4,p_5\}$} such that $r_1$ (respectively $r_2$) is either a proper point or infinitely near $p_1$ (this is always possible). For $i=1,2$, the points $p_1,s,r_i$ are not collinear, because \mbox{$a_1+m(s)+m(r_i)>d$}. Thus there exist $\rho_1,\rho_2\in\Bir(\PP^2)$ quadratic de Jonquières with base-points $p_1,s,r_1$ and $p_1,s,r_2$ respectively. The following commutative diagram, where the brackets are the base-points of the corresponding map, summarises the situation:
\[\xymatrix{&&gh(\Delta)\ar@{-->}[lldd]_{(gh)^{-1}}_(.2){[p_1,p_2,p_3]}\ar@{-->}[ddl]^{\rho_1}|(.4){[p_1,s,r_1]}\ar@{-->}[ddr]_{\rho_2}|(.4){[p_1,s,r_2]}\ar@{-->}[ddrr]^{f}^(.2){[p_1,p_4,p_5]}&&\\
&&&&\\
\Delta\ar@{-->}[r]&\rho_1gh(\Delta)\ar@{-->}[rr]&&\rho_2gh(\Delta)\ar@{-->}[r]&fgh(\Delta)}\]
Using Remark~\ref{deg dJ} we obtain
\begin{align*}&\deg(\rho_1gh)=\deg(\rho_2\rho_1^{-1})=\deg(f\rho_2^{-1})=2,\\ 
&\deg(\rho_1gh(\Delta))=d-1<\deg(gh(\Delta))\\ &\deg(\rho_2gh(\Delta))=d-1<\deg(gh(\Delta))\end{align*}
We write $\rho_1=\gamma_1\tilde{\rho}_1\beta_1$, $\rho_2=\gamma_2\tilde{\rho_2}\beta_2$, $\rho_1gh=\alpha_3\alpha_2\alpha_1$, $\rho_2\rho_1^{-1}=\alpha_6\alpha_5\alpha_4$, $f\rho_2^{-1}=\alpha_9\alpha_8\alpha_7$ for $\alpha_1,\dots,\alpha_9,\beta_1,\beta_2,\gamma_1,\gamma_2,\tilde{\rho}_1,\tilde{\rho}_2\in(\Aut(\PP^2)\cup\Aut(\FF_0)\cup\Aut(\FF_2))\cap J$, only $\alpha_2,\alpha_5,\alpha_8, \tilde{\rho}_1,\tilde{\rho}_2$ quadratic (Lemma~\ref{lem dJ}). Lemma~\ref{lem degree 1,2} implies that all triangles of the above diagram are generated by relations in $\mathfrak{G}$ and thus $w(f)w(g)w(h)=w(\alpha_9)\cdots w(\alpha_1)$ in $\mathfrak{G}$. We obtain the $\alpha_i$'s in the claim by merging neighbour automorphisms of $\PP^2$ in the product $\alpha_9\cdots\alpha_1$.
\end{proof}

\section{The Cremona group is isomorphic to $\mathfrak{G}$}\label{4}

In this section we prove Theorem B (Theorem~\ref{amalg}). The main tool will be Lemma~\ref{lem dJ Id} which yields the existence of an injective map $w_J\colon J\rightarrow\mathfrak{G}$ such that $\pi\circ w_J=\Id$ (Corollary~\ref{cor dJ Id}) and enables us to use the result (Theorem~\ref{bla}) of \cite{Bla12}, that $\Bir(\PP^2)$ is isomorphic to the amalgamated product of $\Aut(\PP^2)$ and $J$ along their intersection modulo one relation, for the proof of Theorem B (Theorem~\ref{amalg}).

\begin{Lem}\label{lem dJ Id}
Let $f_1,\dots,f_n\in\left(\Aut(\PP^2)\cup\Aut(\FF_0)\cup\Aut(\FF_2)\right)\cap J$ such that $f_n\cdots f_1=\Id$. Then $w(f_n)\cdots w(f_1)=\Id$ in $\mathfrak{G}$. 
\end{Lem}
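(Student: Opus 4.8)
The statement asserts that any factorization of the identity in $\Bir(\PP^2)$ into linear and quadratic elements of $\Aut(\PP^2)\cup\Aut(\FF_0)\cup\Aut(\FF_2)$ that lie in the de Jonqui\`eres group $J$ becomes trivial in $\mathfrak{G}$. The plan is to argue by induction on a complexity measure attached to the word $w(f_n)\cdots w(f_1)$, reducing each time to a shorter word representing $\Id$. The natural complexity is the maximal degree of the intermediate linear systems: fix a generic de Jonqui\`eres linear system $\Delta$ (say the system of lines, or a system of high degree chosen so that no $f_i$ contracts a component), set $\Delta_0=\Delta$ and $\Delta_i=f_i(\Delta_{i-1})$, so that $\Delta_n=\Delta_0$ since $f_n\cdots f_1=\Id$, and let $D=\max_i\deg(\Delta_i)$. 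If $D=\deg(\Delta)$, then every intermediate map is degree-nonincreasing on $\Delta$; combined with $\Delta_n=\Delta_0$ this forces every $f_i$ to be an automorphism of $\PP^2$ (up to rewriting inside a single $\Aut(\FF_j)$ using Lemma~\ref{lem degree 1,2}), and the relation $w(f_n)\cdots w(f_1)=\Id$ then follows from the group relations of $\Aut(\PP^2)$ built into $\mathfrak{G}$.

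For the inductive step, suppose $D>\deg(\Delta)$ and pick an index $i$ with $\deg(\Delta_i)=D$ maximal; then $\Delta_{i-1}=f_i^{-1}\cdots$ has smaller degree, $\Delta_{i+1}$ has degree $\le D$, and crucially $\deg(\Delta_{i+1})<D$ or (if there is a plateau) one can look at the first strict decrease after the plateau. The idea is to apply the degree-three relations of Section~\ref{3} to a three-term subword $w(f_{j+1})w(f_j)w(f_{j-1})$ straddling the peak. One distinguishes cases according to $\deg(f_{j+1}f_jf_{j-1})$: the degree $\le 2$ case is handled directly by Lemma~\ref{lem degree 1,2}, the degree-three case with a proper base-point of the middle linear system by Lemma~\ref{lem deg 3 proper base-point}, and the remaining degree-three case (all base-points of the relevant system infinitely near $[1:0:0]$) by Lemma~\ref{lem square dJ} after first conjugating to expose a suitable proper base-point via an auxiliary quadratic de Jonqui\`eres map. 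In each case one rewrites the subword as $w(\beta_3)w(\beta_2)w(\beta_1)$ (or a product of such) \emph{equal in $\mathfrak{G}$}, where the new intermediate linear systems all have degree strictly less than $D$; this lowers either $D$ itself or the number of peaks achieving $D$. After finitely many such replacements the word represents $\Id$ with all intermediate degrees equal to $\deg(\Delta)$, and the base case applies.

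The main obstacle is ensuring that the combinatorics of the induction actually terminate: a single application of a Section~\ref{3} lemma replaces three (or fewer) factors by up to seven factors, so the word length is not a valid complexity, and one must check that the pair $(D,\,\#\{i:\deg(\Delta_i)=D\})$ decreases lexicographically — this is exactly what clause (2) of Lemma~\ref{lem deg 3 proper base-point} and the degree bounds in Lemma~\ref{lem square dJ} and Remark~\ref{deg dJ} are designed to guarantee, so the bookkeeping is delicate but the needed estimates are already in hand. A secondary subtlety is the choice of $\Delta$: one needs it generic enough that none of the contractions of the $f_i$ meet its base-points in an unexpected way (so that the degree formula of Section~\ref{2} applies cleanly at each step and so that $\Delta$ genuinely detects the peak), and generic enough that the auxiliary quadratic maps $\rho$ produced in the degree-three arguments exist with the prescribed base-points; both can be arranged by taking $\Delta$ of sufficiently large degree with base-points in general position, and one must verify this does not interfere with the hypothesis $f_n\cdots f_1=\Id$.
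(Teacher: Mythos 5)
Your overall strategy coincides with the paper's: rewrite the word in alternating linear/quadratic form, track the degrees $d_i$ of the intermediate linear systems obtained by pushing forward the system of lines, induct on a lexicographic pair (maximal degree $D$, position or multiplicity of the peak), and resolve a peak by Lemma~\ref{lem degree 1,2} when the three-term subword has degree $\le 2$, by Lemma~\ref{lem deg 3 proper base-point} when it has degree $3$ and the peak system has a simple proper base-point, and by a conjugation trick otherwise. (The paper takes $\Delta_0$ to be the system of lines rather than a generic high-degree system, and uses $N=\max\{i: d_i=D\}$ rather than the number of indices attaining $D$, but these are cosmetic differences.)

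The one place where your sketch, taken literally, would fail is the remaining (``bad'') case, where the peak system $\Delta_N$ has no simple proper base-point. You propose to conjugate by \emph{an} auxiliary quadratic de Jonqui\`eres map to expose a proper base-point. But if $\tau$ is quadratic with base-points in general position, then $\deg(\tau(\Delta_N))=D+1$ by Remark~\ref{deg dJ}, so any local insertion of $\tau^{-1}\tau$ near the peak strictly increases the maximal degree and destroys the induction. The paper's resolution is global: choose two general points $p_0,q_0$, transport them around the entire loop to get $p_i,q_i$ at every stage (with $p_m=p_0$, $q_m=q_0$ because the composition is the identity), form quadratic involutions $\tau_i$ with base-points $[1:0:0],p_i,q_i$ (so that $\tau_m=\tau_0$), and replace every factor $\alpha_{i+1}g_i$ by $\tau_i(\alpha_{i+1}g_i)\tau_{i-1}^{-1}$, each such square being a relation in $\mathfrak{G}$ by Lemma~\ref{lem square dJ}. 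Because the base-points are general, the rewritten word has intermediate systems of exactly the same degrees $d_i$ (computed again from the system of lines, not from $\tau_0(\Delta_0)$), so the pair $(D,N)$ is unchanged, while the new peak system does acquire a simple proper base-point, coming from the fact that the new quadratic factor at the peak lies in $\{\sigma_2,\sigma_3,\tau_{12}\sigma_2\tau_{12}\}$ and its simple base-points are forced to be base-points of the new peak system; this reduces to the good case. This closed-chain conjugation, together with the verification that it preserves all intermediate degrees, is the substantive idea missing from your plan; the rest matches the paper.
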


\begin{proof}
We can write $w(f_n)\cdots w(f_1)=w(\alpha_{m+1})w(g_m)w(\alpha_m)\cdots w(\alpha_2)w(g_1)w(\alpha_1)$ where $\alpha_i\in\Aut(\PP^2)\cap J$ and $g_i\in\left(\Aut(\FF_0)\cup\Aut(\FF_2)\right)\cap J\setminus\Aut(\PP^2)$ as follows: We put $g_j:=f_i$ if $f_i$ is quadratic,  $\alpha_j:=f_i$ if $f_i$ is linear. Then we proceed by putting $\alpha_j:=\alpha_{i+1}\alpha_i$ ($w(\alpha_j)=w(\alpha_{i+1}\alpha_i)$ by Lemma~\ref{lem degree 1,2}). Proceeding like this we will reach a word where no two consecutive letters both have linear image in $\Bir(\PP^2)$. We then insert $\alpha_j=\Id$ between any two consecutive letters whose both image in $\Bir(\PP^2)$ is quadratic.

We denote by $\Delta_0$ the linear system of lines in $\PP^2$ and define for $i=1,\dots,m$
\[\Delta_i:=(\alpha_ig_{i-1}\cdots g_1\alpha_1)(\Delta_0)\]
which is the linear system of the map $(\alpha_ig_{i-1}\cdots g_1\alpha_1)^{-1}$. We define $d_i:=\deg(\Delta_i)$, which is also the degree of the map $(\alpha_ig_{i-1}\cdots g_1\alpha_1)^{-1}$. Furthermore, we define
\[D:=\max\{d_i\mid i=1,\dots,m\},\qquad N:=\max\{i\mid d_i=D\}\]
If $D=1$, it follows that $m=1$ and $\alpha_1=\Id$.  We can therefore assume that $D>1$ and prove the result by induction over the lexicographically ordered pair $(D,N)$. 

The induction step consists of finding $\tilde{\alpha}_{k+1},\dots,\tilde{\alpha}_1\in\Aut(\PP^2)\cap J$ and $\tilde{g}_1,\dots,\tilde{g}_k\in\left(\Aut(\FF_0)\cup\Aut(\FF_2)\right)\cap J\setminus\Aut(\PP^2)$ such that 
$$w(g_{N+1})w(\alpha_{N+1})w(g_N)=w(\tilde{\alpha}_{k+1})w(\tilde{g}_k)\cdots w(\tilde{g}_1)w(\tilde{\alpha}_{1})$$
and such that the pair $(\tilde{D},\tilde{N})$ associated to the product 
$$\alpha_{m+1}g_m\cdots g_{N+2}(\alpha_{N+2}\tilde{\alpha}_{k+1})\tilde{g}_k\cdots \tilde{g}_1(\tilde{\alpha}_1\alpha_N)g_{N-1}\cdots g_1\alpha_1$$ is strictly smaller than $(D,N)$. 

We look at three cases, depending on the degree of $g_{N+1}\alpha_{N+1}g_N$, and if the degree is three, we look at two sub cases, the "good case" and the "bad case".

If $\deg(g_{N+1}\alpha_{N+1}g_N)=1$, define $\Aut(\PP^2)\ni\tilde{\alpha}:=g_{N+1}\alpha_{N+1}g_N$. It follows from Lemma~\ref{lem degree 1,2}~(1) that $w(g_{N+1})w(\alpha_{N+1})w(g_N)=w(\tilde{\alpha})$ in $\mathfrak{G}$. We replace $g_{N+1}\alpha_{N+1}g_N$ by $\tilde{\alpha}$, which decreases $(D,N)$. 

If $\deg(g_{N+1}\alpha_{N+1}g_N)=2$, it follows from Lemma~\ref{lem degree 1,2}~(2),(3) that there exists $\tilde{\alpha},\tilde{\beta}\in\Aut(\PP^2)\cap J$ and $\tilde{g}\in(\Aut(\FF_0)\cup\Aut(\FF_2))\cap J\setminus\Aut(\PP^2)$ such that $w(g_{N+1})w(\alpha_{N+1})w(g_{N})=w(\tilde{\beta})w(\tilde{g})w(\tilde{\alpha})$. We replace $g_{N+1}\alpha_{N+1}g_N$ by $\tilde{\beta}\tilde{g}\tilde{\alpha}$, which decreases $(D,N)$. 
 
Finally, suppose that $\deg(g_{N+1}\alpha_{N+1}g_N)=3$. By definition of $N$, we have
\[d_{N-1}\leq D,\quad d_N=D,\quad d_{N+1}<D .\] 
"Good case": If $\Delta_{N}$ has a proper simple base-point, it follows from Lemma~\ref{lem deg 3 proper base-point} (with $\Delta=\Delta_{N-1}$) that there exist $\tilde{\alpha}_1,\dots,\tilde{\alpha}_{4}\in\Aut(\PP^2)\cap J$, $\tilde{g}_1,\tilde{g}_2,\tilde{g}_3\in(\Aut(\FF_0)\cup\Aut(\FF_2))\cap J$ such that 
\[w(g_{N+1})w(\alpha_{N+1})w(g_{N})=w(\tilde{\alpha}_{4})w(\tilde{g}_3)\cdots w(\tilde{\alpha}_2)w(\tilde{g}_1)w(\tilde{\alpha}_1)\ \text{in}\ \mathfrak{G}\]
and
\[\deg((\tilde{\alpha}_{i+1}\tilde{g}_i\cdots g_1\tilde{\alpha_1})(\Delta_{N-1}))<\deg(\Delta_{N})=D\]
for $i=1,\dots,4$. Replacing $g_{N+1}\alpha_{N+1}g_N$ by $\tilde{\alpha}_{4}\tilde{g}_3\cdots \tilde{g}_1\tilde{\alpha}_1$ decreases $(D,N)$.

"Bad case": Assume that $\Delta_N$ has no simple proper base-points. Without changing the pair $(D,N)$ we will replace the word $w(\alpha_{m+1})w(g_m)\cdots w(g_1)w(\alpha_1)$ in $\mathfrak{G}$ by an equivalent word $w(\hat{\alpha}_{m+1})w(\hat{g}_m)\cdots w(\hat{g})w(\hat{\alpha}_1)$ satisfying the "good case".

Choose two general points $p_0,q_0$ in $\PP^2$ and write $p_1=(\alpha_2g_1\alpha_1)(p_0)$, $q_1=(\alpha_2g_1\alpha_1)(q_0)$ and $p_i=(\alpha_{i+1}g_i)(p_{i-1})$, $q_i=(\alpha_{i+1}g_i)(q_{i-1})$ for $i=2,\dots,m$. Note that $p_m=p_0$ and $q_m=q_0$ because $\alpha_{m+1}g_m\cdots g_1\alpha_1=\Id$.

For $i=0,\dots,m$, we denote by $\beta_i\in\Aut(\PP^2)$ an element sending $[1:0:0]$, $[0:1:0]$, $[0:0:1]$ respectively onto $[1:0:0],p_i,q_i$ (this is possible, because we took $p_0,q_0$ general), and write $\tau_i:=\beta_i\sigma_3(\beta_i)^{-1}$, which is a quadratic de Jonquières involution having base-points $[1:0:0],p_i,q_i$. We choose $\beta_m=\beta_0$ and then have $\tau_m=\tau_0$. 

By Lemma~\ref{lem square dJ} the maps $\tau_1(\alpha_2g_1\alpha_1)\tau_0^{-1}$, $\tau_i(g_i\alpha_i)\tau_{i-1}^{-1}$ are quadratic de Jonquières and there exist $\gamma_i,\delta_i\in\Aut(\PP^2)\cap J$, $\hat{g}_i\in\{\sigma_2,\sigma_3,\tau_{12}\sigma_2\tau_{12}\}$ such that
\begin{align*}
w(\beta_1)w(\sigma_3)w(\beta_1^{-1})w(\alpha_2)w(g_1)w(\alpha_1)w(\beta_0)w(\sigma_3)w(\beta_0^{-1})=&w(\delta_1)w(\hat{g}_1)w(\gamma_1)\\
w(\beta_i)w(\sigma_3)w(\beta_i^{-1})w(\alpha_{i+1})w(g_i)w(\beta_{i-1})w(\sigma_3)w(\beta_{i-1}^{-1})=&w(\delta_i)w(\hat{g}_i)w(\gamma_i)
\end{align*}
 for $i=1,\dots,m$. We get the following diagram
\[\xymatrix{ \ar[r]^{\alpha_2g_1\alpha_1}\ar[d]_{\tau_0}&\ar[r]^{\alpha_3g_2}\ar[d]^{\tau_1}&\ar[d]^{\tau_2}&\cdots&&\ar[r]^{\alpha_{i+1}g_i}\ar[d]_{\tau_{i-1}}&\ar[d]^{\tau_i}&\cdots&&\ar[r]^{\alpha_{m+1}g_m}\ar[d]_{\tau_{m-1}}&\ar[d]^{\tau_m} \\ 
\ar[r]_{\delta_1\hat{g}_1\gamma_1}&\ar[r]_{\delta_2\hat{g}_2\gamma_2}&&\cdots&&\ar[r]_{\delta_i\hat{g}_i\gamma_i}&&\cdots&&\ar[r]_{\delta_m\hat{g}_m\gamma_m}&}\]
where each square in the diagram corresponds to a relation in $\mathfrak{G}$, making the whole diagram correspond to a relation in $\mathfrak{G}$. 
Therefore, writing $\tilde{\alpha}_i:=\delta_i\gamma_{i-1}$ for $i=2,\dots,m$, $\tilde{\alpha}_{m+1}:=\delta_m$, $\tilde{\alpha}_1:=\gamma_1$, the equality
\[w(\alpha_{m+1})w(g_m)\cdots w(g_1)w(\alpha_1)=w(\hat{\alpha}_{m+1})w(\hat{g}_m)w(\hat{\alpha}_m)\cdots w(\hat{\alpha}_2)w(\hat{g}_1)w(\hat{\alpha}_1)\] 
holds in $\mathfrak{G}$. We replace $\alpha_{m+1}g_m\cdots g_1\alpha_1$ by $\hat{\alpha}_{m+1}\hat{g}_m\hat{\alpha}_m\cdots\hat{\alpha}_2\hat{g}_1\hat{\alpha}_1$. 

For $i=1,\dots,m$, call $\hat{\Delta}_i:=(\hat{\alpha}_i\hat{g}_{i-1}\cdots \hat{g}_1\hat{\alpha}_1)(\Delta_0)$, which is the linear system of the map $(\hat{\alpha}_i\hat{g}_{i-1}\cdots\hat{g}_1\hat{\alpha}_1)^{-1}$, and denote by $\hat{d}_i$ its degree. Using Remark~\ref{deg dJ} we get $\deg(\hat{\alpha}_i\hat{g}_{i-1}\cdots\hat{g}_1\hat{\alpha}_1)=\deg(\alpha_ig_{i-1}\cdots g_1\alpha_1)$ for each $i$, thus $\hat{d}_{i}=d_i$ for $i=1,\dots,m$. Therefore, the replacement does not change the pair $(D,N)$, i.e. $(\hat{D},\hat{N})=(D,N)$. 
 
It remains to show that $\hat{\alpha}_{n+1}\hat{g}_n\cdots\hat{g}_1\hat{\alpha}_1$ satisfies the "good case", i.e. that $\hat{\Delta}_N$ has a simple proper base-point.

Since $d_{N+1}<D$, it follows from Remark~\ref{deg dJ} that $d_{N+1}=D-1$ and that all the base-points of $g_{N+1}$ are base-points of $\Delta_N$. Since the base-point of $\tau_N$ are general it follows from Remark~\ref{deg dJ} that each point in $(\tau_N)^{\bullet}(\Bp(g_N))$ is a base-point of $\hat{\Delta}_N$. Lemma~\ref{lem square dJ} states that $(\tau_N)^{\bullet}(\Bp(g_N))=\Bp(\hat{g}_N)$, and $\hat{g}_N\in\{\sigma_2,\sigma_3,\tau_{12}\sigma_2\tau_{12}\}$ has a simple proper base-point. Hence $\hat{\Delta}_N$ has a simple proper base-point.
\end{proof}

\begin{Cor}\label{cor dJ Id}
Let $\alpha_1,\dots,\alpha_n,\beta_1,\dots,\beta_m\in\Aut(\PP^2)\cup\Aut(\FF_0)\cup\Aut(\FF_2)$ de Jonquières such that $\alpha_n\cdots\alpha_1=\beta_m\cdots\beta_1$. Then 
\[w(\alpha_n)\cdots w(\alpha_1)=w(\beta_m)\cdots w(\beta_1).\] 
In particular there exists a homomorphism $w_J\colon J\rightarrow\mathfrak{G}$ which sends $\alpha_n\cdots\alpha_1 $ onto $w(\alpha_n)\cdots w(\alpha_1)$ and $\pi\circ w_J=\Id$, i.e. $w_J$ is injective. 
\end{Cor}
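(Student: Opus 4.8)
The plan is to obtain the corollary as a formal consequence of Lemma~\ref{lem dJ Id}. The first ingredient I would record is that the word map $w$ is compatible with inverses and with the identity in $\mathfrak{G}$: for $\beta$ lying in any one of the groups $\Aut(\PP^2)$, $\Aut(\FF_0)$, $\Aut(\FF_2)$, the triples $(\beta,\Id,\beta)$ and $(\beta,\beta^{-1},\Id)$ give rise to defining relations of $\mathfrak{G}$, and together these yield $w(\Id)=1$ and hence $w(\beta^{-1})=w(\beta)^{-1}$ in $\mathfrak{G}$. Moreover, since $J$ is a group, $\beta^{-1}$ is again a de Jonqui\`eres element of $\Aut(\PP^2)\cup\Aut(\FF_0)\cup\Aut(\FF_2)$, so it is an admissible input for Lemma~\ref{lem dJ Id}.

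Next, from $\alpha_n\cdots\alpha_1=\beta_m\cdots\beta_1$ I would form the composite $\beta_1^{-1}\cdots\beta_m^{-1}\alpha_n\cdots\alpha_1=\Id$ and apply Lemma~\ref{lem dJ Id} to the list $\alpha_1,\dots,\alpha_n,\beta_m^{-1},\dots,\beta_1^{-1}$. This yields $w(\beta_1^{-1})\cdots w(\beta_m^{-1})\,w(\alpha_n)\cdots w(\alpha_1)=\Id$ in $\mathfrak{G}$; using $w(\beta_j^{-1})=w(\beta_j)^{-1}$ and moving the $\beta$-part to the other side gives precisely $w(\alpha_n)\cdots w(\alpha_1)=w(\beta_m)\cdots w(\beta_1)$.

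For the remaining assertion I would invoke Lemma~\ref{lem dJ} (together with the classical fact quoted there that every element of $J$ is a composition of quadratic de Jonqui\`eres transformations) to write an arbitrary $f\in J$ as $f=\alpha_n\cdots\alpha_1$ with each $\alpha_i\in(\Aut(\PP^2)\cup\Aut(\FF_0)\cup\Aut(\FF_2))\cap J$, and define $w_J(f):=w(\alpha_n)\cdots w(\alpha_1)$. The equality just proved is exactly the statement that this value is independent of the chosen factorisation, so $w_J$ is well defined; concatenating factorisations of $f$ and $g$ shows $w_J(fg)=w_J(f)\,w_J(g)$, so $w_J$ is a homomorphism. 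Finally, $\pi\circ w=\Id$ (Remark~\ref{rmk w}) gives $\pi(w_J(f))=\alpha_n\cdots\alpha_1=f$, i.e. $\pi\circ w_J=\Id$, and the existence of a left inverse forces $w_J$ to be injective.

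I do not expect a genuine obstacle in this argument: all the geometric content has been absorbed into Lemma~\ref{lem dJ Id}. The only steps demanding a little attention are the compatibility of $w$ with inverses and the observation that well-definedness of $w_J$ is literally the first assertion of the corollary; both amount to routine bookkeeping.
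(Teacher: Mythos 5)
Your proposal is correct and follows exactly the paper's route: the paper's own proof is the single sentence ``apply Lemma~\ref{lem dJ Id} to $\beta_1^{-1}\cdots\beta_m^{-1}\alpha_n\cdots\alpha_1$,'' and your argument is precisely this, with the routine bookkeeping (compatibility of $w$ with inverses, well-definedness and multiplicativity of $w_J$, injectivity from $\pi\circ w_J=\Id$) spelled out.
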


\begin{proof}
The claim follows from applying Lemma~\ref{lem dJ Id} to $\beta_1^{-1}\cdots\beta_m^{-1}\alpha_n\cdots\alpha_1$. 
\end{proof}

\begin{Prop}[\cite{Bla12}]\label{bla}
The group $\Bir(\PP^2)$ is isomorphic to $$\left(\Aut(\PP^2)\ast_{\Aut(\PP^2)\cap J}J\right)/\langle\tau_{12}\sigma_3\tau_{12}\sigma_3\rangle,$$ 
the amalgamated product of $\Aut(\PP^2)$ and $J$ along their intersection and divided by the relation $\tau_{12}\sigma_3=\sigma_3\tau_{12}$, where $\tau_{12}([x:y:z])=[y:x:z]$ .
\end{Prop}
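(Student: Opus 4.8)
The plan is to identify $\Bir(\PP^2)$ with the amalgamated product by exhibiting the universal property: both groups are generated by $\Aut(\PP^2)$ and $J$, so I need to show that the only relations in $\Bir(\PP^2)$ among these subgroups are consequences of the internal relations of $\Aut(\PP^2)$, of $J$, of the identification on $\Aut(\PP^2)\cap J$, and of the single extra relation $\tau_{12}\sigma_3\tau_{12}\sigma_3$. Concretely, write $G=\left(\Aut(\PP^2)\ast_{\Aut(\PP^2)\cap J}J\right)/\langle\tau_{12}\sigma_3\tau_{12}\sigma_3\rangle$; there is an obvious surjective homomorphism $\pi\colon G\to\Bir(\PP^2)$ coming from the inclusions and the fact (Remark~\ref{rmk nc}, Noether--Castelnuovo) that $\Aut(\PP^2)$ and $J$ generate $\Bir(\PP^2)$, together with the observation that $\tau_{12}\sigma_3\tau_{12}\sigma_3=\Id$ in $\Bir(\PP^2)$. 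The task is to prove $\pi$ is injective.

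First I would reduce injectivity of $\pi$ to a statement about words: an element of the amalgamated product is killed by $\pi$ iff it can be written as an alternating product $g_n\cdots g_1$ with $g_i$ alternately in $\Aut(\PP^2)\setminus J$ and $J\setminus\Aut(\PP^2)$, representing the identity of $\Bir(\PP^2)$, yet nontrivial in $G$. So it suffices to show: whenever $g_n\cdots g_1=\Id$ in $\Bir(\PP^2)$ with each $g_i\in\Aut(\PP^2)\cup J$, the product $g_n\cdots g_1$ is already trivial in $G$. The engine for this is a decomposition result for elements of $J$ into quadratic de Jonqui\`eres maps and linear maps (the paper's Lemma~\ref{lem dJ} and the cited \cite[Theorem~8.4.3]{Alb02}), combined with the key amalgamation fact about $\mathfrak{G}$: by Corollary~\ref{cor dJ Id} the word map $w_J\colon J\to\mathfrak{G}$ is a well-defined homomorphism with $\pi_{\mathfrak{G}}\circ w_J=\Id$. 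This already tells us that any relation that lives entirely inside $J$ — that is, any way of writing an element of $J$ as a product of generators from $\Aut(\PP^2)\cup\Aut(\FF_0)\cup\Aut(\FF_2)$ all lying in $J$ — descends consistently to $\mathfrak{G}$, hence also to $G$ once we check the map $\mathfrak{G}\to G$ behaves correctly on the generators of $J$.

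The main step, then, is to compare the two relevant ``big'' groups: on one side $\mathfrak{G}$, the generalised amalgam of $\Aut(\PP^2)$, $\Aut(\FF_0)$, $\Aut(\FF_2)$ modulo $\tau_{13}\sigma_3\tau_{13}\sigma_3$, and on the other side $G=\left(\Aut(\PP^2)\ast_{\Aut(\PP^2)\cap J}J\right)/\langle\tau_{12}\sigma_3\tau_{12}\sigma_3\rangle$. I would construct mutually inverse homomorphisms between $\mathfrak{G}$ and $G$. From $G$ to $\mathfrak{G}$: the inclusion $\Aut(\PP^2)\hookrightarrow\mathfrak{G}$ is built in, and $w_J\colon J\to\mathfrak{G}$ from Corollary~\ref{cor dJ Id} is a homomorphism; these agree on $\Aut(\PP^2)\cap J$ (since $w_J$ restricted there is just $w$), so by the universal property of the amalgamated product they induce $G\to\mathfrak{G}$, provided the relation $\tau_{12}\sigma_3\tau_{12}\sigma_3$ dies in $\mathfrak{G}$ — and indeed $\tau_{12}\in\Aut(\FF_0)$ with $\tau_{12}\sigma_3\tau_{12}=\sigma_3$ a relation inside $\Aut(\FF_0)$ (used already in Lemma~\ref{lem permutations}). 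From $\mathfrak{G}$ to $G$: $\Aut(\PP^2)$, $\Aut(\FF_0)$, $\Aut(\FF_2)$ all embed in $\Bir(\PP^2)$, and the latter two are generated by de Jonqui\`eres quadratic maps together with $\TAut(\PP^2)$-type automorphisms by Lemma~\ref{lem auto linear}; the issue is that $\Aut(\FF_0)$ is \emph{not} contained in $J$, only its identity component is (Remark~\ref{rmk dJ}), so one must handle the coset $\tau_{12}\Aut(\FF_0)^0$ separately, but $\tau_{12}\in\Aut(\PP^2)$ so this is only a matter of bookkeeping. The compatibility needed to get a homomorphism $\mathfrak{G}\to G$ is exactly that every defining relation of $\mathfrak{G}$ — the multiplication tables of the three automorphism groups and the extra relation $\tau_{13}\sigma_3\tau_{13}\sigma_3$ — holds in $G$; the multiplication tables hold because $\Aut(\FF_0)$ and $\Aut(\FF_2)$ map into $\Aut(\PP^2)\ast J$ compatibly (their elements are linear or quadratic de Jonqui\`eres, Remark~\ref{rmk nc}), and $\tau_{13}\sigma_3\tau_{13}\sigma_3$ is conjugate in $\Aut(\PP^2)$ to $\tau_{12}\sigma_3\tau_{12}\sigma_3$.

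Then one checks the two homomorphisms are mutually inverse on generators, giving $\mathfrak{G}\cong G$; combined with the already-known surjection $\pi_{\mathfrak{G}}\colon\mathfrak{G}\to\Bir(\PP^2)$ and the injectivity statement $\pi_{\mathfrak{G}}\circ w_J=\Id$ extended suitably, this yields $G\cong\Bir(\PP^2)$. The hard part is \emph{not} in this final packaging — that is formal once the pieces are in place — but in the lemmas already proved in Sections~3--4, especially Lemma~\ref{lem dJ Id}, whose inductive argument on the pair $(D,N)$ (maximal intermediate degree, and last index achieving it) is what genuinely controls relations; the delicate ``bad case'' there, where an intermediate linear system $\Delta_N$ has no proper simple base-point and one must conjugate by general quadratic involutions $\tau_i$ to create one, is the real obstacle, and it has been dispatched. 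What remains in the proof of Proposition~\ref{bla} — well, here one simply \emph{cites} \cite{Bla12}, so strictly there is nothing to prove: the statement is that $\Bir(\PP^2)$ is the amalgamated product of $\Aut(\PP^2)$ and $J$ modulo $\tau_{12}\sigma_3\tau_{12}\sigma_3$, which is \cite[Theorem/Th\'eor\`eme]{Bla12}, and the only thing to verify is that the relation used there is exactly $\tau_{12}\sigma_3=\sigma_3\tau_{12}$ — a direct computation with the quadratic involutions, since $\tau_{12}\sigma_3\tau_{12}$ and $\sigma_3$ both send $[x:y:z]$ through the same three base-points $[1:0:0],[0:1:0],[0:0:1]$ and agree as birational maps.
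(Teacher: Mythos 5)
The paper offers no proof of Proposition~\ref{bla}: it is imported verbatim from \cite{Bla12}, and your closing observation --- that one simply cites that reference, the only thing to check being that the relation used there is $\tau_{12}\sigma_3\tau_{12}\sigma_3=\Id$, which holds since $\sigma_3$ is symmetric in the coordinates --- is exactly the paper's stance. Had you stopped there, the answer would be fine.

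The independent argument occupying the bulk of your proposal, however, is circular and should not be presented as a proof of this statement. You propose to establish $G:=\left(\Aut(\PP^2)\ast_{\Aut(\PP^2)\cap J}J\right)/\langle\tau_{12}\sigma_3\tau_{12}\sigma_3\rangle\cong\mathfrak{G}$ and then conclude $G\cong\Bir(\PP^2)$ from ``the already-known surjection $\pi_{\mathfrak{G}}$ and $\pi_{\mathfrak{G}}\circ w_J=\Id$ extended suitably.'' But $\pi_{\mathfrak{G}}\circ w_J=\Id$ only shows $\pi_{\mathfrak{G}}$ is injective on the image of $w_J$; injectivity of $\pi_{\mathfrak{G}}$ on all of $\mathfrak{G}$ is precisely Theorem~\ref{amalg}, and the paper deduces Theorem~\ref{amalg} \emph{from} Proposition~\ref{bla} via the universal property of the amalgam. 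The genuine content of \cite{Bla12} is the injectivity of the natural surjection $G\twoheadrightarrow\Bir(\PP^2)$, i.e.\ that every relation among elements of $\Aut(\PP^2)\cup J$ holding in $\Bir(\PP^2)$ is a consequence of the internal relations and the single commutation relation; nothing in Sections~3--4 of the paper (nor in your sketch) supplies that, since Lemma~\ref{lem dJ Id} and Corollary~\ref{cor dJ Id} only control relations among de Jonqui\`eres generators inside $\mathfrak{G}$, not arbitrary alternating words in $\Aut(\PP^2)$ and $J$. So either cite \cite{Bla12} and stop, or reproduce its argument (a study of how an arbitrary decomposition of the identity into linear and de Jonqui\`eres factors can be reduced using linear systems); the hybrid route you sketch proves nothing new about Proposition~\ref{bla} itself.
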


\begin{Rmk}
In $\Bir(\PP^2)$, the three relations 
\begin{enumerate}
\item\label{rel1}$\tau_{12}\sigma_3\tau_{12}\sigma_3=\Id$ 
\item\label{rel2}$\tau_{13}\sigma_3\tau_{13}\sigma_3=\Id$
\item\label{rel3}$\tau_{23}\sigma_3\tau_{23}\sigma_3=\Id$ 
\end{enumerate}
hold. Choosing two of them, the remaining relation of the three is generated by the chosen two. Relation~\ref{rel3} is a relation holding in $J$. Thus it suffices to impose relation~\ref{rel1} or \ref{rel2} in Theorem~\ref{bla}. However, since $\tau_{12},\sigma_3\in\Aut(\FF_0)$, relation~\ref{rel1} holds in $\Aut(\FF_0)$, so in particular it holds in the generalised amalgamated product of $\Aut(\PP^2),\Aut(\FF_0),\Aut(\FF_2)$ along all their pairwise intersections. 

It is not clear at all whether $J$ embeds into the generalised amalgamated product of $\Aut(\PP^2),\Aut(\FF_0),\Aut(\FF_2)$ along all their pairwise intersections, so it is à priori not clear whether one of the relations~\ref{rel2}, \ref{rel3} holds in there. Thus we need to impose one of them. 
\end{Rmk}

\begin{Thm}[(Theorem B)]\label{amalg} 
The group $\Bir(\PP^2)$ is isomorphic to $\mathfrak{G}$, the generalised amalgamated product of $\Aut(\PP^2)$, $\Aut(\FF_0)$, $\Aut(\FF_2)$ along all the pairwise intersections modulo the relation $\tau_{13}\sigma_3\tau_{13}\sigma_3$ where $\tau_{13}([x:y:z])=[z:y:x]$. 
\end{Thm}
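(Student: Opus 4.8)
The plan is to show that the canonical surjection $\pi\colon\mathfrak{G}\to\Bir(\PP^2)$ from Remark~\ref{rmk w} is injective, which by surjectivity gives the isomorphism. The strategy is to use the amalgamated product structure of $\Bir(\PP^2)$ over $\Aut(\PP^2)$ and $J$ from Theorem~\ref{bla} together with the section $w_J\colon J\to\mathfrak{G}$ constructed in Corollary~\ref{cor dJ Id}. First I would recall that since $\pi\circ w=\Id$ on $\Aut(\PP^2)\cup\Aut(\FF_0)\cup\Aut(\FF_2)$, the map $\pi$ is surjective by Remark~\ref{rmk nc}, so only injectivity remains. To prove injectivity it suffices to show that the relations defining $\Bir(\PP^2)$ all hold in $\mathfrak{G}$, i.e. to build a homomorphism $\Bir(\PP^2)\to\mathfrak{G}$ inverse to $\pi$.

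The concrete step is to invoke the universal property of the amalgamated product in Theorem~\ref{bla}: a homomorphism from $\left(\Aut(\PP^2)\ast_{\Aut(\PP^2)\cap J}J\right)/\langle\tau_{12}\sigma_3\tau_{12}\sigma_3\rangle$ into $\mathfrak{G}$ is given by a pair of homomorphisms from $\Aut(\PP^2)$ and from $J$ that agree on $\Aut(\PP^2)\cap J$ and that send the element $\tau_{12}\sigma_3\tau_{12}\sigma_3$ to the identity. On $\Aut(\PP^2)$ I would use the tautological word map $w$, which is a homomorphism on $\Aut(\PP^2)$ by the defining relations of $\mathfrak{G}$; on $J$ I would use $w_J$ from Corollary~\ref{cor dJ Id}. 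These agree on the intersection $\Aut(\PP^2)\cap J$ because for $\alpha\in\Aut(\PP^2)\cap J$ both maps send $\alpha$ to $w(\alpha)$ (the one-letter word). The relation $\tau_{12}\sigma_3\tau_{12}\sigma_3$ maps to $\Id$ in $\mathfrak{G}$ since $\tau_{12},\sigma_3\in\Aut(\FF_0)$ and this product is already trivial inside $\Aut(\FF_0)$, hence killed by the defining relations of $\mathfrak{G}$ (as noted in the Remark following Theorem~\ref{bla}). This yields a homomorphism $\Phi\colon\Bir(\PP^2)\to\mathfrak{G}$.

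It then remains to check that $\Phi$ is a two-sided inverse of $\pi$. The composition $\pi\circ\Phi$ is the identity on $\Bir(\PP^2)$ because it is the identity on the generating subgroups $\Aut(\PP^2)$ and $J$ (using $\pi\circ w=\Id$ and $\pi\circ w_J=\Id$). For the other composition, $\Phi\circ\pi$ is the identity on $\mathfrak{G}$ because $\mathfrak{G}$ is generated by $w(\Aut(\PP^2))$, $w(\Aut(\FF_0))$, $w(\Aut(\FF_2))$, and on each generator $w(g)$ one checks $\Phi(\pi(w(g)))=\Phi(g)=w(g)$: for $g\in\Aut(\PP^2)$ this is immediate, and for $g\in\Aut(\FF_0)\cup\Aut(\FF_2)$ one writes $g$ as a product of de Jonqui\`eres generators (using that $\Aut(\FF_2)\subset J$ and decomposing $\Aut(\FF_0)$ via $\tau_{12}$ into pieces inside $J$, cf.\ Remark~\ref{rmk dJ}) so that $\Phi(g)$ is computed through $w_J$ and Corollary~\ref{cor dJ Id} gives $\Phi(g)=w(g)$.

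The main obstacle in this argument is entirely hidden in the machinery already set up: the existence and well-definedness of $w_J$ (Corollary~\ref{cor dJ Id}), which rests on Lemma~\ref{lem dJ Id} and its delicate induction on the pair $(D,N)$ with the ``good case''/``bad case'' analysis of degree-three words. Once that is granted, the final theorem is a formal diagram chase through universal properties, and the only points needing care are the compatibility of $w$ and $w_J$ on $\Aut(\PP^2)\cap J$ and the fact that the generators of $\Aut(\FF_0)$ and $\Aut(\FF_2)$ outside $\Aut(\PP^2)$ do decompose into de Jonqui\`eres elements so that $\Phi$ restricted to them is controlled.
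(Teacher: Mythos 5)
Your proposal is correct and follows essentially the same route as the paper: both invoke the universal property of the amalgamated product from Theorem~\ref{bla}, glue $w$ on $\Aut(\PP^2)$ with $w_J$ from Corollary~\ref{cor dJ Id} on $J$, observe that $\tau_{12}\sigma_3\tau_{12}\sigma_3$ dies in $\mathfrak{G}$ because it already holds in $\Aut(\FF_0)$, and check the resulting map is inverse to $\pi$. Your explicit verification that $\Phi\circ\pi=\Id$ on generators of $\Aut(\FF_0)$ outside $J$ (via the $\tau_{12}$-decomposition) is a detail the paper leaves to ``by construction,'' but it is the same argument.
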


\begin{proof}
By Corollary~\ref{cor dJ Id} there exists $w_J\colon J\rightarrow\mathfrak{G}$ such that $\pi\circ w_J=\Id$, and $w$ and $w_J$ coincide on $\Aut(\PP^2)\cap J$. Thus the following diagram commutes
\[\xymatrix{ \mathfrak{G}&J\ar[l]_{w_J}\\ \Aut(\PP^2)\ar[u]^w&\Aut(\PP^2)\cap J\ar@{_{(}->}[l]_{\iota_1}\ar@{^{(}->}[u]^{\iota_2} }\]
where $\iota,\iota_1$ are the canonical inclusion maps. The universal property of the amalgamated product implies the existence of a unique homomorphism $\varphi:\Aut(\PP^2)\ast_{\Aut(\PP^2)\cap J}J\rightarrow\mathfrak{G}$ such that the following diagram commutes:
\[\xymatrix{\mathfrak{G}&&\\ &\Aut(\PP^2)\ast_{\Aut(\PP^2)\cap J}J\ar[ul]^{\exists !}_{\varphi}&J\ar[l]\ar@/_1pc/[ull]^{w_J}\\ &\Aut(\PP^2)\ar[u]\ar@/^1pc/[luu]^w&\Aut(\PP^2)\cap J\ar@{_{(}->}[l]_{\iota}\ar@{^{(}->}[u]^{\iota}}.\]
By Proposition~\ref{bla}, $\Bir(\PP^2)$ is isomorphic to $\Aut(\PP^2)\ast_{\Aut(\PP^2)\cap J}J$ modulo the relation $\sigma_3\tau_{12}=\tau_{12}\sigma_3$, where $\tau_{12}([x:y:z])=[y:x:z]$. Since $\tau_{12},\sigma_3\in\Aut(\FF_0)$, the relation $\sigma_3\tau_{12}=\tau_{12}\sigma_3$ also holds in $\Aut(\FF_0)$ and hence in $\mathfrak{G}$. Thus, the homomorphism $\varphi$ induces a homomorphism $\bar{\varphi}:(\Aut(\PP^2)\ast_{\Aut(\PP^2)\cap J}J)/\langle\sigma_3\tau_{12}\sigma_3\tau_{12}\rangle\longrightarrow\mathfrak{G}$. By construction, $\bar{\varphi}$ and the canonical homomorphism $\pi:\mathfrak{G}\rightarrow\Bir(\PP^2)$ are inverse to each other. 
\end{proof}

\section{The Cremona group is compactly presented}\label{5}

In this section, we restrict to case $k=\C$ and show that $\Bir(\PP^2)$ is compactly presented using Theorem~\ref{amalg} (Theorem B). 

Being compactly presented is a notion reserved for Hausdorff topological groups and we consider $\Bir(\PP^2)$ endowed with the Euclidean topology as constructed in \cite[Section~5]{BF13} which makes $\Bir(\PP^2)$ a Hausdorff topological group \cite[Theorem~3]{BF13}, which is not locally compact \cite[Lemma~5.15]{BF13}. 

\begin{Def}\label{def cp} Let $G$ be a group.
\begin{enumerate}
 \item A {\em presentation} $\langle S\mid R\rangle$  of $G$ is a triple made up of a set $S$, an epimorphism $\pi:F_S\twoheadrightarrow G$ of the free group on $S$ onto $G$, a subset $R$ of $F_S$ generating $\ker(\pi)$ as a normal subgroup. The {\em relations} of the presentation are the elements of $\ker(\pi)$ and the elements of $R$ the {\em relators} (or {\em generating relations}) of the presentation. 
\item A {\em bounded presentation} of $G$ is a presentation $\langle S\mid R\rangle$ of $G$ with $R$ a set of relators of bounded length.
\item Let $G$ be a Hausdorff topological group. A {\em compact presentation} of $G$ is a presentation $\langle S\mid R\rangle$ of $G$ with $S$ a compact subset of $G$ and $R$ a set of relators of bounded length. 
We say that $G$ is {\em compactly presented by} $S$ if $G$ is given by a compact presentation $\langle S\mid R\rangle$. We also say that $G$ is {\em compactly presented} if $G$ is compactly presented by some subset.
\end{enumerate}
\end{Def}

\begin{Lem}\label{cps}
\begin{enumerate}
\item\label{62} Let $G$ be a group and $S_1,S_2\subset G$ generating subsets. If $S_1^m\subset S_2^n\subset S_1^{m'}$ for some $m,n,m'\in\N$ then $G$ is boundedly presented by $S_1$ if and only if it is boundedly presented by $S_2$. 
\item\label{64} Any connected topological group is generated by any neighbourhood of $1$.
\item\label{61} If $G$ is a locally compact Hausdorff topological group having only finitely many connected components it is compactly presented.
\item\label{63} If $G$ is a locally compact Hausdorff topological group that is compactly presented then it is compactly presented by all its compact generating subsets.
\item\label{65} Let $G$ be a locally compact topological group with finitely many connected components $G_0,\dots,G_n$, where $1\in G_0$. For each $i$ choose some $g_i\in G$ such that $G_i=g_iG_0$. Then $G$ is generated by any compact neighbourhood of $1$ and $g_1,\dots,g_n$. In particular, it is compactly presented by any compact neighbourhood of $1$ and $g_1,\dots,g_n$.
\end{enumerate}
\end{Lem}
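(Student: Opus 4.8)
The five items are standard facts from the theory of compactly presented groups; I would treat them more or less independently, invoking the structure theory of locally compact groups and basic facts about amalgams and free groups. The only genuinely "hard" part is a bookkeeping argument for \ref{62}, and the only part that requires external structure theory is \ref{61}; the remaining items are short deductions.

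For \ref{62}, the idea is that a change of generating set from $S_1$ to $S_2$ with $S_1^m\subset S_2^n\subset S_1^{m'}$ gives a \emph{quasi-isometry} at the level of Cayley graphs, and bounded presentability is a quasi-isometry invariant among groups (it is equivalent to the Cayley $2$-complex being simply connected after coning off loops of bounded length, i.e. to a coarse simple-connectedness condition). Concretely I would argue as follows. Since $S_1^m\subset S_2^n$, every generator in $S_1$ is a word of length $\le n$ in $S_2^{\pm1}$; choose such words once and for all, giving a map $F_{S_1}\to F_{S_2}$. Similarly $S_2^n\subset S_1^{m'}$ gives a map back. One checks that if $\langle S_1\mid R_1\rangle$ is a bounded presentation then the images of the relators of $R_1$ together with the finitely-many ``triangle'' relations expressing $s = (\text{word in }S_2)$ and $s' = (\text{word in }S_1)$ and their compositions equalling the identity, all of bounded length because $m,n,m'$ are fixed, give a bounded presentation over $S_2$; and symmetrically. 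This is routine but slightly tedious; it is the step I expect to cost the most ink.

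For \ref{64}: in a topological group a neighbourhood $U$ of $1$ generates an open, hence closed, subgroup $\langle U\rangle=\bigcup_{n\ge1}(U\cup U^{-1})^n$; if $G$ is connected this subgroup must be all of $G$. For \ref{61}: by the Gleason--Yamabe / van Dantzig structure theory a locally compact group with finitely many connected components contains a compact open subgroup up to a Lie part; more directly, one invokes the classical result (e.g.\ Abels) that a compactly generated locally compact group which is ``coarsely simply connected'' is compactly presented, together with the fact that locally compact groups with finitely many components are compactly generated and of the required form — here I would simply cite the standard reference (the paper already cites \cite{Ab} for the analogous statement for connected Lie groups, and the general statement is due to Abels and appears in the literature on compact presentability of locally compact groups). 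For \ref{63}: given one compact presentation $\langle S_0\mid R_0\rangle$ and any other compact generating set $S$, local compactness gives that $S_0$ and $S$ are \emph{commensurable up to powers}, i.e.\ there are $m,n,m'$ with $S_0^{m}\subset S^{n}\subset S_0^{m'}$ (a compact set is covered by finitely many translates of the interior of a compact generating set, hence lies in a bounded power of it, and conversely); then \ref{62} transports bounded presentability from $S_0$ to $S$, and since both sets are compact this is a compact presentation.

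For \ref{65}: the finitely many cosets $g_1G_0,\dots,g_nG_0$ together with the identity component $G_0$ cover $G$, and $G_0$ is generated by any neighbourhood $V$ of $1$ in $G_0$ by \ref{64}; hence $V\cup\{g_1,\dots,g_n\}$ generates $G$. Taking $V=K\cap G_0$ for a compact neighbourhood $K$ of $1$ in $G$, the set $K\cup\{g_1,\dots,g_n\}$ is a compact generating set, and by \ref{61} $G$ is compactly presented, so by \ref{63} it is compactly presented by this particular compact generating set. This closes the proof.
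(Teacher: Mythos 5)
Your proposal is correct and follows essentially the same route as the paper, which simply cites \cite{CH14} and \cite{Cor} for item (1) and \cite{Ab} for item (3), and gives the same open-and-closed-subgroup argument for (2), the same Baire-category argument for (4), and the same assembly for (5). The only point to tighten is in (4): a compact generating set $S$ of a locally compact group need not have non-empty interior, so before covering a compact set by finitely many translates one must first use the Baire property to find a power $S^n$ with non-empty interior --- exactly the step the paper makes explicit.
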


\begin{proof}
\ref{62} is proved in \cite[Lemma~7.A.9]{CH14} and \cite[Lemma~2.6]{Cor} and \ref{61} in \cite[Satz~3.2]{Ab} (see also \cite[\S8.A]{CH14}).

\ref{64}: Let $U\subset G$ be an open neighbourhood of $1$. Then the subgroup $H$ of $G$ generated by $U$ is open because $H=\bigcup_{h\in H}hU$. It is also closed because $G\setminus H=\bigcup_{g\in G\setminus H}gH$, which is a open set.  

\ref{63}: If $G$ is compactly generated by a compact set $S$ and $K\subset G$ is a compact set then $K\subset S^n$ for some large $n$. 
This follows from the fact that any locally compact topological group is a Baire space and that $S$ is compact. The claim now follows from \ref{62}. 

\ref{65}: Let $K\subset G_0$ be a compact neighbourhood of $1$. By \ref{64}, $K$ generates $G_0$ and thus the compact set $K\cup\{g_1,\dots,g_n\}$ generates $G$. By \ref{61} and \ref{63} the locally compact group $G$ is compactly presented by $K\cup\{g_1,\dots,g_n\}$. 
\end{proof}

\begin{Rmk}\label{rmk zariski eucl topology}
Any irreducible algebraic variety over $\C$ is connected with respect to the Euclidean topology \cite[Chapter~XII, Proposition~2.4]{SGA1}. Any linear algebraic subgroup of $\Bir(\PP^2)$ has finitely many irreducible components in the Zariski-topology, which are exactly the cosets of the component containting $1$. Thus they are the connected components in the Zariski topology and hence also the connected components in the Euclidean topology. 

Furthermore, any linear algebraic subgroup of $\Bir(\PP^2)$ is a closed subset of $\Bir(\PP^2)_{\leq d}$ for some $d\in\N$ \cite[Lemma~2.19]{BF13}, which is a locally compact set \cite[Lemma~5.4]{BF13}. Hence any linear algebraic subgroup of $\Bir(\PP^2)$ is locally compact and therefore satisfies the conditions of Lemma~\ref{cps}~\ref{65}. 
\end{Rmk}

\begin{Rmk}\label{rmk grp}
Any algebraic subgroup of $\Bir(\PP^2)$ is a linear algebraic group \cite[Théorème~2]{Bla09}. The Euclidean topology on these groups is exactly the topology inherited from the Euclidean topology on $\Bir(\PP^2)$ \cite[Proposition~5.11]{BF13}.

The groups $\Aut(\PP^2)=\mathrm{PGL}_3(\C)$,  $\Aut(\FF_0)$ and $\Aut(\FF_2)$ are linear algebraic subgroups of $\Bir(\PP^2)$ (Lemma~\ref{cor auto linear}), and thus locally compact by Remark~\ref{rmk zariski eucl topology}.
\end{Rmk}

\begin{Cor}\label{cor cp}
\begin{enumerate}
\item The group $\Aut(\PP^2)$ is compactly presented by any compact neighbourhood of $1$. 
\item The group $\Aut(\FF_0)$ is compactly presented by the union of the linear map $\tau_{12}\colon[x:y:z]\rightarrow[y:x:z]$ and any compact neighbourhood of $1$.
\item The group $\Aut(\FF_2)$ is compactly presented by any compact neighbourhood of $1$.
\end{enumerate}
\end{Cor}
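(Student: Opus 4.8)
The plan is to deduce all three statements from Lemma~\ref{cps}~\ref{65} together with the structural facts about $\Aut(\PP^2)$, $\Aut(\FF_0)$, $\Aut(\FF_2)$ collected in Lemma~\ref{cor auto linear} and Remarks~\ref{rmk zariski eucl topology}, \ref{rmk grp}. The common thread is: each of these groups is a linear algebraic subgroup of $\Bir(\PP^2)$ (Lemma~\ref{cor auto linear}~\ref{41}), hence locally compact in the Euclidean topology (Remark~\ref{rmk grp}), and its connected components in the Euclidean topology coincide with its irreducible components in the Zariski topology, which are the cosets of the identity component (Remark~\ref{rmk zariski eucl topology}). So in each case it remains only to record how many components there are and to pick one explicit coset representative for each non-identity component; then Lemma~\ref{cps}~\ref{65} gives compact presentability by any compact neighbourhood of $1$ together with those representatives.

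\medskip

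\textbf{Step 1: $\Aut(\PP^2)$.} The group $\Aut(\PP^2)=\mathrm{PGL}_3(\C)$ is a connected linear algebraic group, so it has a single component; by Remark~\ref{rmk zariski eucl topology} it is connected in the Euclidean topology as well, and it is locally compact by Remark~\ref{rmk grp}. Applying Lemma~\ref{cps}~\ref{65} with $n=0$ (no extra generators needed) yields that $\Aut(\PP^2)$ is compactly presented by any compact neighbourhood of $1$.

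\medskip

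\textbf{Step 2: $\Aut(\FF_0)$.} By Lemma~\ref{cor auto linear}~\ref{42} the group $\Aut(\FF_0)$ has exactly two irreducible components, $\Aut(\FF_0)^0$ and $\tau_{12}\Aut(\FF_0)^0$ with $\tau_{12}\colon[x:y:z]\mapsto[y:x:z]$; by Remark~\ref{rmk zariski eucl topology} these are also its two Euclidean connected components, the first containing $1$. Since $\Aut(\FF_0)$ is locally compact (Remark~\ref{rmk grp}), Lemma~\ref{cps}~\ref{65} applies with the single coset representative $g_1=\tau_{12}$, giving that $\Aut(\FF_0)$ is compactly presented by $\{\tau_{12}\}$ together with any compact neighbourhood of $1$.

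\medskip

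\textbf{Step 3: $\Aut(\FF_2)$.} By Lemma~\ref{cor auto linear}~\ref{43} the group $\Aut(\FF_2)$ is irreducible, hence connected in the Euclidean topology (Remark~\ref{rmk zariski eucl topology}) and locally compact (Remark~\ref{rmk grp}); as in Step~1, Lemma~\ref{cps}~\ref{65} with $n=0$ shows it is compactly presented by any compact neighbourhood of $1$. I do not expect any genuine obstacle here: the entire content has been front-loaded into Lemma~\ref{cps} and into the algebraicity and component-count statements of Lemma~\ref{cor auto linear}, so the proof is essentially an application of Lemma~\ref{cps}~\ref{65} three times. The only point requiring a word of care is the identification of Zariski components with Euclidean components, which is exactly Remark~\ref{rmk zariski eucl topology} and relies on the fact that an irreducible complex variety is Euclidean-connected.
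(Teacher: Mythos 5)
Your proposal is correct and follows essentially the same route as the paper: both reduce the three statements to Lemma~\ref{cps}~\ref{65} via local compactness (Remark~\ref{rmk grp}), the identification of Euclidean and Zariski components (Remark~\ref{rmk zariski eucl topology}), and the component counts of Lemma~\ref{cor auto linear}~\ref{42},~\ref{43}. No gaps.
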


\begin{proof}
The groups $\Aut(\PP^2),\Aut(\FF_0),\Aut(\FF_2)$ are linear algebraic groups and locally compact by Remark~\ref{rmk grp}.

The group $\Aut(\PP^2)=\mathrm{PGL}_3(k)$ is irreducible, hence connected (Remark~\ref{rmk zariski eucl topology}), and the group $\Aut(\FF_2)$ is connected by Lemma~\ref{cor auto linear}~\ref{43}. By Lemma~\ref{cor auto linear}~\ref{42}, the group $\Aut(\FF_0)$ has two connected components, namely $\Aut(\FF_0)^0$ containing the identity element and $\tau_{12}\Aut(\FF_0)^0$. The claim now follows from Remark~\ref{rmk zariski eucl topology} and Proposition~\ref{cps}~\ref{65}.
\end{proof}

Using Corollary~\ref{cor cp} and the fact that $\Bir(\PP^2)$ is isomorphic to the generalised amalgamated product of $\Aut(\PP^2)$, $\Aut(\FF_0)$, $\Aut(\FF_2)$ along their pairwise intersection divided by one relation (Theorem~\ref{amalg}) we prove that $\Bir(\PP^2)$ is compactly presentable. 

\begin{Lem}\label{amalg product generators}
Let  $G$ be a group, $n\ge 2$ be an integer, and $G_1,\dots,G_n\subset G$, be subgroups of $G$  such that the following hold:
\begin{enumerate}
\item
The group $G$ admits the presentation $G=\left\langle \bigcup\limits_{i=1}^n G_i \mid R_G\right\rangle$, where $R_G$ is the set of all relators of the form $ab=c$, where $a,b,c\in G_i$ for some $i\in \{1,\dots,n\}$. 
\item For $i=1,\dots,n$, there exists a presentation $\langle K_i\mid R_i\rangle$ of $G_i$ such that for any subset $I\subset \{1,\dots,n\}$ the set $\bigcap\limits_{i\in I} K_i$ generates $\bigcap\limits_{i\in I} G_i$.
\end{enumerate}
Then, $G$ admits the presentation $G=\left\langle \bigcup\limits_{i=1}^n K_i \mid  \bigcup\limits_{i=1}^n R_i \right\rangle$.
\end{Lem}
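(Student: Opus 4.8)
The plan is to show that the two presentations define isomorphic groups by producing mutually inverse homomorphisms, or more directly by a generators-and-relations manipulation. Let $F$ denote the free group on $\bigcup_{i=1}^n K_i$, let $N$ be the normal closure of $\bigcup_{i=1}^n R_i$ in $F$, and let $H := F/N$ be the group with the proposed presentation. Since each $K_i \subseteq G_i \subseteq G$, there is a natural map $F \to G$; because each $R_i$ consists of relators valid in $G_i$ (hence in $G$), this descends to a homomorphism $\varphi\colon H \to G$. The bulk of the work is constructing an inverse $\psi\colon G \to H$, for which I will use the presentation of $G$ from hypothesis~(1).

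First I would set up $\psi$ on generators. The generating set of $G$ in hypothesis~(1) is $\bigcup_{i=1}^n G_i$; so I must send each $g \in G_i$ to an element of $H$. Using the presentation $\langle K_i \mid R_i\rangle$ of $G_i$, pick for each $g \in G_i$ a word $w_i(g)$ in $K_i^{\pm 1}$ representing it, and let its image in $H$ be the candidate value. The first thing to check is that this is well-defined: if $g$ lies in $G_i \cap G_j$, the words $w_i(g)$ and $w_j(g)$ (over $K_i$ and $K_j$ respectively) must map to the same element of $H$. This is exactly where hypothesis~(2) enters — $K_i \cap K_j$ generates $G_i \cap G_j$, so $g$ has a word $u$ over $K_i\cap K_j$; then $w_i(g)u^{-1}$ is a relator in $G_i$, hence in the normal closure of $R_i$, hence trivial in $H$, and likewise $w_j(g)u^{-1}$ is trivial in $H$ via $R_j$. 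Therefore $w_i(g)$ and $w_j(g)$ agree in $H$, and $\psi$ is well-defined on the generating set $\bigcup_i G_i$. (One should similarly remark that the value is independent of the chosen word $w_i(g)$, since any two such words over $K_i$ differ by a relator in $R_i$.)

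Next I would verify that $\psi$ respects the relators $R_G$ of $G$, i.e.\ that it extends to a homomorphism. A relator in $R_G$ has the form $ab = c$ with $a,b,c \in G_i$ for a single $i$. Then $\psi(a)\psi(b) = w_i(a)w_i(b)$ and $\psi(c) = w_i(c)$ as elements of $H$; but $w_i(a)w_i(b)w_i(c)^{-1}$ is a word over $K_i$ representing the identity of $G_i$, hence a relator in $R_i$, hence trivial in $H$. Thus $\psi\colon G \to H$ is a well-defined homomorphism. Finally, $\varphi$ and $\psi$ are mutually inverse: on the generators $K_i$ of $H$ one has $\psi(\varphi(k)) = \psi(k) = w_i(k)$, which equals $k$ in $H$ since $w_i(k)k^{-1}$ is a relator in $R_i$; and on the generators $G_i$ of $G$ one has $\varphi(\psi(g)) = \varphi(w_i(g))$, which is the image of $w_i(g)$ under the natural evaluation into $G$, namely $g$ itself. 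Hence $H \cong G$, giving the claimed presentation.

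The main obstacle is the well-definedness of $\psi$ on the overlaps $G_i \cap G_j$ (and more generally the consistent treatment of all the intersections), which is the only place the subtle hypothesis~(2) about $\bigcap_{i\in I}K_i$ generating $\bigcap_{i\in I}G_i$ is needed; everything else is a routine diagram chase with presentations. In fact, for the argument above one only uses the case $|I| \le 2$ of hypothesis~(2), so I would note that the full strength for larger $I$ is not required here (it is presumably included because in the application the groups $\Aut(\PP^2)$, $\Aut(\FF_0)$, $\Aut(\FF_2)$ share a common triple intersection whose behaviour must be controlled for the hypothesis~(1) presentation to be the right one — but that is the business of applying the lemma, not of proving it).
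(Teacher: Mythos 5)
Your proof is correct, and it takes a genuinely different route from the paper's. The paper works directly inside the free groups: it takes a word $w$ over $\bigcup_i K_i$ in the kernel of the evaluation map to $G$, uses hypothesis (1) to write $w$ in the free group on $\bigcup_i G_i$ as a product of conjugates of relators $abc^{-1}$ with $a,b,c\in G_{j}$, observes that every letter outside $\bigcup_i K_i$ must cancel in the reduced form and can therefore be replaced (consistently, for all its occurrences) by a word over the relevant $K_j$'s --- hypothesis (2) supplying such a word even when the letter lies in several of the $G_j$ --- and concludes that $w$ is a product of conjugates of words over single $K_j$'s that die in $G_j$, hence of conjugates of elements of $R_j$. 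You instead realise both presentations as groups and build mutually inverse homomorphisms via the universal properties, concentrating all the work in the well-definedness of $\psi$ on the overlaps $G_i\cap G_j$. Both are valid; your version avoids the slightly delicate substitution step (``a letter that disappears after reduction may be replaced by anything without changing the value''), and it makes visible that only the case $|I|\le 2$ of hypothesis (2) is used, whereas the paper's substitution must be the same word for every occurrence of a given $g$ and hence invokes $\bigcap_{i\in I}K_i$ for the full set $I=\{i : g\in G_i\}$. One small imprecision: when you say $w_i(a)w_i(b)w_i(c)^{-1}$ is ``a relator in $R_i$'', you mean it is a \emph{relation} of the presentation $\langle K_i\mid R_i\rangle$, i.e.\ it lies in the normal closure of $R_i$ in the free group on $K_i$ (and likewise for $w_i(g)u^{-1}$); the conclusion you actually use, that these words are trivial in $H$, is correct.
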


\begin{proof}

Denote by $F_G$ the free group generated by $\bigcup\limits_{i=1}^n G_i$ and by $F_K$ the free group generated by $K=\bigcup\limits_{i=1}^n K_i$; we view $F_K$ as a subgroup of $F_G$.

The natural group homomorphism $\pi \colon F_K\to G$ is surjective, because $G$ is generated by  $\bigcup\limits_{i=1}^n G_i$ and each $G_i$ is generated by $K_i$. Moreover, each set of relators $R_i$ corresponds to a subset of $\ker(\pi)$. It remains to see that $\ker \pi$ is contained in the normal subgroup generated by $\bigcup\limits_{i=1}^nR_i$. 

We take an element in $\ker(\pi)$, which in $F_K$ is a word
$$w=s_1s_2\dots s_m$$
such that each $s_i$ belongs to $K$ and $s_1\cdots s_m=1$ in $G$. Because $G$ admits the presentation $G=\left\langle \bigcup\limits_{i=1}^n G_i \mid R_G\right\rangle$, we can write $w$ in $F_G$ as a product
$$w=a_1r_1a_1^{-1}a_2r_2a_2^{-1}\dots a_lr_la_l^{-1}$$
where all the $a_i,r_i$ are elements of  $F_G$ and $r_i\in R_G$, which means by definition of $R_G$ that $r_i=a_ib_ic_i$ for $a_i,b_i,c_i\in G_{j(i)}$, i.e. each $r_i$ is a word in elements of $G_{j(i)}$.

The word $w$ is equal in $F_G$ to a reduced word, whose letters are elements of $K$ because $w\in F_K$. Hence, each  $g=\bigcup\limits_{i=1}^n G_i\setminus K$ which appears in the word $a_1r_1a_1^{-1}\dots a_lr_la_l^{-1}$ disappears after the reduction. We can thus replace each occurrence of $g$ with any chosen element of $F_G$ and do not change the value of the word. We do this in the following way:  if $g\in G_i\setminus K_i$, we replace then $g$ with a word with letters in $K_i$, which belongs to $\pi^{-1}(\pi(g))$ (this is possible since $K_i$ generates $G_i$). If $g$ belongs to more than one of the $G_i$ we can moreover assume that the letters of the word also belong to these $K_i$, because of the second hypothesis.

After this replacement, we obtain an equality in $F_K$
$$s_1\dots s_m=b_1t_1b_1^{-1}b_2t_2b_2^{-1}\dots b_lt_lb_l^{-1},$$
where each $t_i$ is a word with letters in $K_{j(i)}$, such that $\pi(t_i)=1$. For each $i=1,\dots,n$ denote by $F_{K_i}$ the free group generated by $K_i$ and by $\pi_i:F_{K_i}\rightarrow G_i$ the natural group homomorphism onto $G_i$ whose kernel is generated by $R_i$. We consider $F_{K_i}$ as subgroup of $F_K$, which means that $\pi_i=\pi|_{F_{K_i}}$ and hence $\ker(\pi_i)=\ker(\pi)\cap F_{K_i}$. Therefore $\pi_i(t_i)=1$ and thus $t_i$ is a product of conjugates of $R_{j(i)}$. This yields the result.
\end{proof}

\begin{Cor}\label{cor cpt pres}
Let $K\subset\Aut(\PP^2)$, $K_0\subset\Aut(\FF_0)$, $K_2\subset\Aut(\FF_2)$ be compact neighbourhoods of $1$ in the respective groups. Then $\Bir(\PP^2)$ is compactly presented by $K\cup K_0\cup K_2\cup\{\tau_{12}\}$.
\end{Cor}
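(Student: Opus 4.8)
The plan is to apply Lemma~\ref{amalg product generators} with $n=3$, $G=\Bir(\PP^2)$, $G_1=\Aut(\PP^2)$, $G_2=\Aut(\FF_0)$, $G_3=\Aut(\FF_2)$, and then feed the resulting presentation into Lemma~\ref{cps}~\ref{62} to pass to a genuinely compact generating set. The first hypothesis of Lemma~\ref{amalg product generators} is exactly the content of Theorem~\ref{amalg} (Theorem B) combined with the definition of $\mathfrak{G}$: the relators of the form $ab=c$ with $a,b,c$ in one of the three groups, together with the single extra relator $\tau_{13}\sigma_3\tau_{13}\sigma_3$, present $\Bir(\PP^2)$. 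The extra relator is a word of length four in the generators, so it is harmless for the bounded/compact count; I would note that it can be absorbed by enlarging one of the $R_i$, or simply remark that Lemma~\ref{amalg product generators} applies verbatim to the presentation with $R_G$ replaced by $R_G\cup\{\tau_{13}\sigma_3\tau_{13}\sigma_3\}$ and carry the single relator along throughout.

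For the second hypothesis I would take $\widetilde K:=K\cup\{\tau_{12}\}\subset\Aut(\FF_0)$ (note $\tau_{12}\in\Aut(\PP^2)\cap\Aut(\FF_0)$, so $\tau_{12}$ is a legitimate element here), and more precisely choose the three local generating sets as follows: by Corollary~\ref{cor cp}, $\Aut(\PP^2)$ is compactly presented by any compact neighbourhood of $1$, $\Aut(\FF_2)$ likewise, and $\Aut(\FF_0)$ is compactly presented by $\tau_{12}$ together with any compact neighbourhood of $1$. I would then enlarge $K$, $K_0$, $K_2$ if necessary so that $K\supset K_0\cap\Aut(\PP^2)$-type intersections are handled — more concretely, replace each of $K$, $K_0$, $K_2$ by a common compact neighbourhood of $1$ inside the respective group that contains the (compact) neighbourhoods $K\cap\mathcal A_0$, $K\cap\mathcal A_2$, $K_0\cap K_2$ etc., using that a neighbourhood of $1$ in a subgroup of the form $G_i\cap G_j$ (itself a linear algebraic group by Remark~\ref{rmk grp}) is the restriction of a neighbourhood of $1$ in each $G_i$. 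This guarantees that for every $I\subset\{1,2,3\}$ the set $\bigcap_{i\in I}K_i$ is a neighbourhood of $1$ in the connected (for $\Aut(\PP^2)$, $\Aut(\FF_2)$, and all pairwise and triple intersections, which are connected algebraic groups) or finite-component group $\bigcap_{i\in I}G_i$ and hence generates it by Lemma~\ref{cps}~\ref{64},~\ref{65} — the one subtlety being $\Aut(\FF_0)$ itself, which has two components, but there we have thrown in $\tau_{12}$, and every intersection $\Aut(\FF_0)\cap\Aut(\PP^2)$, $\Aut(\FF_0)\cap\Aut(\FF_2)$ lies in $\Aut(\FF_0)^0$ (one checks $\tau_{12}\notin\Aut(\FF_2)$ and $\tau_{12}$ does not fix $[1{:}0{:}0]$, so it is not in $\mathcal A_0$ either), so those intersections are connected and generated by the corresponding compact neighbourhood alone.

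Granting the two hypotheses, Lemma~\ref{amalg product generators} yields a presentation of $\Bir(\PP^2)$ with generating set $K\cup(K_0\cup\{\tau_{12}\})\cup K_2$ and relators $R_1\cup R_2\cup R_3$ together with the one leftover relator $\tau_{13}\sigma_3\tau_{13}\sigma_3$; each $R_i$ consists of relators of bounded length by Corollary~\ref{cor cp} (a compact presentation of a locally compact group has relators of bounded length by definition), so this is a compact presentation by $K\cup K_0\cup K_2\cup\{\tau_{12}\}$, which is compact since a finite union of compact sets is compact. Since the statement allows arbitrary compact neighbourhoods $K,K_0,K_2$, the enlargement step above is swallowed by Lemma~\ref{cps}~\ref{62}: any two compact neighbourhoods of $1$ in the same (say $\sigma$-compact, locally compact) group satisfy $S_1^m\subset S_2^n\subset S_1^{m'}$, so switching from the enlarged neighbourhoods back to the given ones changes the presentation only up to bounded Tietze moves and preserves compact presentability.

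\textbf{Main obstacle.} The bookkeeping in the second hypothesis of Lemma~\ref{amalg product generators} is the delicate point: one must verify that the same compact set $K_i$ can simultaneously serve as a compact-presentation generating set for $G_i$ \emph{and} have the property that all its intersections $\bigcap_{i\in I}K_i$ generate $\bigcap_{i\in I}G_i$. This forces a careful choice — one cannot pick the three neighbourhoods independently — and it rests on the geometric facts, extracted from Lemma~\ref{lem auto linear} and Lemma~\ref{cor auto linear}, that all the pairwise intersections $\mathcal A_0$, $\mathcal A_2$, $\Aut(\FF_0)\cap\Aut(\FF_2)$ and the triple intersection are connected algebraic groups lying in the identity components, so that a neighbourhood of $1$ suffices to generate each of them, with the $\tau_{12}$ generator needed \emph{only} to reach the second component of $\Aut(\FF_0)$ and playing no role in the intersections.
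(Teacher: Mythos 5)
Your overall strategy is the same as the paper's: feed the compact presentations of $\Aut(\PP^2)$, $\Aut(\FF_0)$, $\Aut(\FF_2)$ from Corollary~\ref{cor cp} into Lemma~\ref{amalg product generators}, invoke Theorem~\ref{amalg} for the first hypothesis, and carry the single extra relator $\tau_{13}\sigma_3\tau_{13}\sigma_3$ along (which is indeed harmless for boundedness). You are in fact more explicit than the paper about verifying the second hypothesis of Lemma~\ref{amalg product generators}, which is the right instinct.

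However, one step in that verification is false. You claim that $\Aut(\FF_0)\cap\Aut(\PP^2)$ is contained in $\Aut(\FF_0)^0$, arguing that $\tau_{12}$ does not fix $[1:0:0]$ and hence is not in $\mathcal{A}_0$. But by Lemma~\ref{lem auto linear}~\ref{11}, $\mathcal{A}_0=\Aut(\FF_0)\cap\Aut(\PP^2)$ is the stabiliser of the \emph{set} $\{[1:0:0],[0:1:0]\}$, not of the point $[1:0:0]$ (you are conflating it with the pointwise stabiliser $\mathcal{A}$ of Remark~\ref{rmk dJ}); since $\tau_{12}$ swaps the two points, it lies in $\mathcal{A}_0$, while by Lemma~\ref{cor auto linear}~\ref{42} it lies in the non-identity component of $\Aut(\FF_0)$. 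Hence $\mathcal{A}_0$ has two connected components, a compact neighbourhood of $1$ in it generates only the identity component, and hypothesis (2) of Lemma~\ref{amalg product generators} fails for the pair $\{\Aut(\PP^2),\Aut(\FF_0)\}$ with your choice of $K$ alone as the generating set of $\Aut(\PP^2)$. The repair is easy and does not change the statement: take $K\cup\{\tau_{12}\}$ as the generating set of $\Aut(\PP^2)$ as well (still a compact presentation by Lemma~\ref{cps}~\ref{62}, since $\tau_{12}\in K^n$ for some $n$), so that $\tau_{12}$ lies in the intersection of the generating sets of $\Aut(\PP^2)$ and $\Aut(\FF_0)$ and all of $\mathcal{A}_0$ is generated; the union of the three generating sets is unchanged. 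With that correction your argument goes through and coincides with the paper's proof, which passes over this point in silence.
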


\begin{proof}
Lemma~\ref{amalg product generators} yields that the union of any compact generating sets of $\Aut(\PP^2)$, $\Aut(\FF_0)$, $\Aut(\FF_2)$ giving a compact presentation of the respective groups yields a compact presentation of $\mathfrak{G}$, the generalised amalgamated product of $\Aut(\PP^2)$, $\Aut(\FF_0)$, $\Aut(\FF_2)$ along their pairwise intersection divided by one relation. Such compact generating sets are given by Corollary~\ref{cor cp}: Any compact neighbourhood of 1 of the groups $\Aut(\PP^2)$ and $\Aut(\FF_2)$ respectively, and the union of $\tau_{12}$ and any compact neighbourhood of 1 in $\Aut(\FF_0)$. Since $\Bir(\PP^2)$ and $\mathfrak{G}$ are isomorphic (Theorem~\ref{amalg}), the claim follows. 
\end{proof}

\begin{Cor}\label{cor cpt pres 2}
Let $K\subset\Aut(\PP^2)$, $K_0\subset\Aut(\FF_0)$, $K_2\subset\Aut(\FF_2)$ be compact neighbourhoods of $1$ in the respective groups. Then $\Bir(\PP^2)$ is compactly presented by $K\cup K_0\cup K_2$.
\end{Cor}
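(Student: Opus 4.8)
The plan is to reduce Corollary~\ref{cor cpt pres 2} to Corollary~\ref{cor cpt pres} by showing that the extra generator $\tau_{12}$ is already contained in a bounded power of the generating set $K\cup K_0\cup K_2$. The key observation is that $\tau_{12}\in\Aut(\PP^1\times\PP^1)=\Aut(\FF_0)$, but it lies in the non-identity connected component $\tau_{12}\Aut(\FF_0)^0$ (Lemma~\ref{cor auto linear}~\ref{42}), so it is \emph{not} automatically in any power of a compact neighbourhood of $1$ in $\Aut(\FF_0)$. However, $\tau_{12}$ is also an element of $\Aut(\PP^2)=\mathrm{PGL}_3(\C)$, which is connected; hence by Lemma~\ref{cps}~\ref{64} (and local compactness, via Lemma~\ref{cps}~\ref{63}) any compact neighbourhood $K$ of $1$ in $\Aut(\PP^2)$ satisfies $\tau_{12}\in K^N$ for some $N\in\N$. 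Thus $\tau_{12}\in (K\cup K_0\cup K_2)^N$.

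Next I would invoke Lemma~\ref{cps}~\ref{62}: set $S_1:=K\cup K_0\cup K_2$ and $S_2:=K\cup K_0\cup K_2\cup\{\tau_{12}\}$. Then trivially $S_1\subset S_2$, and by the previous paragraph $S_2\subset S_1^{\max(1,N)}$, so $S_1^1\subset S_2^1\subset S_1^N$. Both sets generate $\Bir(\PP^2)$: $S_2$ does by Corollary~\ref{cor cpt pres}, and hence so does $S_1$ since $S_1$ generates everything $S_2$ generates (as $\tau_{12}$ is a word in $S_1$). Therefore Lemma~\ref{cps}~\ref{62} applies and tells us that $\Bir(\PP^2)$ is boundedly presented by $S_1$ if and only if it is boundedly presented by $S_2$.

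Finally I would assemble the conclusion: by Corollary~\ref{cor cpt pres}, $\Bir(\PP^2)$ is compactly presented by $S_2=K\cup K_0\cup K_2\cup\{\tau_{12}\}$, in particular it admits a bounded presentation with respect to $S_2$. By the equivalence just established, it admits a bounded presentation with respect to $S_1=K\cup K_0\cup K_2$. Since $S_1$ is a finite union of compact sets, it is compact, and a bounded presentation on a compact generating set is precisely a compact presentation (Definition~\ref{def cp}). Hence $\Bir(\PP^2)$ is compactly presented by $K\cup K_0\cup K_2$.

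The only subtle point — and the main thing to get right — is the justification that $\tau_{12}$ lies in a \emph{bounded} power of $K$: this uses that $\Aut(\PP^2)$ is connected and locally compact, so that $K$ generates it (Lemma~\ref{cps}~\ref{64}) and any element, being in the compactly generated locally compact group $\Aut(\PP^2)$, lies in $K^N$ for some finite $N$ by the Baire-category argument underlying Lemma~\ref{cps}~\ref{63}. Everything else is a formal application of Lemma~\ref{cps}~\ref{62}. I expect no real obstacle here; the content is entirely in the already-proved Corollary~\ref{cor cpt pres} and in the bookkeeping lemma~\ref{cps}.
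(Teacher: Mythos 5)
Your proposal is correct and follows essentially the same route as the paper: the paper also sets $S_1=K\cup K_0\cup K_2$, $S_2=S_1\cup\{\tau_{12}\}$, uses that $K$ generates the connected group $\Aut(\PP^2)$ to get $\tau_{12}\in K^n$, and concludes via Lemma~\ref{cps}~\ref{62} and Corollary~\ref{cor cpt pres}. Your extra remarks (that $\tau_{12}$ lies in the non-identity component of $\Aut(\FF_0)$, and the Baire-category justification) are harmless elaborations of the same argument.
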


\begin{proof}
We define $S_1:=K\cup K_0\cup K_2$ and $S_2:=K\cup K_0\cup K_2\cup\{\tau_{12}\}$. The set $S_2$ generates $\Bir(\PP^2)$ by Corollary~\ref{cor cpt pres}. The set $K$ generates $\Aut(\PP^2)$ (Corollary~\ref{cor cp}), hence there exists $n\in\N$ such that $\tau_{12}\in K^n$. It follows that also $S_2$ generates $\Bir(\PP^2)$ and moreover that $S_1\subset S_2\subset (S_1)^n$. The claim now follows from Lemma~\ref{cps}~\ref{62} and Corollary~\ref{cor cpt pres}. 
\end{proof}

Lemma~\ref{cps}~\ref{62} and Corollary~\ref{cor cpt pres 2} imply that to prove Theorem A (Corollary~\ref{cor cpt pres 3}) we only need to check that for any compact neighbourhood $K\subset\Aut(\PP^2)$ of $1$ there exist $K_i\subset\Aut(\FF_i)$, $i=0,2$, compact neighbourhoods of 1 and integers $m,m',n\in\N$ such that $(K\cup\{\sigma_3\})^m\subset(K\cup K_0\cup K_2)^n\subset(K\cup\{\sigma_3\})^{m'}$. 

\begin{Lem}\label{lem open set}
Let $K\subset\Aut(\PP^2)$ be a compact neighbourhood of $1$. Then there exists $N\in\N$ such that $(K\cup \{\sigma_3\})^N$ contains compact neighbourhoods of $1$ in $\Aut(\FF_i)$, for $i=0,2$.
\end{Lem}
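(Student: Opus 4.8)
The goal is to show that for any compact neighbourhood $K\subset\Aut(\PP^2)$ of $1$, some power $(K\cup\{\sigma_3\})^N$ contains a compact neighbourhood of $1$ in $\Aut(\FF_0)$ and one in $\Aut(\FF_2)$. The key observation is Lemma~\ref{cor auto linear}~\ref{44},\ref{45}: the sets $\mathcal{A}_0\sigma_3\mathcal{A}_0$ and $\mathcal{A}_2\sigma_2\mathcal{A}_2$ are Zariski-open in $\Aut(\FF_0)$, $\Aut(\FF_2)$ respectively, hence (by Remark~\ref{rmk zariski eucl topology}, irreducible varieties being Euclidean-connected, and the Zariski topology being coarser) they are Euclidean-open, nonempty subsets of the respective groups. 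So it suffices to produce, inside some $(K\cup\{\sigma_3\})^N$, a compact neighbourhood of a point of $\mathcal{A}_0\sigma_3\mathcal{A}_0$ and a compact neighbourhood of a point of $\mathcal{A}_2\sigma_2\mathcal{A}_2$; translating by a fixed group element of the same power-set (which only enlarges $N$) then moves these to neighbourhoods of $1$.

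The plan is as follows. First I would note $\sigma_3\in K\cup\{\sigma_3\}$ and $\mathcal{A}_0,\mathcal{A}_2\subset\Aut(\PP^2)$, and that since $K$ is a neighbourhood of $1$ in $\Aut(\PP^2)$, any element $\alpha\in\Aut(\PP^2)$ lies in $K^{m(\alpha)}$ for some $m(\alpha)$ (because $K$ generates the connected group $\Aut(\PP^2)$, Lemma~\ref{cps}~\ref{64}); moreover for a compact subset $C\subset\Aut(\PP^2)$ one can choose a single $m$ with $C\subset K^m$, using that $\Aut(\PP^2)$ is a Baire space (as in the proof of Lemma~\ref{cps}~\ref{63}). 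Now pick $\beta_0\in\mathcal{A}_0$ and $\beta_2\in\mathcal{A}_2$ with $\beta_2\sigma_2\beta_2^{-1}\in\mathcal{A}_2\sigma_2\mathcal{A}_2$; recall from the proof of Lemma~\ref{lem auto linear} that $\sigma_2\in\Aut(\FF_0)\cap\Aut(\FF_2)$, and $\sigma_3=\tau_{12}\sigma_2\tau_{12}\sigma_2$ with $\tau_{12}\in\Aut(\PP^2)$ (Remark~\ref{rmk nc}), so $\sigma_2$ itself lies in $(K\cup\{\sigma_3\})^{N_0}$ for some fixed $N_0$ (write $\sigma_2 = \tau_{12}^{-1}\sigma_3\tau_{12}^{-1}\sigma_3^{-1}\cdots$, absorbing each occurrence of $\tau_{12}^{\pm1}$ into a bounded power of $K$). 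Then take a compact neighbourhood $U_0$ of $1$ in $\Aut(\FF_0)$ inside the Euclidean-open set $\mathcal{A}_0\sigma_3\mathcal{A}_0$ after translating by $\sigma_3^{-1}$ (so $\sigma_3 U_0\subset\mathcal{A}_0\sigma_3\mathcal{A}_0$), and likewise $U_2$ a compact neighbourhood of $1$ in $\Aut(\FF_2)$ with $\sigma_2 U_2\subset \mathcal{A}_2\sigma_2\mathcal{A}_2$ (again using openness). Each element of $\sigma_3 U_0$ is of the form $a\sigma_3 b$ with $a,b$ ranging over a compact subset of $\mathcal{A}_0\subset\Aut(\PP^2)$, hence over some fixed $K^m$; so $\sigma_3 U_0\subset K^m\sigma_3 K^m\subset(K\cup\{\sigma_3\})^{2m+1}$, and therefore $U_0\subset \sigma_3^{-1}(K\cup\{\sigma_3\})^{2m+1}\subset(K\cup\{\sigma_3\})^{2m+2}$. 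The same argument with $\sigma_2$ replaced by its bounded expression in $K\cup\{\sigma_3\}$ gives $U_2\subset(K\cup\{\sigma_3\})^{N_2}$ for a fixed $N_2$. Taking $N:=\max\{2m+2,N_2\}$ finishes it.

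The one point requiring a little care — and the place I expect to spend the most words — is the passage "a compact subset of $\mathcal{A}_i$ is contained in a fixed power of $K$". This is not automatic from $K$ being a neighbourhood of $1$ in $\Aut(\PP^2)$ alone; it uses that $\Aut(\PP^2)=\mathrm{PGL}_3(\C)$ is a Baire space (being locally compact Hausdorff) so that $\bigcup_n \mathrm{int}(K^n)=\Aut(\PP^2)$, combined with compactness of the subset — exactly the mechanism already invoked in the proof of Lemma~\ref{cps}~\ref{63}. Everything else is bookkeeping: identifying $\mathcal{A}_0\sigma_3\mathcal{A}_0$ and $\mathcal{A}_2\sigma_2\mathcal{A}_2$ as Euclidean-open by Lemma~\ref{cor auto linear} and Remark~\ref{rmk zariski eucl topology}, and rewriting $\sigma_2$ in terms of $\sigma_3$ and a bounded number of $K$-factors.
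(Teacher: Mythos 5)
Your overall strategy is essentially the paper's: both arguments hinge on the Zariski-openness (hence Euclidean-openness) of $\mathcal{A}_0\sigma_3\mathcal{A}_0$ and $\mathcal{A}_2\sigma_2\mathcal{A}_2$ from Lemma~\ref{cor auto linear}, on bounding compact subsets of $\Aut(\PP^2)$ by a fixed power of $K$ via a Baire argument, and on the fact that $\sigma_2$ lies in some fixed power of $K\cup\{\sigma_3\}$. However, the step you treat as immediate is in fact the crux, and as written it is a genuine gap: you assert that every element of the compact set $\sigma_3U_0\subset\mathcal{A}_0\sigma_3\mathcal{A}_0$ can be written as $a\sigma_3 b$ with $a,b$ ranging over a \emph{compact} subset of $\mathcal{A}_0$. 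This does not follow from compactness of $\sigma_3U_0$ alone, because the fibres of the map $\mathcal{A}_0\times\mathcal{A}_0\to\Aut(\FF_0)$, $(a,b)\mapsto a\sigma_3 b$, are non-compact: for instance the fibre over $\sigma_3$ contains $\{(\delta,\delta)\mid\delta\in D\}$, since $\sigma_3\delta\sigma_3=\delta^{-1}$ for diagonal $\delta$. So a compact subset of the image need not lift to a bounded choice of parameters without further argument. This can be repaired, either by invoking local continuous sections of the orbit map of $\mathcal{A}_0\times\mathcal{A}_0$ acting by $(a,b)\cdot g=agb^{-1}$ (a submersion onto its orbit in characteristic $0$, so compact subsets of the orbit do lift), or --- as the paper does --- by running the argument in the opposite direction: write $\mathcal{A}_0\sigma_3\mathcal{A}_0=\bigcup_n(K_0)^n\sigma_3(K_0)^n$ with $K_0=K\cap\mathcal{A}_0$ as an increasing union of compact (hence closed) sets and apply the Baire category theorem to the locally compact set $\mathcal{A}_0\sigma_3\mathcal{A}_0$ to find an $n$ for which $(K_0)^n\sigma_3(K_0)^n$ already has non-empty interior in $\Aut(\FF_0)$; a translation inside a bounded power of $K\cup\{\sigma_3\}$ then yields the desired compact neighbourhood of $1$. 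The same remark applies to $\mathcal{A}_2\sigma_2\mathcal{A}_2$. You do invoke Baire, but only for the (easier) statement that a compact subset of $\Aut(\PP^2)$ lies in some $K^m$; the lifting step is where it is really needed.

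A secondary point: your proposed derivation of $\sigma_2$ as a word in $\tau_{12}$ and $\sigma_3$ from the identity $\sigma_3=\tau_{12}\sigma_2\tau_{12}\sigma_2$ cannot work, since that identity only says that the conjugate $\sigma_2\tau_{12}\sigma_2$ equals $\tau_{12}\sigma_3$ and does not isolate $\sigma_2$; in fact $\sigma_2$ does not lie in the subgroup generated by $\tau_{12}$ and $\sigma_3$ (in the monomial picture these correspond to $\pm$ the identity and the swap in $\GL_2(\Z)$, while $\sigma_2$ corresponds to $\mathrm{diag}(1,-1)$). The conclusion you need, $\sigma_2\in(K\cup\{\sigma_3\})^{m''}$ for some fixed $m''$, nevertheless holds: $K\cup\{\sigma_3\}$ generates $\Bir(\PP^2)$ by Noether--Castelnuovo, which is exactly how the paper obtains it.
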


\begin{proof}
Let $\mathcal{A}_0=\Aut(\FF_0)^0\cap\Aut(\PP^2)$ and $\mathcal{A}_2=\Aut(\FF_2)\cap\Aut(\PP^2)$, which are connected algebraic subgroups of $\Aut(\FF_0)$, $\Aut(\FF_2)$ respectively (Lemma~\ref{cor auto linear}). For $i=0,2$, the set $K_i=K\cap \mathcal{A}_i$ is a compact neighbourhood of $1$ in $\mathcal{A}_i$. Corollary~\ref{cor cp} implies that $\mathcal{A}_i=\bigcup_{n\in\N}(K_i)^n$, for $i=0,2$. It follows that 
\[\mathcal{A}_0\sigma_3\mathcal{A}_0=\bigcup_{n\in\N}(K_0)^n\sigma_3(K_0)^n,\qquad \mathcal{A}_2\sigma_2\mathcal{A}_2=\bigcup_{n\in\N}(K_2)^n\sigma_2(K_2)^n.\]
The sets $\mathcal{A}_0\sigma_3\mathcal{A}_0$ and $\mathcal{A}_2\sigma_2\mathcal{A}_2$ are Zariski-open subsets of $\Aut(\FF_0)$, $\Aut(\FF_2)$ respectively (Lemma~\ref{cor auto linear}), and are thus locally compact and hence Baire spaces. There exists then some $m\in \mathbb{N}$ such that  
 that $(K_0)^{m}\sigma_3(K_0)^{m}$ and $(K_2)^{m}\sigma_2(K_2)^{m}$ have non-empty interior in $\mathcal{A}_0\sigma_3\mathcal{A}_0$ and $\mathcal{A}_2\sigma_2\mathcal{A}_2$, and thus in $\Aut(\FF_0)$ and $\Aut(\FF_2)$ respectively.

Since $(K_i)^{m}\sigma_j(K_i)^{m}\subset(K_i\cup\{\sigma_j\})^{2m+1}$, the sets $(K_0\cup\{\sigma_3\})^{2m+1}$ and $(K_2\cup\{\sigma_2\})^{2m+1}$ also have non-empty interior in $\Aut(\FF_0)$, $\Aut(\FF_2)$ respectively. Since $(K_i)^{-1}\subset K_i^{m_i}$ for some big $m_i$ and $(\sigma_j)^{-1}=\sigma_j$, the sets $(K_0\cup\{\sigma_3\})^{m'}$ and $(K_2\cup\{\sigma_2\})^{m'}$ are neighbourhoods of $1$ in the corresponding groups for some $m'$ big enough. Since $\Bir(\PP^2)$ is generated by $K\cup \{\sigma_3\}$ (by the Noether-Castelnuovo theorem), we find $m''$ such that $\sigma_2\in (K\cup \{\sigma_3\})^{m''}$. A suitable power of $K\cup \{\sigma_3\}$ contains thus $(K_0\cup\{\sigma_3\})^{m'}$ and $(K_2\cup\{\sigma_2\})^{m'}$.
\end{proof}

\begin{Cor}[(Theorem A)]\label{cor cpt pres 3}
Let $K\subset\Aut(\PP^2)$ be a compact neighbourhood of $1$. Then $\Bir(\PP^2)$ is compactly presented by $K\cup\{\sigma_3\}$.
\end{Cor}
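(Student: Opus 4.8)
The plan is to derive Theorem~A from the already-proved Corollary~\ref{cor cpt pres 2}, which gives a compact presentation of $\Bir(\PP^2)$ by $K\cup K_0\cup K_2$ for \emph{any} compact neighbourhoods $K_0\subset\Aut(\FF_0)$, $K_2\subset\Aut(\FF_2)$ of $1$, by comparing the two generating sets $S_1:=K\cup\{\sigma_3\}$ and $S_2:=K\cup K_0\cup K_2$ via Lemma~\ref{cps}~\ref{62}. Since a compact presentation is nothing but a bounded presentation whose generating set happens to be compact, and $S_1=K\cup\{\sigma_3\}$ is visibly compact, it is enough to show that $\Bir(\PP^2)$ is \emph{boundedly} presented by $S_1$; by Lemma~\ref{cps}~\ref{62} this will follow once we find $K_0$, $K_2$ and integers $m,n,m'$ with $(S_1)^m\subset (S_2)^n\subset (S_1)^{m'}$ (both $S_1$ and $S_2$ being generating sets of $\Bir(\PP^2)$).

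For the inclusion $(S_2)^n\subset (S_1)^{m'}$ I would simply invoke Lemma~\ref{lem open set}: it produces $N\in\N$ and compact neighbourhoods $K_0\subset\Aut(\FF_0)$, $K_2\subset\Aut(\FF_2)$ of $1$ with $K_0\cup K_2\subset (K\cup\{\sigma_3\})^N$, whence $S_2\subset (S_1)^N$ and $(S_2)^n\subset (S_1)^{Nn}$ for all $n$; in particular $S_1$ generates $\Bir(\PP^2)$ because $S_2$ does. For the reverse inclusion $(S_1)^m\subset (S_2)^n$ it suffices to place $\sigma_3$ inside some power of $S_2$: one has $\sigma_3\in\Aut(\FF_0)^0$ (e.g.\ $\sigma_3\in\mathcal{A}\sigma_3\mathcal{A}\subset\Aut(\FF_0)^0$ by the explicit description of $\Aut(\FF_0)^0$ in Remark~\ref{rmk dJ}, or from the factorisation $\sigma_3=\tau_{12}\sigma_2\tau_{12}\sigma_2$ noted in Remark~\ref{rmk nc}, each factor lying in the identity component), and $K_0$, being a neighbourhood of $1$ in the connected group $\Aut(\FF_0)^0$, generates it by Lemma~\ref{cps}~\ref{64}; hence $\sigma_3\in (K_0)^m$ for some $m$, so $S_1\subset (S_2)^m$ and $(S_1)^\ell\subset (S_2)^{m\ell}$. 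Taking these together, $(S_1)^1\subset (S_2)^m\subset (S_1)^{Nm}$, Lemma~\ref{cps}~\ref{62} transports the bounded presentation of Corollary~\ref{cor cpt pres 2} from $S_2$ to $S_1$, and compactness of $S_1$ upgrades it to a compact presentation.

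The only point that needs care --- and the only real obstacle, modest as it is --- is that $\Aut(\FF_0)$ is \emph{not} connected, so a small neighbourhood of $1$ generates only its identity component $\Aut(\FF_0)^0$, not all of $\Aut(\FF_0)$; one must therefore check that $\sigma_3$ lands in $\Aut(\FF_0)^0$ rather than in the other component $\tau_{12}\Aut(\FF_0)^0$, which is exactly where the structural description of $\Aut(\FF_0)^0$ from Remark~\ref{rmk dJ} enters. All the remaining work is bookkeeping with powers of generating sets together with the two applications of Lemma~\ref{cps}.
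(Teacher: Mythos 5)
Your proposal is correct and follows essentially the same route as the paper: invoke Lemma~\ref{lem open set} to get $S_2=K\cup K_0\cup K_2\subset (S_1)^N$, observe that $\sigma_3\in\Aut(\FF_0)^0$ so that $\sigma_3\in (K_0)^M$ by connectedness (Lemma~\ref{cps}~\ref{64}), deduce $S_1\subset (S_2)^M\subset (S_1)^{MN}$, and transport the compact presentation of Corollary~\ref{cor cpt pres 2} via Lemma~\ref{cps}~\ref{62}. The point you single out as needing care --- that $\sigma_3$ must lie in the identity component of the disconnected group $\Aut(\FF_0)$ --- is exactly the point the paper's proof relies on as well.
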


\begin{proof}According to Lemma~\ref{lem open set}, there exists $N\in\N$ and compact neighbourhoods  $K_0,K_2$ of $1$ in $\Aut(\FF_0)$ and $\Aut(\FF_2)$ respectively, such that $(K\cup \{\sigma_3\})^N$ contains $K_0\cup K_2$.

We define $S_1:=K\cup\{\sigma_3\}$ and $S_2:=K\cup K_0 \cup K_2$. 
Because $\sigma_3\in \Aut(\FF_0)^0$ and $\Aut(\FF_0)^0$ is compactly generated by $K_0$ (Lemma~\ref{cps}~\ref{64}) there exists $M\in \mathbb{N}$ such that $\sigma_3\in(K_0)^M$. It follows that $S_1\subset (S_2)^M$. Since $S_2\subset (S_1)^N$, we have  $S_1\subset (S_2)^M\subset (S_1)^{MN}$. The group $\Bir(\PP^2)$ being compactly presented by $S_2$ (Corollary~\ref{cor cpt pres 2}), it is also compactly presented by $S_1$ (Lemma~\ref{cps}~\ref{62}).
\end{proof}

\vskip\baselineskip
\address{
   Susanna Zimmermann\\
   Mathematisches Institut\\ 
   Universit\"at Basel\\ Spiegelgasse 1\\ 4051 Basel, Switzerland}\\
   \email{susanna.zimmermann@unibas.ch}

\end{document}